\begin{document}


\markboth{K Liu et al.}{Convergence of AMIPDG methods for $\boldsymbol{H}(\boldsymbol{curl})$-elliptic problems}
\title{Convergence of Adaptive Mixed Interior Penalty Discontinuous Galerkin Methods for $\boldsymbol{H}(\boldsymbol{curl})$-Elliptic Problems}


\author[K. Liu, M. Tang, X. Q. Xing, L. Q. Zhong]{K. Liu\affil{1}, M. Tang\affil{2}\comma\corrauth, X. Q. Xing\affil{2} and L. Q. Zhong\affil{2} }
\address{\affilnum{1}\ School of Sciece, East China University of Technology, Nanchang, 330013, China\\
	\affilnum{2}\  School of Mathematical Sciences, South China Normal University, Guangzhou, 510631, China}

\emails{{\tt liukai@ecut.edu.cn} (K. Liu), {\tt mingtang@m.scnu.edu.cn} (M. Tang),{\tt xingxq@scnu.edu.cn}(X. Q. Xing), {\tt zhong@m.scnu.edu.cn} (L. Q. Zhong)}

%
%
%

\begin{abstract}
In this paper, we study  the convergence of adaptive mixed interior penalty  discontinuous Galerkin method  for $\boldsymbol{H}(\boldsymbol{curl})$-elliptic problems. We first get the  mixed model of $\boldsymbol{H}(\boldsymbol{curl})$-elliptic problem by introducing a new intermediate variable. Then we discuss the continuous variational problem and  discrete variational problem, which based on interior penalty discontinuous Galerkin approximation. Next, we construct the corresponding posteriori error indicator, and prove the contraction of the summation of the energy error and the scaled error indicator. At last, we confirm and illustrate the theoretical result through some numerical experiments.
\end{abstract}

\ams{65M15,65N12,65N30}

\keywords{Adaptive mixed interior penalty  discontinuous  Galerkin methods, Convergence, $\boldsymbol{H}(\boldsymbol{curl})$-elliptic problems.}

\maketitle


\section{Introduction}

Let $\Omega\subset{\mathbb{R}}^3$ be Lipschitz bounded polygonal domain with a single connected boundary $ \partial\Omega$. We consider the following $\boldsymbol{H}(\boldsymbol{curl})$-elliptic problem
\begin{eqnarray}\label{Equ:1.1}
\nabla\times\mu\nabla\times \boldsymbol{u}
+
\kappa\boldsymbol{u}=\boldsymbol{f} \quad  &\text{in}& \quad \Omega, \\\label{Equ:1.2}
\boldsymbol{u}\times \boldsymbol{n}=0  \quad &\text{on}& \quad \partial\Omega,
\end{eqnarray}
where $\boldsymbol{n}$  is the unit normal vector of the boundary $ \partial\Omega$, $ \boldsymbol{f}\in \boldsymbol{L}^2(\Omega)$, $\mu$ and $\kappa$ are piecewise constants is consistent with the initial partition $\mathcal{T}_0$ for $\Omega$ and satisfy $\mu_1< \mu <\mu_2$ and  $\kappa_1<\kappa<\kappa_2$, here, $\mu_i$ and $\kappa_i  (i=1, 2)$ are positive constants. By introducing an auxiliary variable $ \boldsymbol{p}=\mu \nabla\times \boldsymbol{u}$, then  we get the  mixed scheme with the boundary value problem  \eqref{Equ:1.1}-\eqref{Equ:1.2}
\begin{eqnarray}\label{Equ:1.3.0}
\boldsymbol{p}=\mu \nabla\times \boldsymbol{u}\quad &\text{in}& \quad  \Omega,
\\ \label{Equ:1.3}
\nabla\times \boldsymbol{p}+  \kappa\boldsymbol{u}=\boldsymbol{f} \quad &\text{in}& \quad  \Omega, \\\label{Equ:1.5}
\boldsymbol{u}\times \boldsymbol{n}=0    \quad &\text{on}& \quad \partial\Omega.
\end{eqnarray}

The mixed finite element method is very convenient for processing high-order equations and equations containing two or more unknown functions, which has attracted widespread attention. For mixed finite element method, there are only few research results for Maxwell problem \cite{JogNan14:887} and Maxwell's eigenvalue problem \cite{JiangLiu14:159,Kik87:509,LiuTob15:458}.

Adaptive finite element method  automatically refines and optimizes meshes according to the singularity of solutions. It is a highly reliable and efficient numerical calculation method. At present, the convergence analysis research of the adaptive mixed finite element method for the elliptic equation is relatively complete. Chen, Holst and Xu \cite{ChenHol09:35} proved the convergence analysis of the adaptive mixed finite element algorithm for elliptic equations. Du and Xie \cite{DuXie15:1327} proved the convergence analysis of the adaptive mixed finite element algorithm for the convection diffusion equation. However,  there are only few research results on the posterior error estimator of Maxwell's equations for the adaptive mixed finite element method. For example, Carstensen and Ma \cite{CarstensenMa21} establishes the convergence of adaptive mixed finite element methods for second-order linear non-self-adjoint indefinite elliptic problems. Carstensen, Hoppe, Sharma and  Warburton \cite{CarHop11:13} designs and analyzes the posterior error estimation of the adaptive hybrid conforming finite element method of $\boldsymbol{H}(\boldsymbol{curl})$-elliptic problem. Recently, Chung, Yuen and  Zhong \cite{ChungYuen14:613} present a-posteriori error analysis for the staggered discontinuous Galerkin  method. As far as we know, there are not any published literatures for the convergence analysis of the adaptive mixed finite element method for the boundary value problem\eqref{Equ:1.3.0}-\eqref{Equ:1.5}. Our contributions in this paper are to
\begin{itemize}
	\item construct a new error estimator, which does not include the negative power of the local mesh size in the jump term for the traditional DG method;
	\item  get the convergence of the Adaptive Mixed Interior Penalty  Discontinuous Galerkin  (AMIPDG)  method by using the similar technique used in \cite{BonitoNochetto10:734}. However, this technique in \cite{BonitoNochetto10:734} can not be used directly for mixed forms.
\end{itemize}


We present our main result in the following theorem.
\begin{theorem}
	Let $\{  \mathcal{T}_k, \boldsymbol{U}_k, \boldsymbol{Q}_k, \boldsymbol{u}_{k}, \boldsymbol{p}_k, \eta( \boldsymbol{u}_{k}, \boldsymbol{p}_{k}; \mathcal{T}_{k})\}_{k\geq 0}$ be the sequence of meshes, finite element space, mixed discrete solution and	posterior error estimate indicator produced by the \textbf{AMIPDG} algorithm.  Then there exist constants $\rho>0$ and  $\delta \in(0, 1)$, which depend on marking parameter and the shape regularity of the initial mesh $\mathcal{T}_{0}$, such that
	\begin{eqnarray}\nonumber
	\|\vert\boldsymbol{u}-\boldsymbol{u}_{k+1} \vert\|_{k+1}^2  + \rho   \eta^2( \boldsymbol{u}_{k+1}, \boldsymbol{p}_{k+1}; \mathcal{T}_{k+1} ) \leq  \delta\bigg(\|\vert  \boldsymbol{u}-\boldsymbol{u}_k\vert\|_k^2+ \rho \eta^2( \boldsymbol{u}_k, \boldsymbol{p}_k; \mathcal{T}_k )\bigg).
	\end{eqnarray}
	Therefore, for a given precision, the \textbf{AMIPDG}  method will terminate after a finite number of operations.
\end{theorem}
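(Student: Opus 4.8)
The plan is to follow the by-now-standard contraction framework for adaptive methods (in the spirit of Casc\'on--Kreuzer--Nochetto--Siebert, and its discontinuous Galerkin adaptation in \cite{BonitoNochetto10:734}), but carried out for the mixed pair $(\boldsymbol{u},\boldsymbol{p})$. The target inequality will be obtained by forming the weighted sum $\|\vert \boldsymbol{u}-\boldsymbol{u}_k\vert\|_k^2 + \rho\,\eta^2(\boldsymbol{u}_k,\boldsymbol{p}_k;\mathcal{T}_k)$ and proving that it contracts by a factor $\delta\in(0,1)$; the three ingredients are reliability, an indicator reduction tied to D\"orfler marking, and a quasi-orthogonality relating the energy error on two consecutive meshes.

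First I would invoke the a posteriori upper bound $\|\vert \boldsymbol{u}-\boldsymbol{u}_k\vert\|_k^2 \le C_{\mathrm{rel}}\,\eta^2(\boldsymbol{u}_k,\boldsymbol{p}_k;\mathcal{T}_k)$, which lets the true energy error be reabsorbed into the indicator at the end. Second, I would establish an indicator reduction estimate. Since the marking step selects a subset $\mathcal{M}_k\subset\mathcal{T}_k$ obeying the D\"orfler criterion $\eta^2(\boldsymbol{u}_k,\boldsymbol{p}_k;\mathcal{M}_k)\ge\theta^2\,\eta^2(\boldsymbol{u}_k,\boldsymbol{p}_k;\mathcal{T}_k)$, and refinement strictly decreases the local mesh size on the marked elements, the indicator evaluated at the \emph{frozen} discrete pair contracts by $(1-\lambda\theta^2)$ for some $\lambda\in(0,1)$. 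Coupling this with the local Lipschitz dependence of $\eta$ on its discrete arguments and Young's inequality with a free parameter $\varepsilon$, I would obtain
\[
\eta^2(\boldsymbol{u}_{k+1},\boldsymbol{p}_{k+1};\mathcal{T}_{k+1})
\le (1+\varepsilon)(1-\lambda\theta^2)\,\eta^2(\boldsymbol{u}_k,\boldsymbol{p}_k;\mathcal{T}_k)
+ C_\varepsilon\,\|\vert \boldsymbol{u}_{k+1}-\boldsymbol{u}_k\vert\|_{k+1}^2 .
\]

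The crux, and the step I expect to be hardest, is the quasi-orthogonality. Because the mixed IPDG spaces on $\mathcal{T}_k$ and $\mathcal{T}_{k+1}$ are non-nested and the energy norm $\|\vert\cdot\vert\|_k$ itself depends on the mesh through its penalty and jump contributions, the exact Pythagorean identity available for conforming nested discretizations fails; this is precisely the point at which the argument of \cite{BonitoNochetto10:734} does not transfer verbatim to the mixed form. I would instead pass through a conforming $\boldsymbol{H}(\boldsymbol{curl})$ companion of the discrete solution (a quasi-interpolation of the broken field onto N\'ed\'elec elements), and control the cross term between $\boldsymbol{u}-\boldsymbol{u}_{k+1}$ and $\boldsymbol{u}_{k+1}-\boldsymbol{u}_k$ using the Galerkin-type consistency of the two mixed bilinear forms together with the jump/penalty bounds supplied by the new indicator, which by construction carries no negative power of the local mesh size. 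The outcome is a perturbed orthogonality
\[
\|\vert \boldsymbol{u}-\boldsymbol{u}_{k+1}\vert\|_{k+1}^2
\le (1+\beta)\,\|\vert \boldsymbol{u}-\boldsymbol{u}_k\vert\|_k^2
-\frac{1}{2}\,\|\vert \boldsymbol{u}_{k+1}-\boldsymbol{u}_k\vert\|_{k+1}^2
+ \text{(data-oscillation terms)},
\]
with $\beta$ as small as desired and the data terms of higher order, hence dominated by the indicator and absorbed at the final step.

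Finally I would multiply the indicator reduction by $\rho$ and add it to the quasi-orthogonality, choosing $\rho$ small enough that the positive $\|\vert \boldsymbol{u}_{k+1}-\boldsymbol{u}_k\vert\|_{k+1}^2$ contribution is dominated by the negative $-\frac{1}{2}\|\vert \boldsymbol{u}_{k+1}-\boldsymbol{u}_k\vert\|_{k+1}^2$ from orthogonality, and then split the factor $(1+\beta)$ so that a small multiple of $\|\vert\boldsymbol{u}-\boldsymbol{u}_k\vert\|_k^2$ is traded, via reliability, against a fraction of $\rho\,\eta^2(\boldsymbol{u}_k,\boldsymbol{p}_k;\mathcal{T}_k)$. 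Balancing $\varepsilon$, $\beta$, $\rho$ and $\theta$ produces a single $\delta\in(0,1)$ together with the stated contraction; iterating it shows that the quasi-error decays geometrically and therefore drops below any prescribed tolerance after finitely many refinement steps, which is the asserted termination.
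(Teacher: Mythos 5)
Your overall architecture --- reliability, a Lipschitz-plus-refinement indicator reduction, a perturbed quasi-orthogonality obtained through a conforming $\boldsymbol{H}(\boldsymbol{curl})$ companion of the broken solution, and a final balancing of parameters --- is exactly the paper's (Theorem~\ref{Thm:1}, Lemmas~\ref{lem:tsm}, \ref{Lem:3.8}, \ref{Lem:3.9}, \ref{Lem:3.11}). However, two concrete points in your plan would not close as written. First, in the indicator-reduction step you record the perturbation as $C_\varepsilon\|\vert\boldsymbol{u}_{k+1}-\boldsymbol{u}_k\vert\|_{k+1}^2$ alone, but the estimator depends on \emph{both} discrete arguments (through $R_1$, $R_2$ and $J_1$), so the Lipschitz estimate produces the full mixed distance $\|(\boldsymbol{u}_{k+1},\boldsymbol{p}_{k+1})-(\boldsymbol{u}_k,\boldsymbol{p}_k)\|^2_{DG}$, which contains $\|\boldsymbol{p}_{k+1}-\boldsymbol{p}_k\|^2_{L^2(\Omega)}$. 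This term does not appear in the contracted quantity and must be eliminated by a separate discrete stability bound of the form
\begin{equation*}
\|\boldsymbol{p}_{k+1}-\boldsymbol{p}_k\|_{L^2(\Omega)}\lesssim \|\nabla\times(\boldsymbol{u}_{k+1}-\boldsymbol{u}_k)\|_{L^2(\Omega)}+\eta(\boldsymbol{u}_{k+1},\boldsymbol{p}_{k+1};\mathcal{T}_{k+1})+\eta(\boldsymbol{u}_k,\boldsymbol{p}_k;\mathcal{T}_k),
\end{equation*}
which is precisely the paper's Lemma~\ref{Lem:add1} and is the place where the mixed structure genuinely intervenes; your plan acknowledges the mixed difficulty only in the quasi-orthogonality step and omits this ingredient entirely.

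Second, you characterize the remainder in the perturbed quasi-orthogonality as ``data-oscillation terms of higher order, hence dominated by the indicator.'' In fact the remainder one obtains (and the one in Lemma~\ref{lem:tsm}) is a \emph{full} estimator term of size $\frac{C_3}{\delta_1\delta_2\alpha}\bigl(\eta^2(\boldsymbol{u}_{k+1},\boldsymbol{p}_{k+1};\mathcal{T}_{k+1})+\eta^2(\boldsymbol{u}_k,\boldsymbol{p}_k;\mathcal{T}_k)\bigr)$: it is not higher order and not oscillation, and it is absorbed only because of the explicit $1/\alpha$ prefactor coming from the lifting-operator bounds on the jump terms. Consequently the final contraction requires the interior penalty parameter $\alpha$ to be chosen sufficiently large (so that, in the paper's notation, $\beta_1>\beta_3$), a hypothesis your balancing of $\varepsilon$, $\beta$, $\rho$, $\theta$ never imposes. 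Without tracking the $1/\alpha$ scaling, the estimator terms generated by the quasi-orthogonality (and by the $\boldsymbol{p}$-difference bound above) cannot be dominated, and the argument stalls at the last step.
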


For convenience, we let $C$ denote a generic positive constant which may be different at different occurrences and adopt the following notation. The subscripted constant $C_i$ represents a particularly important constant. $a\lesssim b$ means $a\leq Cb$ for some constants $C$ which are independent of mesh sizes.

The rest of this paper is organized as follows. In Section 2, we first present the continuous variational problem, the discrete variational problem, and the procedure of AMIPDG. In Section 3, we first show the  upper bound estimate of the error, which is key to the convergence analysis, then we prove the indicator reduction and the convergence of AMIPDG algorithm. In Section 4, we provide some numerical experiments to illustrate the effectiveness of the AMIPDG.

\section{Adaptive Mixed interior penalty  discontinuous Galerkin method}\label{Sec:2}

In this section, we introduce the continuous variational problem, the discrete variational problem of mixed internal penalty discontinuous finite element method, and the procedure of AMIPDG.

\subsection{Continuous variational problem}

For an open and connected bounded domain $D \subset \mathbb{R}^3$, we denote by ${L}^2(D)$ (resp. $\boldsymbol{L}^2(D):=(L^2(D))^{3}$) the spaces of square-integrable functions (resp. vector fields) on $D$ with inner product $(\cdot,\cdot)_{0,D}$.
We define the spaces
\begin{eqnarray*}
	&& \boldsymbol{H} (\boldsymbol{curl};D)=\{ \boldsymbol{u}\in \boldsymbol{L}^2(D):  \nabla\times\boldsymbol{u} \in \boldsymbol{L}^2(D) \},
	\\
	&&\boldsymbol{H} (div;D)=\{ \boldsymbol{u}\in \boldsymbol{L}^2(D):  \nabla\cdot \boldsymbol{u} \in L^2(D) \},
\end{eqnarray*}
with
\begin{eqnarray*}
	&& (\boldsymbol{u}, \boldsymbol{v})_{\boldsymbol{curl}, D}:=(\boldsymbol{u}, \boldsymbol{v})_{0, D}+(\nabla\times\boldsymbol{u}, \nabla\times \boldsymbol{v})_{0, D},
	\quad \forall \boldsymbol{u}, \boldsymbol{v} \in \boldsymbol{H} (\boldsymbol{curl};D),
	\\
	&&(\boldsymbol{u}, \boldsymbol{v})_{div, D}:=(\boldsymbol{u}, \boldsymbol{v})_{0, D}+(\nabla\cdot\boldsymbol{u}, \nabla\cdot\boldsymbol{v})_{0, D},   \quad \forall\boldsymbol{u}, \boldsymbol{v} \in \boldsymbol{H} (div;D),
\end{eqnarray*}
and the induced norm as:
\begin{eqnarray*}
	&&\| \boldsymbol{u}\|^2_{\boldsymbol{curl}, D}:=\|\boldsymbol{u}\|^2_{0, D}+\|\nabla\times \boldsymbol{u}\|^2_{0, D}, \ \forall \boldsymbol{u}\in\boldsymbol{H}(\boldsymbol{curl},D), \\
	&&\|\boldsymbol{u}\|^2_{div, D}:=\| \boldsymbol{u}\|^2_{0, D}+\|\nabla\cdot\boldsymbol{u}\|^2_{0, D},  \  \quad \forall \boldsymbol{u}\in\boldsymbol{H}(div,D),
\end{eqnarray*}
respectively, where $\|\cdot\|_{L^2(D)}:=(\cdot,\cdot)^{1/2}_D$ denotes the norm of the space $L^2(D)$ or $\boldsymbol{L}^2(D)$. We also define $	\boldsymbol{H}_0(\boldsymbol{curl};D)=\{ \boldsymbol{v}\in \boldsymbol{H}(\boldsymbol{curl};D): \boldsymbol{v}\times\boldsymbol{n}=0 \ on \ \partial D  \}$ in the trace sense.

Next,  we first define two space $\boldsymbol{U}:= \boldsymbol{H}_0(curl;\Omega),  \boldsymbol{Q}:=\boldsymbol{L}^2(\Omega)$. Then, the mixed variational problem of the mixed boundary value problem \eqref{Equ:1.3.0}-\eqref{Equ:1.5} reads as: find $ (\boldsymbol{u}, \boldsymbol{p})\in \boldsymbol{U}\times  \boldsymbol{Q}$ such that:
\begin{eqnarray}\label{Equ:3.6}
&& a(\boldsymbol{p}, \boldsymbol{q})-b(\boldsymbol{u}, \boldsymbol{q})=\ell_1(\boldsymbol{q}), \qquad \forall  \boldsymbol{q}\in \boldsymbol{Q}, \\ \label{Equ:3.7}
&&  d(\boldsymbol{v}, \boldsymbol{p})+c(\boldsymbol{u}, \boldsymbol{v})=\ell_2(\boldsymbol{v}), \qquad \forall \boldsymbol{v}\in\boldsymbol{U}.
\end{eqnarray}
The bilinear forms $a, b, c$ and the functionals $\ell_1(\cdot) , \ell_2(\cdot)$ are given by
\begin{eqnarray}\label{Equ:3.8}
&& a(\boldsymbol{p}, \boldsymbol{q}):=(\boldsymbol{p}, \boldsymbol{q}), \\\label{Equ:3.9}
&& b(\boldsymbol{u}, \boldsymbol{q}):=(\mu\nabla\times \boldsymbol{u}, \boldsymbol{q}), \\ \label{Equ:3.10}
&& c(\boldsymbol{u}, \boldsymbol{v}):=( \kappa\boldsymbol{u}, \boldsymbol{v}),
\\ \label{Eqn:3.10}
&& d(\boldsymbol{v},\boldsymbol{p}):=(\nabla\times \boldsymbol{v},\boldsymbol{p})
\\  \label{Equ:3.11}
&& \ell_1(\boldsymbol{q}):=0, \\\label{Equ:3.12}
&& \ell_2(\boldsymbol{v}):=(\boldsymbol{f}, \boldsymbol{v}).
\end{eqnarray}

The operator-theoretic framework involves operator $ \mathcal{A}:(\boldsymbol{U}\times \boldsymbol{Q})\rightarrow (\boldsymbol{U}\times \boldsymbol{Q})^*$  defined by
\begin{equation}\label{Equ:3.13}
( \mathcal{A}(\boldsymbol{u}, \boldsymbol{p}) )(\boldsymbol{v}, \boldsymbol{q}) := a(\boldsymbol{p}, \boldsymbol{q})-b(\boldsymbol{u}, \boldsymbol{q})+d(\boldsymbol{v}, \boldsymbol{p})+c(\boldsymbol{u}, \boldsymbol{v}),  \forall \boldsymbol{u}, \boldsymbol{v}\in \boldsymbol{U} ,   \boldsymbol{p, q} \in \boldsymbol{Q},
\end{equation}
where $(\boldsymbol{Q}  \times \boldsymbol{U})^*$ is the dual spaces of  $(\boldsymbol{Q} \times \boldsymbol{U})$.
Then we can rewrite \eqref{Equ:3.6}-\eqref{Equ:3.7} as
\begin{equation}\label{Equ:3.14}
( \mathcal{A}(\boldsymbol{u}, \boldsymbol{p}) )(\boldsymbol{v}, \boldsymbol{q})=\ell( \boldsymbol{v}, \boldsymbol{q}),
\end{equation}
with $ \ell(\boldsymbol{v}, \boldsymbol{q})=\ell_1(\boldsymbol{q})+\ell_2(\boldsymbol{v})$, and $\ell_i$ are given by \eqref{Equ:3.11}-\eqref{Equ:3.12}.

Then, we state the well-posedness of the variational problem \eqref{Equ:3.6}-\eqref{Equ:3.7} in  the following lemma, and it can be found in section 3 of \cite{CarHop09:27}.
\begin{lemma}\label{Lem:3.0}	
	Under the assumptions on the problem of  \eqref{Equ:1.1}-\eqref{Equ:1.2}, $\mathcal{A}$ is a continuous and bijective linear operator.
	Hence, for any $\ell = (\ell_1, \ell_2)\in (\boldsymbol{Q} \times \boldsymbol{U})^*$, the  mixed variational  problem \eqref{Equ:3.6}-\eqref{Equ:3.7} has a unique solution $(\boldsymbol{u}, \boldsymbol{p}) \in (\boldsymbol{U}\times \boldsymbol{Q})$, which satisfy the following continuously
	\begin{equation}\label{Equ:3.15}
	\|(\boldsymbol{u}, \boldsymbol{p})\|_{\boldsymbol{U}\times \boldsymbol{Q}}
	:=
	(\|\boldsymbol{u}\|^2_{curl,\Omega}+\|\boldsymbol{p}\|^2_{0})^{1/2} \lesssim \|\ell_1 \|_{\boldsymbol{Q}^*}+\|\ell_2\|_{U^*}.
	\end{equation}
\end{lemma}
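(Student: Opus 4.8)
The plan is to reduce the mixed system \eqref{Equ:3.6}--\eqref{Equ:3.7} to the classical primal $\boldsymbol{H}(\boldsymbol{curl})$-elliptic formulation and then invoke the Lax--Milgram theorem, rather than verifying an inf--sup condition for $\mathcal{A}$ directly. Continuity of $\mathcal{A}$ is the routine part: each of $a$, $c$, $d$ is bounded by Cauchy--Schwarz together with the uniform bounds $\mu<\mu_2$ and $\kappa<\kappa_2$, while $b(\boldsymbol{u},\boldsymbol{q})=(\mu\nabla\times\boldsymbol{u},\boldsymbol{q})$ obeys $|b(\boldsymbol{u},\boldsymbol{q})|\le\mu_2\|\boldsymbol{u}\|_{curl,\Omega}\|\boldsymbol{q}\|_0$. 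Adding the four estimates yields $|(\mathcal{A}(\boldsymbol{u},\boldsymbol{p}))(\boldsymbol{v},\boldsymbol{q})|\lesssim\|(\boldsymbol{u},\boldsymbol{p})\|_{\boldsymbol{U}\times\boldsymbol{Q}}\,\|(\boldsymbol{v},\boldsymbol{q})\|_{\boldsymbol{U}\times\boldsymbol{Q}}$, so $\mathcal{A}$ is bounded.

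For bijectivity I would exploit the structure of the first equation. Given $\ell_1\in\boldsymbol{Q}^*$, Riesz representation supplies $\boldsymbol{g}\in\boldsymbol{Q}$ with $\ell_1(\boldsymbol{q})=(\boldsymbol{g},\boldsymbol{q})$ and $\|\boldsymbol{g}\|_0=\|\ell_1\|_{\boldsymbol{Q}^*}$; since \eqref{Equ:3.6} holds for every $\boldsymbol{q}\in\boldsymbol{Q}=\boldsymbol{L}^2(\Omega)$, it forces the pointwise identity $\boldsymbol{p}=\mu\nabla\times\boldsymbol{u}+\boldsymbol{g}$ in $\boldsymbol{L}^2(\Omega)$. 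Inserting this into \eqref{Equ:3.7} eliminates $\boldsymbol{p}$ and leaves the reduced primal problem: find $\boldsymbol{u}\in\boldsymbol{U}$ with $(\mu\nabla\times\boldsymbol{u},\nabla\times\boldsymbol{v})+(\kappa\boldsymbol{u},\boldsymbol{v})=\ell_2(\boldsymbol{v})-(\boldsymbol{g},\nabla\times\boldsymbol{v})$ for all $\boldsymbol{v}\in\boldsymbol{U}$. The decisive point is coercivity of the left-hand bilinear form on $\boldsymbol{U}$: because $\mu>\mu_1>0$ and $\kappa>\kappa_1>0$, one has $(\mu\nabla\times\boldsymbol{u},\nabla\times\boldsymbol{u})+(\kappa\boldsymbol{u},\boldsymbol{u})\ge\min(\mu_1,\kappa_1)\,\|\boldsymbol{u}\|_{curl,\Omega}^2$. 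Lax--Milgram then yields a unique $\boldsymbol{u}$ with $\|\boldsymbol{u}\|_{curl,\Omega}\lesssim\|\ell_2\|_{U^*}+\|\boldsymbol{g}\|_0$, after which $\boldsymbol{p}=\mu\nabla\times\boldsymbol{u}+\boldsymbol{g}$ is uniquely determined and $\|\boldsymbol{p}\|_0\le\mu_2\|\nabla\times\boldsymbol{u}\|_0+\|\boldsymbol{g}\|_0$. Summing these two bounds and using $\|\boldsymbol{g}\|_0=\|\ell_1\|_{\boldsymbol{Q}^*}$ produces the stability estimate \eqref{Equ:3.15}; uniqueness gives injectivity of $\mathcal{A}$, while the fact that the construction succeeds for arbitrary $\ell$ gives surjectivity, so $\mathcal{A}$ is bijective.

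I do not expect a serious obstacle, since the mixed-to-primal reduction is exact and costs nothing. The one step that genuinely carries the argument is the coercivity estimate, and it works here only because both coefficients are strictly positive and bounded away from zero. This is precisely what distinguishes the present problem from the time-harmonic Maxwell system, where a sign-indefinite zeroth-order coefficient would break coercivity and force a Fredholm/compactness argument. Tracking the explicit constants $\min(\mu_1,\kappa_1)$ and $\max(\mu_2,\kappa_2)$ through the reduction is what keeps the hidden constant in \eqref{Equ:3.15} independent of the solution, as the statement requires.
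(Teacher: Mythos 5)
Your proof is correct. Note, however, that the paper does not actually prove Lemma \ref{Lem:3.0}: it simply cites Section~3 of Carstensen--Hoppe \cite{CarHop09:27}, where well-posedness is established within an abstract operator framework for the saddle-point operator $\mathcal{A}$. Your argument is a legitimate, self-contained alternative, and it is the more elementary one: because $\boldsymbol{Q}=\boldsymbol{L}^2(\Omega)$ and \eqref{Equ:3.6} holds against every $\boldsymbol{q}\in\boldsymbol{L}^2(\Omega)$, the Riesz representation of $\ell_1$ lets you eliminate $\boldsymbol{p}=\mu\nabla\times\boldsymbol{u}+\boldsymbol{g}$ exactly, reducing the mixed system to a primal problem whose bilinear form is coercive on $\boldsymbol{H}_0(\boldsymbol{curl};\Omega)$ with constant $\min(\mu_1,\kappa_1)$; Lax--Milgram then gives existence, uniqueness, and the stability bound \eqref{Equ:3.15} with the explicit constants you track, and the reverse substitution gives surjectivity of $\mathcal{A}$. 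What the elimination buys is that you never need to verify a Babu\v{s}ka--Brezzi inf--sup condition for the mixed form; what it costs is generality, since the reduction hinges on the second unknown ranging over all of $\boldsymbol{L}^2(\Omega)$ and on the strict positivity of both $\mu$ and $\kappa$ --- a limitation you correctly identify when contrasting with the indefinite time-harmonic Maxwell case. Your continuity estimates for $a$, $b$, $c$, $d$ are routine and correct, so there is no gap.
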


\subsection{Discrete variational problem}
We suppose that $\mathcal{T}_h$ is a family of shape regularity, quasi-uniform and conform tetrahedral generation on  $ \Omega$. Let $h_ {\tau}=\vert\tau\vert^{1/3}$ denote the mesh size with $\vert \tau \vert$ being the volume of $\tau \in \mathcal{T}_h$.

Define the discontinuous finite element function space $\mathbb{V}(\mathcal{T}_h)$ as:
\begin{eqnarray*}
	\mathbb{V}(\mathcal{T}_h)
	=
	\{ \boldsymbol{v}\in \boldsymbol{L}^2(\Omega):\boldsymbol{v}_\tau =\boldsymbol{v}\vert_\tau \in (P_l(\tau))^3, \quad \forall \tau\in \mathcal{T}_h \},
\end{eqnarray*}
where $P_l(\tau)$ is the set of polynomials defined in the volume $\tau$ whose degree does not exceed $l$, where $l\geq1$ is an integer.

Let $\mathcal{F}_h$, $\mathcal{F}_h^0$ and $\mathcal{F}_h^\partial$ denote the set of the all faces  of its volumes, and the set of internal faces, and the set of boundary faces, respectively. Thus, $\mathcal{F}_h=\mathcal{F}_h^0\bigcup\mathcal{F}_h^\partial$.
Let ${H}^1(\Omega; \mathcal{T}_{h})$ be the space of piecewise Sobolev functions defined by
\begin{equation*}
{H}^1(\Omega; \mathcal{T}_{h})=\left\{\boldsymbol{v}\in{L}^2(\Omega) :  \boldsymbol{v}_\tau=\boldsymbol{v}\vert_\tau\in{H}^1(\tau), \quad \forall\ \tau\in \mathcal{T}_{h}\right\}.
\end{equation*}
and  $ \boldsymbol{H}^1(\Omega; \mathcal{T}_{h}) = ( {H}^1(\Omega; \mathcal{T}_{h}) )^3$. Let $\boldsymbol{L}^2(\mathcal{F}_h)$ be the set of $\boldsymbol{L}^2$ functions defined on $\mathcal{F}_h$. Moreover, we define the following inner products
\begin{eqnarray*}
	(\boldsymbol{v},\boldsymbol{w})_{\mathcal{T}^{'}_h}
	&=&
	\sum\limits_{\tau\in\mathcal{T}^{'}_h}\int_{\tau}\boldsymbol{v}\cdot\boldsymbol{w} \mathrm{d}\boldsymbol{x}, \quad \forall \boldsymbol{v},\boldsymbol{w}\in \boldsymbol{L}^2(\Omega),\ \forall \mathcal{T}^{'}_h \subset \mathcal{T}_h, \\
	<\boldsymbol{v},\boldsymbol{w} >_{\mathcal{F}^{'}_h}
	&=&
	\sum\limits_{f\in\mathcal{F}^{'}_h}\int_{f}\boldsymbol{v}\cdot\boldsymbol{w} \mathrm{d}\boldsymbol{s}, \quad \forall \boldsymbol{v},\boldsymbol{w}\in \boldsymbol{L}^2(\mathcal{F}_h),\ \forall \mathcal{F}^{'}_h \subset \mathcal{F}_h.
\end{eqnarray*}

For $f\in\mathcal{F}^0_h$, we have $\tau_i\in\mathcal{T}_h (i=1, 2)$, such that $f =\partial\tau_1\cap\partial\tau_2$. Then we denote the jump and average of $ \boldsymbol{v}$ as:
\begin{eqnarray*}
	[[ \boldsymbol{v} ]]  &=& \boldsymbol{v}_1\times \boldsymbol{n}_1+ \boldsymbol{v}_2\times \boldsymbol{n}_2, \quad \forall \boldsymbol{v} \in \boldsymbol{H}^1 (\Omega; \mathcal{T}_{h}),
	\\
	\{\{ \boldsymbol{v}  \}\}   &=&\dfrac{\boldsymbol{v}_1 +\boldsymbol{v}_2}{2} , \quad \forall \boldsymbol{v} \in \boldsymbol{H}^1 (\Omega; \mathcal{T}_{h}),
\end{eqnarray*}
where $\boldsymbol{v}_i$ denote the values of $\boldsymbol{v}$ on $\boldsymbol{v}\vert_{\tau_i}(i=1, 2)$ and $\boldsymbol{n}_i$ denote the out unit normal vectors on $f$ exterior $\boldsymbol{v}\vert_{\tau_i}$.

For $f\in\mathcal{F}^{\partial}_h$, we have $\tau\in\mathcal{T}_h$, such that $f = \partial\tau\cap\partial\Omega$. Then we denote the jump and average of $ \boldsymbol{v}$ as:
\begin{equation}
[[ \boldsymbol{v} ]]=\boldsymbol{v}_\tau \times \boldsymbol{n}_{\partial\Omega},\ \ \{\{ \boldsymbol{v}  \}\}=\boldsymbol{v}_\tau.
\end{equation}

Next, we give the corresponding discrete scheme of \eqref{Equ:3.6}-\eqref{Equ:3.7}. Firstly, we define the corresponding discrete space as follow
\begin{eqnarray*}
	&&\boldsymbol{U}_h:= \{ \boldsymbol{v}_h \in \mathbb{V}(\mathcal{T}_h)\vert \quad [[ \boldsymbol{v}_h]]\vert_{f}=0, \forall  f \in \mathcal{F}^\partial_h\}, \\
	&&\boldsymbol{Q}_h:=  \mathbb{V}(\mathcal{T}_h).
\end{eqnarray*}
Then, the formulation of the discrete Mixed Interior Penalty  Discontinuous Galerkin (MIPDG) method reads: find $(\boldsymbol{u}_h, \boldsymbol{p}_h) \in( \boldsymbol{U}_h, \boldsymbol{Q}_h)$ such that
\begin{eqnarray}\label{Equ:3.16}
&& a_h(\boldsymbol{p}_h, \boldsymbol{q}_h)-b_h(\boldsymbol{u}_h, \boldsymbol{q}_h)=\ell_{1, h}(\boldsymbol{q}_h)+d_{1,h}(\boldsymbol{u}_h, \boldsymbol{q}_h), \qquad \forall  \boldsymbol{q}_h\in \boldsymbol{Q}_h,
\\ \label{Equ:3.17}
&&  d_h(\boldsymbol{v}_h, \boldsymbol{p}_h)+c_h(\boldsymbol{u}_h, \boldsymbol{v}_h)=\ell_{2, h}(\boldsymbol{v}_h)+d_{2,h}(\boldsymbol{u}_h, \boldsymbol{v}_h), \qquad \forall \boldsymbol
{v}_h\in \boldsymbol{U}_h,
\end{eqnarray}
where
\begin{eqnarray*}
	&& a_h(\boldsymbol{p}_h, \boldsymbol{q}_h):=(\boldsymbol{p}_h, \boldsymbol{q}_h)_{\mathcal{T}_h},
	\\
	&& b_h(\boldsymbol{u}_h, \boldsymbol{q}_h):=(\mu \nabla \times \boldsymbol{u}_h, \boldsymbol{q}_h )_{\mathcal{T}_h},
	\\
	&& c_h(\boldsymbol{u}_h, \boldsymbol{v}_h):= (\kappa\boldsymbol{u}_h, \boldsymbol{v}_h)_{\mathcal{T}_h},
	\\
	&& d_h(\boldsymbol{v}_h, \boldsymbol{p}_h):= (\nabla\times\boldsymbol{v}_h,\boldsymbol{p}_h)_{\mathcal{T}_h},
	\\
	&& \ell_{1, h}(\boldsymbol{q}_h):=0,
	\\
	&& \ell_{2, h}(\boldsymbol{v}_h):=(\boldsymbol{f}, \boldsymbol{v}_h)_{\mathcal{T}_h},
	\\
	&& d_{1,h}(\boldsymbol{u}_h, \boldsymbol{q}_h):=-< \{\{\mu \boldsymbol{q}_h  \}\} , [[ \boldsymbol{u}_h ]]  >_{\mathcal{F}_h},
	\\
	&& d_{2,h}(\boldsymbol{u}_h, \boldsymbol{v}_h):=< ( \{\{ \mu \nabla \times \boldsymbol{u}_h  \}\} -\alpha h^{-1}_f[[ \boldsymbol{u}_h ]]), [[ \boldsymbol{v}_h ]]  >_{\mathcal{F}_h},
\end{eqnarray*}
here the constant $\alpha>0$ denote the penalty parameter, $h_f$ denote the diameter of the circumcircle of  $f$.
Thus $h_\tau \approx h_f$.
\begin{remark}\label{Rem:c}
	The calculation of $\nabla\times\boldsymbol{u}_h$ in the bilinear terms are  piecewise derivations.
\end{remark}

The standard symmetric Interior Penalty Discontinuous Galerkin  (IPDG) method of the  boundary value problem \eqref{Equ:1.1}-\eqref{Equ:1.2} is to find $ \boldsymbol{u}_h \in  \boldsymbol{U}_h $, such that
\begin{eqnarray} \nonumber
\lefteqn{
	a_{IP}( \boldsymbol{u}_h, \boldsymbol{v}_h)}\\ \nonumber
&&:=
(\kappa\boldsymbol{u}_h, \boldsymbol{v}_h)_{\mathcal{T}_h}
+
(\mu\nabla\times \boldsymbol{u}_h, \nabla\times \boldsymbol{v}_h)_{\mathcal{T}_h}
-
<\{\{\mu \nabla\times \boldsymbol{v}_h \}\} , [[ \boldsymbol{u}_h ]]  >_{\mathcal{F}_h} \\ \label{Equ:3.20}
&&
\quad -
< \{\{\mu \nabla\times \boldsymbol{u}_h   \}\} , [[  \boldsymbol{v}_h]]  >_{\mathcal{F}_h}
+
\alpha h^{-1}_f<[[ \boldsymbol{u}_h ]], [[ \boldsymbol{v}_h ]]  >_{\mathcal{F}_h}\\ \nonumber
&&=(\boldsymbol{f}, \boldsymbol{v}_h)_{\mathcal{T}_h}.
\end{eqnarray}
The following lemma shows that the discrete variational problems  \eqref{Equ:3.16}-\eqref{Equ:3.17} and \eqref{Equ:3.20} are equivalent.
\begin{lemma}\label{Lem:0}[\cite{CarHop09:27}, Theorem 4.1]
	The formulations
	\eqref{Equ:3.16}-\eqref{Equ:3.17} and  \eqref{Equ:3.20} are formally equivalent in the following sense. If $ (\boldsymbol{u}_h, \boldsymbol{p}_h) \in (\boldsymbol{U}_h, \boldsymbol{Q}_h) $ are the solution of discrete variational problem  \eqref{Equ:3.16}-\eqref{Equ:3.17}, then $\boldsymbol{u}_h \in \boldsymbol{U}_h $ solves  \eqref{Equ:3.20}. Conversely, if  $\boldsymbol{u}_h \in \boldsymbol{U}_h $ solves  \eqref{Equ:3.20}, then there exists some $\boldsymbol{p}_h \in  \boldsymbol{Q}_h$ such that $ (\boldsymbol{u}_h, \boldsymbol{p}_h) \in (\boldsymbol{U}_h, \boldsymbol{Q}_h) $ are the solution of  \eqref{Equ:3.16}-\eqref{Equ:3.17}.
\end{lemma}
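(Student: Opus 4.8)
The plan is to establish both implications by eliminating the auxiliary variable $\boldsymbol{p}_h$, and the whole argument hinges on one structural observation: since $\boldsymbol{Q}_h = \mathbb{V}(\mathcal{T}_h)$ is the \emph{full} piecewise-polynomial space of degree $l$ and every $\boldsymbol{v}_h \in \boldsymbol{U}_h \subset \mathbb{V}(\mathcal{T}_h)$ satisfies $\nabla\times\boldsymbol{v}_h|_\tau \in (P_{l-1}(\tau))^3 \subset (P_l(\tau))^3$ (the curl being a piecewise derivation, cf. Remark~\ref{Rem:c}), we have $\nabla\times\boldsymbol{v}_h \in \boldsymbol{Q}_h$ for every $\boldsymbol{v}_h \in \boldsymbol{U}_h$. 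Thus $\boldsymbol{q}_h = \nabla\times\boldsymbol{v}_h$ is an admissible test function in \eqref{Equ:3.16}, and this is exactly what lets us move between the two-field and the one-field formulations.

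For the forward implication, I would take $(\boldsymbol{u}_h, \boldsymbol{p}_h)$ solving \eqref{Equ:3.16}-\eqref{Equ:3.17}. Because $\ell_{1,h} = 0$, inserting $\boldsymbol{q}_h = \nabla\times\boldsymbol{v}_h$ into \eqref{Equ:3.16} yields $d_h(\boldsymbol{v}_h, \boldsymbol{p}_h) = (\boldsymbol{p}_h, \nabla\times\boldsymbol{v}_h)_{\mathcal{T}_h} = (\mu\nabla\times\boldsymbol{u}_h, \nabla\times\boldsymbol{v}_h)_{\mathcal{T}_h} - <\{\{\mu\nabla\times\boldsymbol{v}_h\}\}, [[\boldsymbol{u}_h]]>_{\mathcal{F}_h}$. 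Substituting this identity for $d_h(\boldsymbol{v}_h, \boldsymbol{p}_h)$ into \eqref{Equ:3.17} and expanding $c_h$ and $d_{2,h}$, one checks that the resulting equation is, term by term, $a_{IP}(\boldsymbol{u}_h, \boldsymbol{v}_h) = (\boldsymbol{f}, \boldsymbol{v}_h)_{\mathcal{T}_h}$: the volume term $(\mu\nabla\times\boldsymbol{u}_h, \nabla\times\boldsymbol{v}_h)_{\mathcal{T}_h}$, the two symmetric consistency face terms, the mass term $(\kappa\boldsymbol{u}_h,\boldsymbol{v}_h)_{\mathcal{T}_h}$, and the penalty term $\alpha h_f^{-1}<[[\boldsymbol{u}_h]],[[\boldsymbol{v}_h]]>_{\mathcal{F}_h}$ all appear with the correct signs. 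Hence $\boldsymbol{u}_h$ solves \eqref{Equ:3.20}.

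For the converse, I would let $\boldsymbol{u}_h \in \boldsymbol{U}_h$ solve \eqref{Equ:3.20} and \emph{define} $\boldsymbol{p}_h \in \boldsymbol{Q}_h$ as the unique element with $(\boldsymbol{p}_h, \boldsymbol{q}_h)_{\mathcal{T}_h} = (\mu\nabla\times\boldsymbol{u}_h, \boldsymbol{q}_h)_{\mathcal{T}_h} - <\{\{\mu\boldsymbol{q}_h\}\}, [[\boldsymbol{u}_h]]>_{\mathcal{F}_h}$ for every $\boldsymbol{q}_h \in \boldsymbol{Q}_h$; this is well-posed because $(\cdot,\cdot)_{\mathcal{T}_h}$ is an inner product on $\boldsymbol{Q}_h$ (the block-diagonal mass matrix is invertible). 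By construction $(\boldsymbol{u}_h,\boldsymbol{p}_h)$ satisfies \eqref{Equ:3.16}, and specializing the defining relation to $\boldsymbol{q}_h = \nabla\times\boldsymbol{v}_h$ gives the same formula for $d_h(\boldsymbol{v}_h,\boldsymbol{p}_h)$ as above. Feeding this back into \eqref{Equ:3.20} and moving the consistency and penalty face terms to the right-hand side reconstructs precisely $d_h(\boldsymbol{v}_h,\boldsymbol{p}_h) + c_h(\boldsymbol{u}_h,\boldsymbol{v}_h) = \ell_{2,h}(\boldsymbol{v}_h) + d_{2,h}(\boldsymbol{u}_h,\boldsymbol{v}_h)$, i.e. \eqref{Equ:3.17}. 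The algebra in both directions is routine; the one step that demands genuine care — and the point I would flag as the crux — is the admissibility of the test $\boldsymbol{q}_h = \nabla\times\boldsymbol{v}_h$, which would fail if $\boldsymbol{Q}_h$ were a reduced or curl-conforming space, together with the invertibility of the mass form that makes $\boldsymbol{p}_h$ well-defined in the converse.
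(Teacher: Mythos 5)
Your proof is correct. The paper itself gives no argument for this lemma --- it is stated purely as a citation of Theorem 4.1 in \cite{CarHop09:27} --- and your elimination argument (testing \eqref{Equ:3.16} with $\boldsymbol{q}_h=\nabla\times\boldsymbol{v}_h$, which is admissible precisely because $\boldsymbol{Q}_h=\mathbb{V}(\mathcal{T}_h)$ contains the piecewise curls of $\boldsymbol{U}_h$, and recovering $\boldsymbol{p}_h$ in the converse via the invertible $L^2$ mass form) is exactly the standard route taken in that reference, with the one genuinely delicate point correctly identified.
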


Ayuso de Dios, Hiptmair and Pagliantini proved the well-posedness of \eqref{Equ:3.20} in section 2 of \cite{AyusoHiptmair17:646}.
Therefore, by combining Lemma \ref{Lem:0}, we obtain the well-posedness of discrete variational problems \eqref{Equ:3.16}-\eqref{Equ:3.17}.

\subsection{Adaptive Mixed Interior Penalty  Discontinuous Galerkin method(\textbf{AMIPDG})}

Our adaptive cycle can be implemented by the following algorithm:
\begin{algorithm}  	
	\caption{Adaptive Mixed Interior Penalty  Discontinuous Galerkin Method (AMIPDG) cycle} \label{ALG1}
	{\textbf{Input} initial triangulation $\mathcal{T}_0$; data $\boldsymbol{f}$; tolerance tol; marking parameter $\theta\in(0,1)$.}
	
	{\textbf{Output} a triangulation $\mathcal{T}_J$; MIPDG solution $(\boldsymbol{u}_J,\boldsymbol{p}_J)$.}
	
	{$\eta=1;k=0;$}
	
	{\textbf{while} $\eta\geq tol$}
	
	{~~~~\textbf{SOLVE} solve  discrete varational problem \eqref{Equ:3.16}-\eqref{Equ:3.17} on $\mathcal{T}_k$ to get the solution $(\boldsymbol{u}_k,\boldsymbol{p}_k)$;}
	
	{~~\textbf{ESTIMATE} compute the posterior error estimator $\eta=\eta(\boldsymbol{u}_k,\boldsymbol{p}_k,\mathcal{T}_k)$ by using \eqref{Equ:3.24};}
	
	{~~~\textbf{MARK} seek a minimum cardinality $\mathcal{M}_{k} \subset \mathcal{T}_{k}$ such that
		\begin{equation*}\label{mark}
		\eta^{2}\left(\boldsymbol{u}_{k},\boldsymbol{p}_k, \mathcal{M}_{k}\right) \geq \theta \eta^{2}\left(\boldsymbol{u}_{k},\boldsymbol{p}_k, \mathcal{T}_{k}\right);
		\end{equation*}}	
	{~~\textbf{REFINE} bisect elements in $\mathcal{M}_k$ and the neighboring elements to form a conforming $\mathcal{T}_{k+1}$;}
	
	{	~~~~$k=k+1$; }
	
	{\textbf{end}}
	
	{$\boldsymbol{u}_J=\boldsymbol{u}_k;~\boldsymbol{p}_J=\boldsymbol{p}_k;~\mathcal{T}_J=\mathcal{T}_k;$}
\end{algorithm}

Next, we will discuss each step in AEFEM in detail.

\subsubsection{Procedure SOLVE}
For  $ \boldsymbol{f}\in \boldsymbol{L}^2(\Omega)$, and a shape regular mesh $\mathcal{T}_k$, Let $(\boldsymbol{u}_k,\boldsymbol{p}_k)$ be the exact MIPDG solution of \eqref{Equ:3.16}-\eqref{Equ:3.17}. Here, we assume that the solutions $(\boldsymbol{u}_k,\boldsymbol{p}_k)$ can be solved accurately.

\subsubsection{Procedure ESTIMATE}
A posteriori error indicator is an essential ingredient of adaptivity. They are computable quantities depending on the computed solution(s) and data that provide information about the quality of approximation and may consequently be used to make judicious mesh modifications.
Here, we design a new posteriori error estimation indicator for equations \eqref{Equ:3.16}-\eqref{Equ:3.17}, which is similar to that in \cite{XingZhong12}.
For $\tau\in \mathcal{T}_h$, $f \in \mathcal{F}_h$ and $(\boldsymbol{v}_h, \boldsymbol{q}_h) \in \boldsymbol{U}_h\times \boldsymbol{Q}_h$, the residual a posteriori error estimator for the symmetric AMIPDG method is given by
\begin{eqnarray}\nonumber
\eta^2( \boldsymbol{v}_h, \boldsymbol{q}_h;\tau) :&=& \|R_1 (\boldsymbol{v}_h, \boldsymbol{q}_h) \|^2_{L^2(\tau)}
+
h^2_\tau\big(\|R_2 (\boldsymbol{v}_h, \boldsymbol{q}_h)\|^2_{L^2(\tau)} +\|R_3(\boldsymbol{v}_h)\|^2_{L^2(\tau)} \big)\\ \label{Equ:3.23}
&& +\sum_{f\in \partial\tau} h_f \big(\| J_1(\boldsymbol{q}_h)\|^2_{L^2(f)}+ \|J_2(\boldsymbol{v}_h)\|^2_{L^2(f)}\big). 
\end{eqnarray}
They consist of the element residuals and face jump residuals as
\begin{eqnarray*}
	&&  R_1 (\boldsymbol{v}_h, \boldsymbol{q}_h)\vert_\tau := \boldsymbol{q}_h\vert_\tau- \mu\nabla \times \boldsymbol{v}_h\vert_\tau, \\
	&&  R_2 (\boldsymbol{v}_h, \boldsymbol{q}_h)\vert_\tau := \boldsymbol{f}\vert_\tau- (\nabla \times  \boldsymbol{q}_h+  \kappa\boldsymbol{v}_h)\vert_\tau, \\
	&&  R_3(\boldsymbol{v}_h)\vert_\tau := \nabla\cdot (\boldsymbol{f}\vert_\tau- \kappa\boldsymbol{v}_h\vert_\tau), \\
	&&  J_1(\boldsymbol{q}_h)\vert_f :=[[\boldsymbol{q}_h]] , \\
	&&  J_2(\boldsymbol{v}_h)\vert_f :=[[ (\boldsymbol{f}-\kappa\boldsymbol{v}_h)]].
\end{eqnarray*}
where $h_f$ denote the diameter of the circumcircle of $f$, and $h_\tau \approx h_f$.

For any set $ \mathcal{T}'_h  \subseteq \mathcal{T}_h$, the error indicator is defined as
\begin{equation}\label{Equ:3.24}
\eta^2( \boldsymbol{v}_h, \boldsymbol{q}_h;\mathcal{T}'_h)=\sum_{\tau\in\mathcal{T}'_h} \eta^2( \boldsymbol{v}_h, \boldsymbol{q}_h;\tau).
\end{equation}

\subsubsection{Procedure MARK}
We use the D\"{o}rfler mark which was proposed by D\"{o}rfler \cite{Dor96:1106}.
Set marking parameter $\theta \in(0, 1)$, the module MARK outputs a subset of marked elements $\mathcal{M}_k \subset \mathcal{T}_{k} $ with minimal cardinality, such that
\begin{eqnarray}\label{Equ:4.1}
\eta^2( \boldsymbol{v}_k, \boldsymbol{q}_k;\mathcal{M}_k) \geq \theta \eta^2( \boldsymbol{v}_k, \boldsymbol{q}_k;\mathcal{T}_k).
\end{eqnarray}

\subsubsection{Procedure REFINE}
Our implementation of \textbf{REFINE} uses the longest edge bisection strategy. A detailed introduction about the longest edge bisection strategy was provided in \cite{ChenLiFEM}.
To avoid confusion, the relationship between the two tetrahedral
meshes $\mathcal{T}_h$ and $\mathcal{T}_{H}$ that are nested into each other is defined as: $\mathcal{T}_h$ is the new mesh division of $\mathcal{T}_{H}$ after one cycle of the above cycle process, abbreviated as $ \mathcal{T}_H \leq \mathcal{ T}_h$.

\section{Convergence of \textbf{AMIPDG} algorithm  }
In this section, we establish the upper bound estimate of the error. Subsequently, we demonstrate that the sum of the energy error and the error estimator between two consecutive adaptive loops is a contraction. Finally, we proof that the AMIPDG is convergence.

\subsection{The upper bound estimate of the error}
In this subsection, before establishing the reliability of a posteriori error estimator, we need to define the corresponding DG norm, for any $(\boldsymbol{v}, \boldsymbol{q}) \in \boldsymbol{U}\times \boldsymbol{Q}  $ and $(\boldsymbol{v}_h, \boldsymbol{q}_h) \in  \boldsymbol{U}_h\times \boldsymbol{Q}_h$,
\begin{eqnarray}\nonumber
\|(\boldsymbol{v}, \boldsymbol{q})&-&(\boldsymbol{v}_h, \boldsymbol{q}_h)\|^2_{DG}:=
\|\boldsymbol{q}-\boldsymbol{q}_h\|^2_{L^2(\Omega)}
+
\|\kappa(\boldsymbol{v}-\boldsymbol{v}_h)\|^2_{L^2(\Omega)} \\ \label{Equ:3.25}
&+&
\sum\limits_{\tau\in\mathcal{T}_h} \|\mu\nabla\times(\boldsymbol{v}-\boldsymbol{v}_h)\|^2_{L^2(\tau)}
+
\sum\limits_{f\in\mathcal{F}_h}\alpha h_f^{-1} <[[ \boldsymbol{v}_h ]]   , [[ \boldsymbol{v}_h ]]  >_{f}.
\end{eqnarray}

\begin{remark}\label{Rem:3.3}
	For any  $\boldsymbol{v}\in \boldsymbol{U}$ and $\boldsymbol{v}_h\in\boldsymbol{U}_h$, we have
	\begin{equation*}
	\|[[ \boldsymbol{v}_h ]]  \|^2_{L^2(f)} = \|[[  ( \boldsymbol{v}-\boldsymbol{v}_h) ]] \|^2_{L^2(f)},  \quad \forall f\in \mathcal{F}_h.
	\end{equation*}
	In fact, $\boldsymbol{v}\in \boldsymbol{U}$ implies that $[[ \boldsymbol{v} ]] \vert_f=0$ (see Chapter 5  of \cite{Monk03Book}).
\end{remark}

We summarize our main result  in this subsection as follows.
\begin{theorem}\label{Thm:1}
	Let $(\boldsymbol{u}, \boldsymbol{p})\in \mathbf {U}\times \boldsymbol{Q}$ and $(\boldsymbol{u}_h, \boldsymbol{p}_h) \in \boldsymbol{U}_h\times \boldsymbol{Q}_h$ be the solutions of  \eqref{Equ:3.6}-\eqref{Equ:3.7}  and   \eqref{Equ:3.16}-\eqref{Equ:3.17}, respectively. Let $\eta( \boldsymbol{u}_h, \boldsymbol{p}_h;\mathcal{T}_h)$ be the residual error indicator of \eqref{Equ:3.24}. Then we have the following  estimate
	\begin{equation}\label{Equ:3.26}
	\|(\boldsymbol{u}, \boldsymbol{p})-(\boldsymbol{u}_h, \boldsymbol{p}_h)\|^2_{DG} \leq C_1 \eta^2( \boldsymbol{u}_h, \boldsymbol{p}_h;\mathcal{T}_h),
	\end{equation}
	where the constant $C_1$ depending on the shape regularity of mesh.
\end{theorem}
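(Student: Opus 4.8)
The plan is to bound each of the four contributions to $\|(\boldsymbol{u},\boldsymbol{p})-(\boldsymbol{u}_h,\boldsymbol{p}_h)\|^2_{DG}$ separately by $\eta^2(\boldsymbol{u}_h,\boldsymbol{p}_h;\mathcal{T}_h)$, and the cornerstone is the observation that the first discrete equation \eqref{Equ:3.16} already encodes the tangential jumps of $\boldsymbol{u}_h$ inside the residual $R_1$. First I would rewrite \eqref{Equ:3.16}, using $\ell_{1,h}=0$, as $(R_1(\boldsymbol{u}_h,\boldsymbol{p}_h),\boldsymbol{q}_h)_{\mathcal{T}_h}=-\langle\{\{\mu\boldsymbol{q}_h\}\},[[\boldsymbol{u}_h]]\rangle_{\mathcal{F}_h}$ for every $\boldsymbol{q}_h\in\boldsymbol{Q}_h$. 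Choosing $\boldsymbol{q}_h$ as a face-bubble lifting of the jumps so that $\{\{\mu\boldsymbol{q}_h\}\}|_f=-h_f^{-1}[[\boldsymbol{u}_h]]$, the scaling of face bubbles gives $\|\boldsymbol{q}_h\|_0^2\lesssim\sum_f h_f^{-1}\|[[\boldsymbol{u}_h]]\|_{L^2(f)}^2$, while the right-hand side becomes $\sum_f h_f^{-1}\|[[\boldsymbol{u}_h]]\|_{L^2(f)}^2$. A Cauchy–Schwarz argument then yields $\sum_f\alpha h_f^{-1}\|[[\boldsymbol{u}_h]]\|_{L^2(f)}^2\lesssim\|R_1\|_0^2\le\eta^2(\boldsymbol{u}_h,\boldsymbol{p}_h;\mathcal{T}_h)$. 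This is precisely why the estimator needs no negative power of $h_f$ in its jump terms: the jump of the DG norm is absorbed by $\|R_1\|_0^2$.

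With the jump term controlled, I would dispose of the $\boldsymbol{p}$-error cheaply. Since $\boldsymbol{p}=\mu\nabla\times\boldsymbol{u}$ and $R_1=\boldsymbol{p}_h-\mu\nabla\times\boldsymbol{u}_h$, one has $\boldsymbol{p}-\boldsymbol{p}_h=\mu\nabla\times(\boldsymbol{u}-\boldsymbol{u}_h)-R_1$, hence $\|\boldsymbol{p}-\boldsymbol{p}_h\|_0^2\lesssim\sum_\tau\|\mu\nabla\times(\boldsymbol{u}-\boldsymbol{u}_h)\|_{L^2(\tau)}^2+\|R_1\|_0^2$. The whole estimate therefore reduces to the genuine energy error $\|\kappa(\boldsymbol{u}-\boldsymbol{u}_h)\|_0^2+\sum_\tau\|\mu\nabla\times(\boldsymbol{u}-\boldsymbol{u}_h)\|_{L^2(\tau)}^2$.

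To estimate the energy error I would first replace the nonconforming $\boldsymbol{u}_h$ by a conforming field $\boldsymbol{u}_h^c\in\boldsymbol{U}_h\cap\boldsymbol{U}$ through a node-averaging operator, so that $\|\kappa(\boldsymbol{u}_h-\boldsymbol{u}_h^c)\|_0^2+\sum_\tau\|\mu\nabla\times(\boldsymbol{u}_h-\boldsymbol{u}_h^c)\|_{L^2(\tau)}^2\lesssim\sum_f h_f^{-1}\|[[\boldsymbol{u}_h]]\|_{L^2(f)}^2\lesssim\|R_1\|_0^2$ by the first step. For the conforming error $\boldsymbol{e}:=\boldsymbol{u}-\boldsymbol{u}_h^c\in\boldsymbol{U}$, I would invoke the regular Helmholtz decomposition $\boldsymbol{e}=\nabla\phi+\boldsymbol{z}$ with $\phi\in H_0^1(\Omega)$ and $\boldsymbol{z}\in\boldsymbol{U}\cap\boldsymbol{H}^1(\Omega)$ carrying the needed $H^1$-stability bound. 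Testing the energy identity against $\nabla\phi$ and $\boldsymbol{z}$ separately, performing elementwise integration by parts, and exploiting Galerkin orthogonality by subtracting a quasi-interpolant of $\boldsymbol{z}$ in $\boldsymbol{U}_h$: the regular part $\boldsymbol{z}$ produces the element residual $R_2$ weighted by $h_\tau$ (matched with the $H^1$-norm of $\boldsymbol{z}$ via interpolation and trace estimates) and the face jump $J_1=[[\boldsymbol{p}_h]]$ weighted by $h_f^{1/2}$, while the gradient part $\nabla\phi$, after integration by parts against $\boldsymbol{f}-\kappa\boldsymbol{u}_h$, produces the divergence residual $R_3$ and the jump $J_2=[[\boldsymbol{f}-\kappa\boldsymbol{u}_h]]$, the curl contribution dropping out because $(\nabla\times\boldsymbol{p}_h,\nabla\phi)$ vanishes up to jumps already accounted for. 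Every occurrence of $\mu\nabla\times\boldsymbol{u}_h$ is then swapped for $\boldsymbol{p}_h$ at the cost of further $\|R_1\|_0$ contributions, and collecting all bounds gives \eqref{Equ:3.26}.

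I expect the main obstacle to be the step that ties the mixed formulation to the primal residual machinery of \cite{BonitoNochetto10:734}, which cannot be applied verbatim. The Helmholtz splitting must be arranged to reproduce exactly the $\kappa$-weighted $L^2$ term of the DG norm; the regular component $\boldsymbol{z}$ must be quasi-interpolated while respecting the essential tangential boundary condition on $\partial\Omega$; and the consistent substitution of $\boldsymbol{p}_h$ for $\mu\nabla\times\boldsymbol{u}_h^c$ and of $\boldsymbol{u}_h^c$ for $\boldsymbol{u}_h$ must be carried through every residual term so that the Helmholtz stability constant, the interpolation constants, and the bounds on $\mu,\kappa$ all absorb into a single $C_1$ depending only on the shape regularity of $\mathcal{T}_h$. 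This bookkeeping, rather than any single inequality, is the delicate part that distinguishes the mixed analysis from the standard primal IPDG case.
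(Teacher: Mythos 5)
Your overall architecture --- peel off the jump term, reduce $\|\boldsymbol{p}-\boldsymbol{p}_h\|_{0}$ to the curl error plus $\|R_1\|_{0}$ via $\boldsymbol{p}-\boldsymbol{p}_h=\mu\nabla\times(\boldsymbol{u}-\boldsymbol{u}_h)-R_1$, replace $\boldsymbol{u}_h$ by a conforming field, then run a regular (Helmholtz-type) decomposition with a quasi-interpolant against the residual identity to produce $R_2$, $R_3$, $J_1$, $J_2$ --- is essentially the paper's route: the paper packages the same ingredients through the stability of the mixed operator $\mathcal{A}$ (Lemma \ref{Lem:3}), the Sch\"oberl operator of Lemma \ref{Lem:4.1} (Lemmas \ref{Lem:4} and \ref{Lem:5}), and the conforming minimizer $\tilde{\boldsymbol{u}}_h$ (Lemma \ref{Lem:3.2}). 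The difference that matters is your cornerstone first step, and there the argument has a genuine gap.

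You claim that testing $(R_1(\boldsymbol{u}_h,\boldsymbol{p}_h),\boldsymbol{q}_h)_{\mathcal{T}_h}=-\langle\{\{\mu\boldsymbol{q}_h\}\},[[\boldsymbol{u}_h]]\rangle_{\mathcal{F}_h}$ with a face-bubble lifting of the jumps yields $\sum_f h_f^{-1}\|[[\boldsymbol{u}_h]]\|^2_{L^2(f)}\lesssim\|R_1\|_0^2$. Two things go wrong. First, the identity is available only for $\boldsymbol{q}_h\in\boldsymbol{Q}_h=\mathbb{V}(\mathcal{T}_h)$, and your test function is not admissible: an extension of $[[\boldsymbol{u}_h]]$ multiplied by a face bubble has degree $l+3>l$, while a degree-$l$ polynomial on a tetrahedron whose traces vanish on the other three faces must contain the factor $\lambda_1\lambda_2\lambda_3$, so its restriction to $f$ lies in $b_f\cdot (P_{l-3}(f))^3$ and a dimension count shows it cannot realize the pairing against an arbitrary jump in $(P_l(f))^3$. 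Second, and more fundamentally, since $R_1=\boldsymbol{p}_h-\mu\nabla\times\boldsymbol{u}_h$ itself belongs to $\boldsymbol{Q}_h$, equation \eqref{Equ:3.16} says precisely that $R_1$ is the (negative, $\mu$-weighted) \emph{global} lifting of the tangential jumps into $\boldsymbol{Q}_h$; your inequality asserts that the $L^2$-norm of this global lifting dominates $\|h_f^{-1/2}[[\boldsymbol{u}_h]]\|_{L^2(\mathcal{F}_h)}$. Only the reverse bound holds in general (take $\boldsymbol{q}_h=R_1$ and use a trace inverse inequality): cancellation between the single-face contributions is exactly the known deficiency of BR1-type liftings, which is why such liftings do not furnish a stable penalization. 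The paper never tries to extract jump control from the first mixed equation alone; it passes to the equivalent primal IPDG scheme \eqref{Equ:3.20} via Lemma \ref{Lem:0}, where the penalty $\alpha h_f^{-1}\langle[[\boldsymbol{u}_h]],[[\boldsymbol{v}_h]]\rangle$ appears explicitly, and invokes the estimate \eqref{Eqn:ajump} from \cite{XingZhong12}, which bounds the scaled jumps by the \emph{full} estimator $\eta(\boldsymbol{u}_h,\boldsymbol{p}_h;\mathcal{T}_h)$, not by $\|R_1\|_0$ alone. Since your bound on the conforming--nonconforming gap, and hence on the whole energy error, is chained to this first step, the gap propagates through the remainder of your argument; to repair it you would need to import something like \eqref{Eqn:ajump} rather than derive the jump control from $R_1$.
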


Let $(\boldsymbol{u}_h,\boldsymbol{p}_h)\in\boldsymbol{U}_h\times\boldsymbol{Q}_h$ be the solution of \eqref{Equ:3.16}-\eqref{Equ:3.17}, similarly to \cite{CarHop11:13}, we introduce the nonconformity of the MSIPDG method results in some consistency error:
\begin{equation}\label{Equ:3.27}
\zeta:=\min_{\tilde{\boldsymbol{v}}_h\in\boldsymbol{U}} \big(\sum_{\tau\in\mathcal{T}_h}(\|\boldsymbol{u}_h- \tilde{\boldsymbol{v}}_h\|^2_{ {L}^2(\tau)}+\|\nabla\times( \boldsymbol{u}_h- \tilde{\boldsymbol{v}}_h)\|^2_{ {L}^2(\tau)})  \big)^{1/2}.
\end{equation}
We denote that $\tilde{\boldsymbol{u}}_h\in \boldsymbol{U}$ is the unique minimizer of \eqref{Equ:3.27}, namely
\begin{equation}\label{Eqn:zeta}
\tilde{\zeta}=\big(\sum_{\tau\in\mathcal{T}_h}(\|\boldsymbol{u}_h- \tilde{\boldsymbol{u}}_h\|^2_{ {L}^2(\tau)}+\|\nabla\times( \boldsymbol{u}_h- \tilde{\boldsymbol{u}}_h)\|^2_{ {L}^2(\tau)}) \big)^{1/2}.
\end{equation}

\begin{lemma}\label{Lem:3}
	Let $(\boldsymbol{u}, \boldsymbol{p})\in \mathbf {U}\times \boldsymbol{Q}$ and $(\boldsymbol{u}_h, \boldsymbol{p}_h) \in \boldsymbol{U}_h\times \boldsymbol{Q}_h$ be the solutions of  \eqref{Equ:3.6}-\eqref{Equ:3.7}  and   \eqref{Equ:3.16}-\eqref{Equ:3.17}, respectively, let $\tilde{\boldsymbol{u}}_h$ be the unique minimizer of \eqref{Equ:3.27}, then
	\begin{equation*}
	\|(\boldsymbol{u}-\tilde{\boldsymbol{u}}_h, \boldsymbol{p}-\boldsymbol{p}_h)\|_{\boldsymbol{U}\times \boldsymbol{Q}}
	=
	(\|\boldsymbol{u}-\tilde{\boldsymbol{u}}_h \|^2_{curl,\Omega}
	+
	\|\boldsymbol{p}-\boldsymbol{p}_h\|^2_{0})^{1/2}
	\lesssim \|\tilde{\ell}_1\|_{\boldsymbol{Q}^*}+\|\tilde{\ell}_2\|_{\boldsymbol{U}^*},
	\end{equation*}
	where the residuals $\tilde{\ell}_1\in \boldsymbol{Q }^*$ and $\tilde{\ell}_2\in \boldsymbol{U}^*$ defined by
	\begin{eqnarray}\label{Equ:3.28}
	\tilde{\ell}_1(\boldsymbol{q})
	=
	\ell_1(\boldsymbol{q})
	-
	a(\boldsymbol{p}_h, \boldsymbol{q})
	+
	b(\tilde{\boldsymbol{u}}_h, \boldsymbol{q}), \quad \forall q\in \boldsymbol{Q}, \\\label{Equ:3.29}
	\tilde{\ell}_2(\boldsymbol{v})
	=
	\ell_2(\boldsymbol{v})
	-
	d(\boldsymbol{v}, \boldsymbol{p}_h)
	-
	c(\tilde{\boldsymbol{u}}_h, \boldsymbol{v}), \quad \forall \boldsymbol{v}\in \boldsymbol{U} .
	\end{eqnarray}
\end{lemma}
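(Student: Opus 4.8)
The plan is to read this lemma as a direct corollary of the continuous well-posedness recorded in Lemma \ref{Lem:3.0}, applied not to the original data $\ell$ but to the residual data $(\tilde{\ell}_1, \tilde{\ell}_2)$. The crucial structural fact is that the pair $(\tilde{\boldsymbol{u}}_h, \boldsymbol{p}_h)$ is \emph{conforming}: by definition of the minimizer in \eqref{Equ:3.27} we have $\tilde{\boldsymbol{u}}_h \in \boldsymbol{U}$, while $\boldsymbol{p}_h \in \boldsymbol{Q}_h \subset \boldsymbol{Q} = \boldsymbol{L}^2(\Omega)$. Hence the error pair $(\boldsymbol{u} - \tilde{\boldsymbol{u}}_h, \boldsymbol{p} - \boldsymbol{p}_h)$ again lies in $\boldsymbol{U} \times \boldsymbol{Q}$ and is therefore an admissible argument for the continuous operator $\mathcal{A}$.

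First I would insert this error pair into the operator form \eqref{Equ:3.13}. Using linearity of $\mathcal{A}$ and the fact that $(\boldsymbol{u}, \boldsymbol{p})$ solves \eqref{Equ:3.14}, for every test pair $(\boldsymbol{v}, \boldsymbol{q}) \in \boldsymbol{U} \times \boldsymbol{Q}$ I would compute
\begin{equation*}
\big(\mathcal{A}(\boldsymbol{u} - \tilde{\boldsymbol{u}}_h, \boldsymbol{p} - \boldsymbol{p}_h)\big)(\boldsymbol{v}, \boldsymbol{q}) = \ell(\boldsymbol{v}, \boldsymbol{q}) - \big(\mathcal{A}(\tilde{\boldsymbol{u}}_h, \boldsymbol{p}_h)\big)(\boldsymbol{v}, \boldsymbol{q}).
\end{equation*}
Expanding the second term with the definition \eqref{Equ:3.13} and regrouping the forms $a, b, c, d$ together with $\ell_1, \ell_2$, the right-hand side collapses exactly to $\tilde{\ell}_1(\boldsymbol{q}) + \tilde{\ell}_2(\boldsymbol{v})$ as defined in \eqref{Equ:3.28}-\eqref{Equ:3.29}. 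In other words, the error pair solves the same mixed variational problem \eqref{Equ:3.6}-\eqref{Equ:3.7}, only with the residual functionals $(\tilde{\ell}_1, \tilde{\ell}_2)$ in place of $(\ell_1, \ell_2)$.

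Since Lemma \ref{Lem:3.0} asserts that $\mathcal{A}$ is bijective and that the stability bound \eqref{Equ:3.15} holds for \emph{any} right-hand side in $(\boldsymbol{Q} \times \boldsymbol{U})^*$, I would finish by applying that bound to the residual problem, obtaining
\begin{equation*}
\|(\boldsymbol{u} - \tilde{\boldsymbol{u}}_h, \boldsymbol{p} - \boldsymbol{p}_h)\|_{\boldsymbol{U} \times \boldsymbol{Q}} \lesssim \|\tilde{\ell}_1\|_{\boldsymbol{Q}^*} + \|\tilde{\ell}_2\|_{\boldsymbol{U}^*},
\end{equation*}
which is the assertion. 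The argument is essentially an algebraic verification once the conformity of $(\tilde{\boldsymbol{u}}_h, \boldsymbol{p}_h)$ is in hand, so I do not expect a genuine obstacle here; the only points needing care are (i) confirming that $\tilde{\ell}_1$ and $\tilde{\ell}_2$ are indeed bounded functionals on $\boldsymbol{Q}$ and $\boldsymbol{U}$ respectively, which follows from the continuity of $a, b, c, d$ together with $\boldsymbol{p}_h \in \boldsymbol{Q}$ and $\tilde{\boldsymbol{u}}_h \in \boldsymbol{U}$, and (ii) keeping the sign and argument-order conventions of $b(\cdot,\cdot)$ and $d(\cdot,\cdot)$ consistent when regrouping, since these are precisely what make the residual terms match \eqref{Equ:3.28}-\eqref{Equ:3.29}. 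The genuinely substantive work, namely bounding $\|\tilde{\ell}_1\|_{\boldsymbol{Q}^*}$ and $\|\tilde{\ell}_2\|_{\boldsymbol{U}^*}$ by the computable estimator $\eta$, is deferred to the proof of Theorem \ref{Thm:1} rather than carried out in this lemma.
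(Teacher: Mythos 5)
Your proposal is correct and follows essentially the same route as the paper: both exploit the linearity of $\mathcal{A}$ to show that the conforming error pair $(\boldsymbol{u}-\tilde{\boldsymbol{u}}_h,\boldsymbol{p}-\boldsymbol{p}_h)\in\boldsymbol{U}\times\boldsymbol{Q}$ satisfies $(\mathcal{A}(\boldsymbol{u}-\tilde{\boldsymbol{u}}_h,\boldsymbol{p}-\boldsymbol{p}_h))(\boldsymbol{v},\boldsymbol{q})=\tilde{\ell}_1(\boldsymbol{q})+\tilde{\ell}_2(\boldsymbol{v})$, and then invoke the stability estimate of Lemma \ref{Lem:3.0} for the residual data. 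No gaps; the points of care you flag (conformity of $\tilde{\boldsymbol{u}}_h$, boundedness of the residual functionals, sign conventions in $b$ and $d$) are exactly the ones the paper's own computation relies on.
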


\begin{proof}
	For any $ \boldsymbol{q}_1, \boldsymbol{q}_2 , \boldsymbol{q}\in \boldsymbol{Q} $ and any $\boldsymbol{v}_1, \boldsymbol{v}_2 , \boldsymbol{v}\in \boldsymbol{U}$. we have the following property  by \eqref{Equ:3.13}
	\begin{eqnarray*}
		\lefteqn
		{
			(\mathcal{A}(\boldsymbol{v}_1+\boldsymbol{v}_2, \boldsymbol{q}_1+\boldsymbol{q}_2))(\boldsymbol{v}, \boldsymbol{q})
		}\\
		&&
		=
		a(\boldsymbol{q}_1+\boldsymbol{q}_2, \boldsymbol{q})-b(\boldsymbol{v}_1+\boldsymbol{v}_2, \boldsymbol{q})+d(\boldsymbol{v}, \boldsymbol{q}_1+\boldsymbol{q}_2)+c(\boldsymbol{v}_1+\boldsymbol{v}_2, \boldsymbol{v})\\
		&&
		=
		a(\boldsymbol{q}_1, \boldsymbol{q})-b(\boldsymbol{v}_1, \boldsymbol{q})+d(\boldsymbol{v}, \boldsymbol{q}_1)+c(\boldsymbol{v}_1, \boldsymbol{v})\\
		&&\quad
		+
		a(\boldsymbol{q}_2, \boldsymbol{q})-b(\boldsymbol{v}_2, \boldsymbol{q})+d(\boldsymbol{v}, \boldsymbol{q}_2)+c(\boldsymbol{v}_2, \boldsymbol{v})\\
		&&
		=
		(\mathcal{A}(\boldsymbol{v}_1, \boldsymbol{q}_1))(\boldsymbol{v}, \boldsymbol{q})+(\mathcal{A}(\boldsymbol{v}_2, \boldsymbol{q}_2))(\boldsymbol{v}, \boldsymbol{q}).
	\end{eqnarray*}
	Thus,
	\begin{eqnarray*}
		\lefteqn{
			(\mathcal{A}(\boldsymbol{u}-\tilde{\boldsymbol{u}}_h, \boldsymbol{p}-\boldsymbol{p}_h))
			(\boldsymbol{v}, \boldsymbol{q})
		}\\
		&&
		=
		(\mathcal{A}(\boldsymbol{u}, \boldsymbol{p}))(\boldsymbol{v}, \boldsymbol{q})-(\mathcal{A}(\tilde{\boldsymbol{u}}_h, \boldsymbol{p}_h))(\boldsymbol{v}, \boldsymbol{q})\\
		&&
		=
		(\ell_1(\boldsymbol{q})+\ell_2(\boldsymbol{v}))- (a(\boldsymbol{p}_h, \boldsymbol{q})-b(\tilde{\boldsymbol{u}}_h, \boldsymbol{q})+d(\boldsymbol{v}, \boldsymbol{p}_h)+c(\tilde{\boldsymbol{u}}_h,\boldsymbol{v}))\\
		&&
		=
		\tilde{\ell}_1(\boldsymbol{q}) +\tilde{\ell}_2(\boldsymbol{v}).
	\end{eqnarray*}
	In fact that $ (\boldsymbol{u}-\tilde{\boldsymbol{u}}_h, \boldsymbol{p}-\boldsymbol{p}_h)\in\boldsymbol{U}\times \boldsymbol{Q} $  and combining the Lemma \ref{Lem:3.0} can  concludes the proof.	
	
\end{proof}

Next, we will provide upper bounds for $\| \tilde{\ell}_1\|_{\boldsymbol{Q }^*}$ and  $\| \tilde{\ell}_2\|_{\boldsymbol{U}^*}$ in Lemmas \ref{Lem:4} and \ref{Lem:5}, respectively.
\begin{lemma}\label{Lem:4}
	Let $(\boldsymbol{u}_h, \boldsymbol{p}_h)\in \boldsymbol{U}_h\times  \boldsymbol{Q}_h$ be the solutions of \eqref{Equ:3.16}-\eqref{Equ:3.17}, and $\tilde{\boldsymbol{u}}_h$ be the unique minimizer of \eqref{Equ:3.27}. Then we get the estimate of the linear functional $\tilde{\ell}_1$ defined in \eqref{Equ:3.28} as following
	\begin{equation}\label{Eqn:l1t}
	\| \tilde{\ell}_1\|_{\boldsymbol{Q }^*}
	\lesssim
	\big( \sum_{\tau\in\mathcal{T}_h} \|R_1( \boldsymbol{u}_h, \boldsymbol{p}_h)\|^2_{L^2(\tau)} \big)^{1/2}
	+
	\big( \sum_{\tau\in\mathcal{T}_h}\|\nabla\times (\tilde{\boldsymbol{u}}_h-\boldsymbol{u}_h)\|^2_{L^2(\tau)}\big)^{1/2}.
	\end{equation}
\end{lemma}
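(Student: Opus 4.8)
The plan is to exploit the fact that $\boldsymbol{Q} = \boldsymbol{L}^2(\Omega)$, so that its dual norm coincides with the $L^2$-norm and no inf--sup argument is needed. First I would insert the definitions \eqref{Equ:3.8}, \eqref{Equ:3.9} and \eqref{Equ:3.11} of $a$, $b$ and $\ell_1$ into \eqref{Equ:3.28} to obtain, for every $\boldsymbol{q}\in\boldsymbol{Q}$,
\begin{equation*}
\tilde{\ell}_1(\boldsymbol{q}) = -(\boldsymbol{p}_h, \boldsymbol{q}) + (\mu\nabla\times\tilde{\boldsymbol{u}}_h, \boldsymbol{q}) = (\mu\nabla\times\tilde{\boldsymbol{u}}_h - \boldsymbol{p}_h, \boldsymbol{q}).
\end{equation*}
The key algebraic step is then to introduce the computable residual $R_1$ by adding and subtracting the piecewise curl $\mu\nabla\times\boldsymbol{u}_h$ of the discrete solution,
\begin{equation*}
\mu\nabla\times\tilde{\boldsymbol{u}}_h - \boldsymbol{p}_h = \mu\nabla\times(\tilde{\boldsymbol{u}}_h - \boldsymbol{u}_h) - \big(\boldsymbol{p}_h - \mu\nabla\times\boldsymbol{u}_h\big) = \mu\nabla\times(\tilde{\boldsymbol{u}}_h - \boldsymbol{u}_h) - R_1(\boldsymbol{u}_h, \boldsymbol{p}_h),
\end{equation*}
so that $\tilde{\ell}_1(\boldsymbol{q}) = \big(\mu\nabla\times(\tilde{\boldsymbol{u}}_h - \boldsymbol{u}_h) - R_1(\boldsymbol{u}_h, \boldsymbol{p}_h), \boldsymbol{q}\big)$.

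Next I would pass to the dual norm. Since $\boldsymbol{Q} = \boldsymbol{L}^2(\Omega)$ is a Hilbert space with the $L^2$-inner product, the Cauchy--Schwarz inequality gives $|\tilde{\ell}_1(\boldsymbol{q})| \le \|\boldsymbol{g}\|_{L^2(\Omega)}\,\|\boldsymbol{q}\|_{\boldsymbol{Q}}$ with $\boldsymbol{g} := \mu\nabla\times(\tilde{\boldsymbol{u}}_h - \boldsymbol{u}_h) - R_1(\boldsymbol{u}_h, \boldsymbol{p}_h)$, and the test choice $\boldsymbol{q} = \boldsymbol{g}$ shows the bound is attained, hence $\|\tilde{\ell}_1\|_{\boldsymbol{Q}^*} = \|\boldsymbol{g}\|_{L^2(\Omega)}$. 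A triangle inequality, the uniform bound $\mu < \mu_2$ on the piecewise-constant coefficient, and the element-wise splitting of the global $L^2(\Omega)$-norm over $\mathcal{T}_h$ (recalling from Remark \ref{Rem:c} that all curls are understood piecewise) then yield
\begin{equation*}
\|\tilde{\ell}_1\|_{\boldsymbol{Q}^*} \le \Big(\sum_{\tau\in\mathcal{T}_h}\|R_1(\boldsymbol{u}_h, \boldsymbol{p}_h)\|^2_{L^2(\tau)}\Big)^{1/2} + \mu_2\Big(\sum_{\tau\in\mathcal{T}_h}\|\nabla\times(\tilde{\boldsymbol{u}}_h - \boldsymbol{u}_h)\|^2_{L^2(\tau)}\Big)^{1/2},
\end{equation*}
which is exactly \eqref{Eqn:l1t} after absorbing $\mu_2$ into the generic constant.

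There is essentially no serious obstacle in this estimate, precisely because $\boldsymbol{Q}$ carries only the $L^2$-norm so the dual pairing is elementary; the one point requiring care is the bookkeeping of the piecewise curl, since $\boldsymbol{u}_h$ and $\tilde{\boldsymbol{u}}_h$ are not globally $\boldsymbol{H}(\boldsymbol{curl})$-conforming and $\nabla\times\boldsymbol{u}_h$ must be read element by element. By contrast, the companion estimate for $\tilde{\ell}_2$ in Lemma \ref{Lem:5} will be the genuinely delicate one, since $\boldsymbol{U} = \boldsymbol{H}_0(\boldsymbol{curl};\Omega)$ and bounding $\|\tilde{\ell}_2\|_{\boldsymbol{U}^*}$ will require testing against a conforming approximation together with trace and interpolation inequalities to recover the mesh-size weights $h_\tau$ and $h_f$ appearing in \eqref{Equ:3.23}.
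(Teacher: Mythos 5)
Your proposal is correct and follows essentially the same route as the paper: both insert the definitions of $a$, $b$, $\ell_1$ into \eqref{Equ:3.28}, add and subtract the piecewise curl $\mu\nabla\times\boldsymbol{u}_h$ to expose $R_1(\boldsymbol{u}_h,\boldsymbol{p}_h)$, and conclude by the Cauchy--Schwarz inequality in $\boldsymbol{L}^2(\Omega)$ together with the triangle inequality. Your added remarks (that $\|\tilde{\ell}_1\|_{\boldsymbol{Q}^*}$ is exactly an $L^2$-norm since $\boldsymbol{Q}=\boldsymbol{L}^2(\Omega)$, and that the curls are piecewise) only make the paper's argument slightly more explicit.
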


\begin{proof}
	For any $\boldsymbol{q}\in\boldsymbol{Q}$,
	by the definition of $\tilde{\ell}_1$, we have
	\begin{equation*}
	\tilde{\ell}_1(\boldsymbol{q})
	=
	\sum_{\tau\in\mathcal{T}_h}\int_\tau \big((\mu\nabla\times \boldsymbol{u}_h- \boldsymbol{p}_h)
	+
	\mu\nabla\times (\tilde{\boldsymbol{u}}_h-\boldsymbol{u}_h)\big)\cdot \boldsymbol{q} \mathrm{d} \boldsymbol{x}.
	\end{equation*}
	Then applying the H\"{o}lder inequality and the Cauchy-Schwarz inequality,
	\begin{eqnarray*}
		\lefteqn{\vert \tilde{\ell}_1(\boldsymbol{q})\vert
			\leq
			\sum_{\tau\in\mathcal{T}_h } \| \mu\nabla\times \boldsymbol{u}_h- \boldsymbol{p}_h\|_{L^2(\tau)}\|\boldsymbol{q}\|_{L^2(\Omega)}
			+
			\sum_{\tau\in\mathcal{T}_h }\|\mu\nabla\times (\tilde{\boldsymbol{u}}_h-\boldsymbol{u}_h)\|_{L^2(\tau)} \|\boldsymbol{q}\|_{L^2(\Omega)}}\\
		& \lesssim &
		\bigg(\big( \sum_{\tau\in\mathcal{T}_h} \|R_1( \boldsymbol{u}_h, \boldsymbol{p}_h)\|^2_{L^2(\tau)} \big)^{1/2}
		+
		\big( \sum_{\tau\in\mathcal{T}_h  }\|\nabla\times (\tilde{\boldsymbol{u}}_h-\boldsymbol{u}_h)\|^2_{L^2(\tau)}\big)^{1/2}\bigg)\|\boldsymbol{q}\|_{L^2(\Omega)},
	\end{eqnarray*}
	conclude the proof.
\end{proof}

Before estimating the term $\|\tilde{\ell}_2\|_{\boldsymbol{U}^*}$, we need to introduce the following interpolation operator with  the corresponding approximations.
\begin{lemma}\label{Lem:4.1} [\cite{Sch08:633}, Theorem 1]
	Let  $Nd^1_0(\Omega; \mathcal{T}_h)$ be the lowest order edge elements of N\'ed\'elec first family. Then there exists
	an operator $\Pi_h: \boldsymbol{H}_0(curl;\Omega) \to Nd^1_0(\Omega; \mathcal{T}_h)$ with the following properties: For every $\boldsymbol{v} \in \boldsymbol{H}_0(curl;\Omega)$, there exist $\varphi \in H_0^1(\Omega) $ and $   \boldsymbol{z}\in \boldsymbol{H}_0^1(\Omega)$, such that
	\begin{eqnarray*}
		\boldsymbol{v} -\Pi_h \boldsymbol{v} =\nabla \varphi+ \boldsymbol{z}.
	\end{eqnarray*}
	And	for any  $\tau \in \mathcal{T}_h$ and $f \in \mathcal{F}_h$, we have
	\begin{eqnarray*}
		h_\tau^{-1}  \| \varphi\|_{L^2(\tau)} + \|\nabla \varphi\|_{L^2(\tau)}  \lesssim h_\tau \| \boldsymbol{v}\|_{L^2(\Omega_\tau)}, \\
		h_\tau^{-1}  \| \boldsymbol{z} \|_{L^2(\tau)} + \| \nabla \boldsymbol{z}\|_{L^2(\tau)}  \lesssim h_\tau \|\nabla\times \boldsymbol{v}\|_{L^2(\Omega_\tau)},
	\end{eqnarray*}
	where $ \Omega_\tau=\bigcup\limits_{f\in\tau} \Omega_f$,  $\Omega_f=\{ \tau' \in \mathcal{T}_h, f\in \tau'  \} $, and the constants depending on the shape regularity of the mesh.
\end{lemma}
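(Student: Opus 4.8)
The statement is the quasi-interpolation and regular-decomposition result of Schöberl \cite{Sch08:633}, so the plan is to reconstruct the two ingredients that make it work: a stable regular (Birman--Solomyak / Helmholtz-type) splitting of $\boldsymbol{H}_0(curl;\Omega)$, and a commuting smoothed projection onto the lowest-order N\'ed\'elec space. First I would record the regular decomposition: on the present Lipschitz domain every $\boldsymbol{v}\in\boldsymbol{H}_0(curl;\Omega)$ can be written as $\boldsymbol{v}=\nabla\phi+\boldsymbol{w}$ with $\phi\in H_0^1(\Omega)$ and $\boldsymbol{w}\in\boldsymbol{H}_0^1(\Omega)$, where $\nabla\times\boldsymbol{w}=\nabla\times\boldsymbol{v}$ and $\|\nabla\phi\|_{0}\lesssim\|\boldsymbol{v}\|_{0}$, $\|\boldsymbol{w}\|_{1}\lesssim\|\nabla\times\boldsymbol{v}\|_{0}$. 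This is the piece that will later separate the two right-hand sides, one controlled by $\boldsymbol{v}$ and the other by $\nabla\times\boldsymbol{v}$.

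Next I would construct $\Pi_h$ as a smoothed canonical interpolant. The canonical N\'ed\'elec interpolant $I_h^{Ned}$ is not defined on rough fields, so I would precede it by a mesh-dependent mollification $R_h$ that averages over a ball of radius $\sim h_\tau$ (shrinking and one-sided near $\partial\Omega$ to preserve the homogeneous tangential trace), and set $\Pi_h:=I_h^{Ned}R_h$. The two properties I need are: (i) $\Pi_h$ is $\boldsymbol{L}^2$-bounded and local, in the sense that $\Pi_h\boldsymbol{v}|_\tau$ depends only on $\boldsymbol{v}|_{\Omega_\tau}$; and (ii) the commuting relation $\nabla\times\Pi_h\boldsymbol{v}=\Pi_h^{\mathrm{div}}(\nabla\times\boldsymbol{v})$, where $\Pi_h^{\mathrm{div}}$ is the analogous smoothed Raviart--Thomas projection. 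Property (ii) follows because mollification commutes with $\nabla\times$ and the canonical interpolants obey the de Rham commuting diagram.

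With $\Pi_h$ in hand I would apply the regular decomposition to the error $\boldsymbol{v}-\Pi_h\boldsymbol{v}$ rather than to $\boldsymbol{v}$, which yields $\boldsymbol{v}-\Pi_h\boldsymbol{v}=\nabla\varphi+\boldsymbol{z}$ with $\varphi\in H_0^1(\Omega)$, $\boldsymbol{z}\in\boldsymbol{H}_0^1(\Omega)$ and $\|\nabla\varphi\|_{0}\lesssim\|\boldsymbol{v}-\Pi_h\boldsymbol{v}\|_{0}$, $\|\boldsymbol{z}\|_{1}\lesssim\|\nabla\times(\boldsymbol{v}-\Pi_h\boldsymbol{v})\|_{0}$. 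Using (ii), $\nabla\times(\boldsymbol{v}-\Pi_h\boldsymbol{v})$ is exactly the Raviart--Thomas quasi-interpolation error of $\nabla\times\boldsymbol{v}$, hence bounded by $\nabla\times\boldsymbol{v}$ alone; this is what ultimately puts $\nabla\times\boldsymbol{v}$, and not $\boldsymbol{v}$, on the right of the $\boldsymbol{z}$-estimate. The local bounds with the factor $h_\tau$ and the patch $\Omega_\tau$ then come from the standard first-order local approximation estimates for $R_h$ and $I_h^{Ned}$ together with the $\boldsymbol{L}^2$-stability of the regular decomposition, read element by element (the extra power of $h_\tau$ in the $\|\varphi\|_{L^2(\tau)}$ term being a Poincar\'e scaling on $\Omega_\tau$).

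The main obstacle is the construction in the second step: one must build a single operator into $Nd^1_0(\Omega;\mathcal{T}_h)$ that is simultaneously defined on all of $\boldsymbol{H}_0(curl;\Omega)$ including fields too rough for the canonical interpolant, exactly commuting with $\nabla\times$, local up to the patch $\Omega_\tau$, and tangential-trace preserving. The tension is between the smoothing needed for well-definedness and the locality together with the homogeneous boundary condition; the delicate point, handled in Schöberl's proof, is the behaviour of the mollifier near $\partial\Omega$, where its radius must degenerate so that both the commuting diagram and $\boldsymbol{v}\times\boldsymbol{n}=0$ survive. Everything after that, namely the regular splitting of the error and the element-wise scaling, is routine.
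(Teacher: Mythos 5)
This lemma is not proved in the paper at all: it is quoted, with attribution, as Theorem~1 of \cite{Sch08:633}, so there is no internal argument to compare your proposal against. Judged on its own terms, your sketch correctly identifies the two ingredients of Sch\"oberl's proof --- a regular (Birman--Solomyak type) decomposition of $\boldsymbol{H}_0(curl;\Omega)$ and a commuting, locally defined, boundary-respecting smoothed interpolation operator onto $Nd^1_0(\Omega;\mathcal{T}_h)$ --- and you are right that the behaviour of the mollifier near $\partial\Omega$ is one of the delicate points.

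There is, however, a genuine gap in the step you dismiss as ``routine''. Applying a \emph{global} regular decomposition to the error $\boldsymbol{v}-\Pi_h\boldsymbol{v}$ yields only global bounds, e.g. $\|\nabla\varphi\|_{L^2(\Omega)}\lesssim\|\boldsymbol{v}-\Pi_h\boldsymbol{v}\|_{L^2(\Omega)}$; it cannot be ``read element by element'' to produce the patchwise estimates $\|\nabla\varphi\|_{L^2(\tau)}\lesssim\|\boldsymbol{v}\|_{L^2(\Omega_\tau)}$ that the lemma asserts and that the paper actually needs in Lemma~\ref{Lem:5} (where the Cauchy--Schwarz step is carried out element by element before summing). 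Obtaining locality is the technical core of Sch\"oberl's proof: the decomposition of the error is built from \emph{local} regular decompositions on vertex patches, glued with a partition of unity subordinate to the mesh, and the commuting property is used patchwise to keep the $\boldsymbol{z}$-part controlled by $\nabla\times\boldsymbol{v}$ alone. Without that construction your argument proves a weaker, global statement. A minor further remark: the right-hand sides as transcribed in the paper carry an extra factor $h_\tau$ compared with Sch\"oberl's original (which would give a second-order bound for an $L^2$ field and cannot hold); your proposed argument would, correctly, deliver the estimates without that factor, which is also the form used later in the reliability proof.
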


\begin{lemma}\label{Lem:5}
	Let  $ (\boldsymbol{u}_h, \boldsymbol{p}_h)\in \boldsymbol{U}_h\times  \boldsymbol{Q}_h $ be the  solution of \eqref{Equ:3.16}-\eqref{Equ:3.17}, and $\tilde{\boldsymbol{u}}_h$  be the unique solution of \eqref{Equ:3.27}.
	Then the linear functional $\tilde{\ell}_2$ defined in \eqref{Equ:3.29} satisfies the following estimate
	\begin{eqnarray}\nonumber
	\lefteqn{
		\| \tilde{\ell}_2\|_{\boldsymbol{U}^*}
		\lesssim
		\bigg( \sum_{\tau\in\mathcal{T}} h^2_\tau(\|R_2( \boldsymbol{u}_h, \boldsymbol{p}_h)\|^2_{L^2(\tau)}
		+
		\|R_2( \boldsymbol{u}_h)\|^2_{L^2(\tau)})
	}
	\\  \label{Eqn:l2t}
	&&\ \ \ \
	+
	\sum_{f\in\mathcal{F} }h_f(\|J_1(\boldsymbol{p}_h)\|^2_{L^2(f)}
	+
	\|J_2(\boldsymbol{u}_h)\|^2_{L^2(f)})
	+
	\sum_{\tau\in\mathcal{T}}\| \boldsymbol{u}_h-\tilde{\boldsymbol{u}}_h \|^2_{L^2(\tau)}\bigg)^{1/2}.\ \
	\end{eqnarray}
\end{lemma}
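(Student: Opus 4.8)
The plan is to bound $\tilde{\ell}_2(\boldsymbol{v})$ for an arbitrary $\boldsymbol{v}\in\boldsymbol{U}$, then divide by $\|\boldsymbol{v}\|_{curl,\Omega}$ and take the supremum. First I would recall $\tilde{\ell}_2(\boldsymbol{v})=(\boldsymbol{f},\boldsymbol{v})-(\nabla\times\boldsymbol{v},\boldsymbol{p}_h)_{\mathcal{T}_h}-(\kappa\tilde{\boldsymbol{u}}_h,\boldsymbol{v})$ and integrate the curl term by parts element by element. Since $\boldsymbol{v}$ is tangentially continuous and $\boldsymbol{v}\times\boldsymbol{n}=0$ on $\partial\Omega$, the element boundary contributions collapse to the interior tangential jumps $J_1(\boldsymbol{p}_h)=[[\boldsymbol{p}_h]]$; adding and subtracting $\kappa\boldsymbol{u}_h$ then gives
\[
\tilde{\ell}_2(\boldsymbol{v})=\sum_{\tau\in\mathcal{T}_h}\int_\tau R_2(\boldsymbol{u}_h,\boldsymbol{p}_h)\cdot\boldsymbol{v}\,\mathrm{d}\boldsymbol{x}-\langle\boldsymbol{v},J_1(\boldsymbol{p}_h)\rangle_{\mathcal{F}_h^0}+(\kappa(\boldsymbol{u}_h-\tilde{\boldsymbol{u}}_h),\boldsymbol{v}).
\]
The last term is the consistency contribution, which the Cauchy--Schwarz inequality and the boundedness of $\kappa$ bound by $(\sum_{\tau}\|\boldsymbol{u}_h-\tilde{\boldsymbol{u}}_h\|^2_{L^2(\tau)})^{1/2}\|\boldsymbol{v}\|_{curl,\Omega}$, producing the nonconformity summand in \eqref{Eqn:l2t}.

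Next I would insert the interpolant $\Pi_h$ of Lemma \ref{Lem:4.1}. Because $\Pi_h\boldsymbol{v}\in Nd^1_0(\Omega;\mathcal{T}_h)\subset\boldsymbol{U}_h$ is conforming, its tangential jump vanishes, so $d_{2,h}(\boldsymbol{u}_h,\Pi_h\boldsymbol{v})=0$; testing \eqref{Equ:3.17} with $\Pi_h\boldsymbol{v}$ and performing the same integration by parts yields the Galerkin orthogonality $\sum_{\tau}\int_\tau R_2\cdot\Pi_h\boldsymbol{v}-\langle\Pi_h\boldsymbol{v},J_1\rangle_{\mathcal{F}_h^0}=0$. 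Subtracting this reduces the residual part to $\boldsymbol{w}:=\boldsymbol{v}-\Pi_h\boldsymbol{v}$, which Lemma \ref{Lem:4.1} decomposes as $\boldsymbol{w}=\nabla\varphi+\boldsymbol{z}$ with $\varphi\in H_0^1(\Omega)$, $\boldsymbol{z}\in\boldsymbol{H}_0^1(\Omega)$ and local approximation bounds. For the $\boldsymbol{z}$ component I would estimate $\sum_{\tau}\int_\tau R_2\cdot\boldsymbol{z}$ and $\langle\boldsymbol{z},J_1\rangle_{\mathcal{F}_h^0}$ directly, using Cauchy--Schwarz, a multiplicative trace inequality and the bounds of Lemma \ref{Lem:4.1}; this produces the $h_\tau^2\|R_2\|^2_{L^2(\tau)}$ and $h_f\|J_1\|^2_{L^2(f)}$ terms, all controlled by $\|\nabla\times\boldsymbol{v}\|_{L^2(\Omega)}\le\|\boldsymbol{v}\|_{curl,\Omega}$.

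The delicate part is the gradient component $\nabla\varphi$, and this is where I expect the main obstacle. Here I would split $R_2=(\boldsymbol{f}-\kappa\boldsymbol{u}_h)-\nabla\times\boldsymbol{p}_h$ and observe that, because $\nabla\cdot(\nabla\times\boldsymbol{p}_h)=0$ elementwise and the tangential trace of $\nabla\varphi$ is single valued across interior faces, an integration by parts of $\sum_{\tau}\int_\tau(\nabla\times\boldsymbol{p}_h)\cdot\nabla\varphi$ reproduces exactly the face expression $\langle\nabla\varphi,J_1\rangle_{\mathcal{F}_h^0}$ and cancels the surviving $J_1$ contribution on the gradient part. What remains is $\sum_{\tau}\int_\tau(\boldsymbol{f}-\kappa\boldsymbol{u}_h)\cdot\nabla\varphi$; integrating this by parts turns the volume part into the divergence residual $R_3(\boldsymbol{u}_h)=\nabla\cdot(\boldsymbol{f}-\kappa\boldsymbol{u}_h)$ tested against $\varphi$ and the interface part into the jump $J_2(\boldsymbol{u}_h)$ tested against $\varphi$. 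Bounding these with Cauchy--Schwarz, the trace inequality and the $\varphi$-bounds of Lemma \ref{Lem:4.1} yields the $h_\tau^2\|R_3\|^2_{L^2(\tau)}$ and $h_f\|J_2\|^2_{L^2(f)}$ terms, controlled by $\|\boldsymbol{v}\|_{L^2(\Omega)}\le\|\boldsymbol{v}\|_{curl,\Omega}$.

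Collecting the $\boldsymbol{z}$-part, the $\nabla\varphi$-part and the consistency term, dividing by $\|\boldsymbol{v}\|_{curl,\Omega}$ and taking the supremum over $\boldsymbol{v}\in\boldsymbol{U}$ delivers \eqref{Eqn:l2t}. The key difficulty is orchestrating the curl integration by parts on the gradient component so that the $\nabla\times\boldsymbol{p}_h$ volume term precisely annihilates the $J_1$ jump; only after this cancellation do the divergence residual $R_3$ and the jump $J_2$ of $\boldsymbol{f}-\kappa\boldsymbol{u}_h$ emerge, and tracking the $h$-powers through the Helmholtz-type decomposition is what guarantees the scaled form of the indicator in \eqref{Equ:3.23}.
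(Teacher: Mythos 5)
Your proposal is correct and rests on the same essential ingredients as the paper's proof: the quasi-interpolant $\Pi_h$ of Lemma \ref{Lem:4.1} with the decomposition $\boldsymbol{v}-\Pi_h\boldsymbol{v}=\nabla\varphi+\boldsymbol{z}$, Galerkin orthogonality from \eqref{Equ:3.17} for the jump-free conforming interpolant, and elementwise integration by parts producing $R_2$, $R_3$, $J_1$, $J_2$ together with the consistency term $\kappa(\boldsymbol{u}_h-\tilde{\boldsymbol{u}}_h)$. The one real difference is organizational, and it concerns precisely the step you single out as the main obstacle. You integrate by parts globally first and must then undo that integration on the gradient component so that the volume term $\sum_\tau\int_\tau(\nabla\times\boldsymbol{p}_h)\cdot\nabla\varphi$ cancels the face term $\langle\nabla\varphi,J_1(\boldsymbol{p}_h)\rangle$; this works, but it forces you to pair $J_1$ with the face trace of $\nabla\varphi$, which for $\varphi\in H_0^1(\Omega)$ is only a formal object and is harmless solely because the two contributions cancel identically. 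The paper instead splits $\tilde{\ell}_2$ into the three pieces of \eqref{Equ:3.31} \emph{before} any integration by parts, so that on the gradient piece $d(\nabla\varphi,\boldsymbol{p}_h)=(\nabla\times\nabla\varphi,\boldsymbol{p}_h)=0$ eliminates every $\boldsymbol{p}_h$ contribution at once and no $J_1$ term ever appears there; only the well-defined pairings of $R_3$ and $J_2$ against $\varphi$ itself remain. If you adopt that ordering, the delicate cancellation you describe disappears and the rest of your argument goes through verbatim.
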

\begin{proof}
	For any $\boldsymbol{v}\in\boldsymbol{U}$ and $\Pi_h$ given by Lemma \ref{Lem:4.1}, we have
	\begin{eqnarray}\label{Eqn:dec}
	\boldsymbol{v} -\Pi_h \boldsymbol{v} =\nabla \varphi+ \boldsymbol{z},
	\end{eqnarray}
	where $\varphi \in H_0^1(\Omega) $ and $   \boldsymbol{z}\in \boldsymbol{H}_0^1(\Omega)$.
	According to linearity of the operator $\tilde{\ell}_2$ and \eqref{Eqn:dec}, we have
	\begin{eqnarray} \label{Equ:3.31}
	\tilde{\ell}_2(\boldsymbol{v})
	=
	\tilde{\ell}_2(\Pi_h\boldsymbol{v})+\tilde{\ell}_2(\boldsymbol{v}-\Pi_h\boldsymbol{v})
	=
	\tilde{\ell}_2(\Pi_h\boldsymbol{v})
	+
	\tilde{\ell}_2( \nabla \varphi)+\tilde{\ell}_2( \boldsymbol{z} ).
	\end{eqnarray}
	We will next estimate the three terms on the right hand side of \eqref{Equ:3.31}.
	
	For the first term $\tilde{\ell}_2(\Pi_h\boldsymbol{v})$ of \eqref{Equ:3.31}, using the definition of $\tilde{\ell}_2$, we have
	\begin{eqnarray*}
		\tilde{\ell}_2(\Pi_h\boldsymbol{v})
		&=&
		\ell_2(\Pi_h\boldsymbol{v})
		-
		d(\Pi_h\boldsymbol{v}, \boldsymbol{p}_h)
		-
		c(\tilde{\boldsymbol{u}}_h, \Pi_h\boldsymbol{v})\\
		&=&
		\ell_2(\Pi_h\boldsymbol{v})
		-
		d(\Pi_h\boldsymbol{v}, \boldsymbol{p}_h)
		-
		c(\boldsymbol{u}_h, \Pi_h\boldsymbol{v})
		+
		c(\boldsymbol{u}_h-\tilde{\boldsymbol{u}}_h, \Pi_h\boldsymbol{v}).
	\end{eqnarray*}
	Noting that $\Pi_h\boldsymbol{v}\in Nd^1_0(\Omega;\mathcal{T}_h)\subseteq \boldsymbol{U}_h$ has zero jumps, and combining \eqref{Equ:3.17}, we have
	\begin{eqnarray*}
		\ell_2(\Pi_h\boldsymbol{v})
		-
		d(\Pi_h\boldsymbol{v}, \boldsymbol{p}_h)
		-
		c(\boldsymbol{u}_h, \Pi_h\boldsymbol{v})
		=
		\ell_{2,h}(\Pi_h\boldsymbol{v})
		-
		d_h(\Pi_h\boldsymbol{v}, \boldsymbol{p}_h)
		-
		c_h(\boldsymbol{u}_h, \Pi_h\boldsymbol{v})
		=0.
	\end{eqnarray*}
	Thus, we have
	\begin{eqnarray*}
		\tilde{\ell}_2(\Pi_h\boldsymbol{v})
		&=&
		c(\boldsymbol{v}_h-\tilde{\boldsymbol{u}}_h, \Pi_h\boldsymbol{v})\\
		&=&
		c(\boldsymbol{v}_h-\tilde{\boldsymbol{u}}_h, \boldsymbol{v})
		+
		c(\boldsymbol{v}_h-\tilde{\boldsymbol{u}}_h, \Pi_h\boldsymbol{v}-\boldsymbol{v})\\
		&\leq&
		\|\kappa \|_{0,\infty} \|\boldsymbol{v}_h-\tilde{\boldsymbol{u}}_h\|_{0, \mathcal{T}_h}( \|\boldsymbol{v}\|_{0, \mathcal{T}_h}
		+
		\|\Pi_h\boldsymbol{v}-\boldsymbol{v}\|_{0, \mathcal{T}_h}).
	\end{eqnarray*}
	Then using \eqref{Eqn:dec}, triangle inequality and Lemma \ref{Lem:4.1}, we get
	\begin{eqnarray}\nonumber
	\tilde{\ell}_2(\Pi_h\boldsymbol{v})
	&\leq&
	\|\kappa\|_{0,\infty}\|\boldsymbol{v}_h-\tilde{\boldsymbol{u}}_h\|_{0, \mathcal{T}_h}( \|\boldsymbol{v}\|_{0, \mathcal{T}_h}
	+
	\|\nabla \varphi+ \boldsymbol{z}\|_{0, \mathcal{T}_h})\\ \nonumber
	&\leq&
	\|\kappa \|_{0,\infty}  \| \boldsymbol{v}_h-\tilde{\boldsymbol{u}}_h\|_{0, \mathcal{T}_h}( \|\boldsymbol{v}\|_{0, \mathcal{T}_h} +\|\nabla \varphi\|_{0, \mathcal{T}_h}+  \|\boldsymbol{z}\|_{0, \mathcal{T}_h} ) \\\label{Equ:3.32}
	&\leq&
	\|\kappa \|_{0,\infty} \| \boldsymbol{v}_h-\tilde{\boldsymbol{u}}_h\|_{0, \mathcal{T}_h} \|\boldsymbol{v}\|_{curl,\mathcal{T}_h}.
	\end{eqnarray}
	
	For the second term $\tilde{\ell}_2(\nabla\varphi)$ of \eqref{Equ:3.31}, using the definition of $\tilde{\ell}_2$, \eqref{Equ:3.12}, \eqref{Equ:3.9}, \eqref{Eqn:3.10} and the fact $\nabla\times\nabla\varphi = 0$, which implies
	\begin{eqnarray}\nonumber
	\tilde{\ell}_2( \nabla \varphi)
	&=&\ell_2(  \nabla \varphi )-d( \nabla \varphi, \boldsymbol{p}_h)-c(\tilde{\boldsymbol{u}}_h, \nabla \varphi)\\ \nonumber
	&=&(\boldsymbol{f}, \nabla \varphi)- (\nabla\times  \nabla \varphi, \boldsymbol{p}_h )
	-(\kappa\tilde{\boldsymbol{u}}_h, \nabla \varphi )\\
	\label{Eqn:var}
	&=&(\boldsymbol{f}, \nabla \varphi)-(\kappa\tilde{\boldsymbol{u}}_h, \nabla \varphi ).
	\end{eqnarray}
	By \eqref{Eqn:var} and Green's formula, we have
	\begin{eqnarray*}
		\tilde{\ell}_2( \nabla \varphi)
		&=&(\boldsymbol{f}, \nabla \varphi)-  (\kappa\boldsymbol{u}_h, \nabla \varphi )
		+
		( \kappa(\boldsymbol{u}_h- \tilde{\boldsymbol{u}}_h), \nabla \varphi )\\
		& \leq &
		\sum\limits_{\tau\in \mathcal{T}_h}(R_3(\boldsymbol{u}_h), \varphi)_{0, \tau} +\sum\limits_{f\in \mathcal{F}_h} <J_2(\boldsymbol{u}_h), \varphi>_{0, f}
		+( \kappa(\boldsymbol{u}_h- \tilde{\boldsymbol{u}}_h), \nabla \varphi ).
	\end{eqnarray*}
	Applying the Cauchy-Schwarz inequality, Lemma \ref{Lem:4.1} and  trace inequality, we have
	\begin{eqnarray}\nonumber
	\tilde{\ell}_2 ( \nabla \varphi)
	\leq
	\bigg(\sum\limits_{\tau\in \mathcal{T}_h} h_\tau^2\|R_3(\boldsymbol{u}_h)\|^2_{0, \tau}
	+\sum\limits_{f\in \mathcal{F}_h}h_f \|J_2(\boldsymbol{u}_h)\|^2_{0, f} \bigg.
	\\ \label{Equ:3.34}
	\bigg.
	+
	\sum\limits_{\tau\in \mathcal{T}_h}\|\kappa \|_{0,\infty}  \|\boldsymbol{u}_h- \tilde{\boldsymbol{u}}_h\|^2_{0, \tau}
	\bigg)^{1/2} \|\boldsymbol{v}\|_{curl, \mathcal{T}_h }.
	\end{eqnarray}
	
	Similarly, for the third term $\tilde{\ell}_2(\boldsymbol{z})$ of \eqref{Equ:3.31}, we have
	\begin{eqnarray} \nonumber
	\tilde{\ell}_2( \boldsymbol{z} )
	&=&
	(\boldsymbol{f}, \boldsymbol{z}  )
	-
	(\nabla\times   \boldsymbol{z} , \boldsymbol{p}_h )
	-
	(\kappa\tilde{\boldsymbol{u}}_h, \boldsymbol{z}  )\\ \nonumber
	&=&
	(\boldsymbol{f}, \boldsymbol{z}  )
	-
	(\nabla\times   \boldsymbol{z} , \boldsymbol{p}_h )
	-
	(\kappa\boldsymbol{u}_h, \boldsymbol{z}  )
	+
	(  \kappa(\boldsymbol{u}_h-\tilde{\boldsymbol{u}}_h), \boldsymbol{z}  )\\\nonumber
	&\leq&
	\bigg(\sum\limits_{\tau\in \mathcal{T}_h} h_\tau^2\|R_2(\boldsymbol{u}_h,\boldsymbol{p}_h)\|^2_{0, \tau}
	+
	\sum\limits_{f\in \mathcal{F}_h}h_f \|J_1(\boldsymbol{p}_h)\|^2_{0, f} \bigg. \\\label{Equ:3.35}
	\bigg.
	&&
	+
	\sum\limits_{\tau\in \mathcal{T}_h}\|\kappa \|_{0,\infty} \|\boldsymbol{u}_h- \tilde{\boldsymbol{u}}_h\|^2_{0, \tau}\bigg)^{1/2} \|\boldsymbol{v}\|_{curl, \mathcal{T}_h }.
	\end{eqnarray}
	
	Substituting \eqref{Equ:3.32}, \eqref{Equ:3.34} and \eqref{Equ:3.35} into \eqref{Equ:3.31}, the proof is completed.
\end{proof}

Notice that both \eqref{Eqn:l1t} and \eqref{Eqn:l2t} are related to the terms $\sum\limits_{\tau\in\mathcal{T}_h}\|\nabla\times (\tilde{\boldsymbol{u}}_h-\boldsymbol{u}_h)\|^2_{L^2(\tau)}$ and $\sum\limits_{\tau\in\mathcal{T}}\| \boldsymbol{u}_h-\tilde{\boldsymbol{u}}_h \|^2_{L^2(\tau)}$, which are a part of $\tilde{\zeta}$. Therefore, we  prove upper bounds for $\tilde{\zeta}$ in the following Lemma.

\begin{lemma}\label{Lem:3.2}
	Let $({\boldsymbol{u}}_h, {\boldsymbol{p}}_h) \in \boldsymbol{U}_h\times \boldsymbol{Q}_h$ be the solutions of \eqref{Equ:3.16}-\eqref{Equ:3.17} and $\tilde{\zeta}$ be consistency error of \eqref{Eqn:zeta}, we have
	\begin{equation}\label{Equ:3.30}
	\tilde{\zeta}^2
	\lesssim
	\eta^2( \boldsymbol{u}_h, \boldsymbol{p}_h;\mathcal{T}_h).
	\end{equation}
\end{lemma}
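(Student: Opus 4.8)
The plan is to combine a conforming–approximation (averaging) bound with the first discrete equation \eqref{Equ:3.16}. Since $\tilde{\boldsymbol{u}}_h$ realizes the infimum in \eqref{Equ:3.27}, for \emph{any} conforming $\boldsymbol{w}\in\boldsymbol{U}$ we have
\begin{equation*}
\tilde{\zeta}^2 \leq \sum_{\tau\in\mathcal{T}_h}\big(\|\boldsymbol{u}_h-\boldsymbol{w}\|^2_{L^2(\tau)}+\|\nabla\times(\boldsymbol{u}_h-\boldsymbol{w})\|^2_{L^2(\tau)}\big).
\end{equation*}
I would take $\boldsymbol{w}$ to be the image of $\boldsymbol{u}_h$ under an Oswald–type averaging operator into the lowest–order N\'ed\'elec space, which lands in $\boldsymbol{U}=\boldsymbol{H}_0(\boldsymbol{curl};\Omega)$ because, by the constraint defining $\boldsymbol{U}_h$ together with Remark \ref{Rem:3.3}, the tangential jump $[[\boldsymbol{u}_h]]$ vanishes on every $f\in\mathcal{F}_h^\partial$. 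The element-by-element scaling of such an operator, combined with shape regularity, yields
\begin{equation*}
\tilde{\zeta}^2 \lesssim \sum_{f\in\mathcal{F}_h^0} h_f^{-1}\|[[\boldsymbol{u}_h]]\|^2_{L^2(f)},
\end{equation*}
the curl contribution (scaling like $h_f^{-1}$) dominating the $L^2$ contribution (scaling like $h_f$).

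The heart of the argument is to control the weighted jump seminorm on the right-hand side by data already present in the estimator, \emph{without} invoking a negative power of $h_f$ in $\eta$. To this end I would test \eqref{Equ:3.16} with a carefully built $\boldsymbol{q}_h\in\boldsymbol{Q}_h$. Recalling that \eqref{Equ:3.16} (with $\ell_{1,h}=0$) reads $(R_1(\boldsymbol{u}_h,\boldsymbol{p}_h),\boldsymbol{q}_h)_{\mathcal{T}_h}=-\langle\{\{\mu\boldsymbol{q}_h\}\},[[\boldsymbol{u}_h]]\rangle_{\mathcal{F}_h}$, I would assemble $\boldsymbol{q}_h$ from tangential face-bubble functions so that on each interior face $\{\{\mu\boldsymbol{q}_h\}\}|_f = h_f^{-1}[[\boldsymbol{u}_h]]|_f$, with bounded overlap of supports. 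A volume-to-face scaling then gives $\|\boldsymbol{q}_h\|^2_{L^2(\Omega)} \lesssim \sum_{f} h_f\,(h_f^{-1})^2\|[[\boldsymbol{u}_h]]\|^2_{L^2(f)}=\sum_f h_f^{-1}\|[[\boldsymbol{u}_h]]\|^2_{L^2(f)}$. Substituting this choice into the identity and applying the Cauchy--Schwarz inequality,
\begin{equation*}
\sum_{f\in\mathcal{F}_h^0} h_f^{-1}\|[[\boldsymbol{u}_h]]\|^2_{L^2(f)} = -(R_1,\boldsymbol{q}_h)_{\mathcal{T}_h} \leq \|R_1\|_{L^2(\Omega)}\|\boldsymbol{q}_h\|_{L^2(\Omega)} \lesssim \Big(\sum_{\tau\in\mathcal{T}_h}\|R_1(\boldsymbol{u}_h,\boldsymbol{p}_h)\|^2_{L^2(\tau)}\Big)^{1/2}\Big(\sum_{f\in\mathcal{F}_h^0} h_f^{-1}\|[[\boldsymbol{u}_h]]\|^2_{L^2(f)}\Big)^{1/2}.
\end{equation*}
Cancelling one factor of the jump seminorm leaves $\sum_{f} h_f^{-1}\|[[\boldsymbol{u}_h]]\|^2_{L^2(f)} \lesssim \sum_{\tau}\|R_1(\boldsymbol{u}_h,\boldsymbol{p}_h)\|^2_{L^2(\tau)}$, and the right-hand side is exactly the first contribution to $\eta^2(\boldsymbol{u}_h,\boldsymbol{p}_h;\mathcal{T}_h)$ in \eqref{Equ:3.23}.

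Chaining the two displays gives $\tilde{\zeta}^2 \lesssim \sum_{\tau}\|R_1(\boldsymbol{u}_h,\boldsymbol{p}_h)\|^2_{L^2(\tau)} \leq \eta^2(\boldsymbol{u}_h,\boldsymbol{p}_h;\mathcal{T}_h)$, which is the claim; remarkably, only the $R_1$-term of the estimator is needed. I expect the main obstacle to be the construction in the second paragraph: producing a genuine $\boldsymbol{Q}_h$-function whose weighted face averages $\{\{\mu\boldsymbol{q}_h\}\}$ reproduce $h_f^{-1}[[\boldsymbol{u}_h]]$ simultaneously on all interior faces while keeping $\|\boldsymbol{q}_h\|_{L^2(\Omega)}$ sharply controlled. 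One must respect the tangential nature of $[[\boldsymbol{u}_h]]$, absorb the piecewise-constant weight $\mu$ (bounded between $\mu_1$ and $\mu_2$) that enters the average, and carry out the bounded-overlap bookkeeping so that shape regularity alone fixes the hidden constant. By contrast, the averaging bound of the first paragraph is comparatively routine and can be quoted from standard conforming-approximation results for $\boldsymbol{H}(\boldsymbol{curl})$; its only subtlety is verifying that the averaged field retains a vanishing tangential trace on $\partial\Omega$, which is guaranteed by the boundary constraint in the definition of $\boldsymbol{U}_h$.
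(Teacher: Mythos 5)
Your first step coincides with the paper's: both use a conforming approximation of $\boldsymbol{u}_h$ together with the minimality of $\tilde{\boldsymbol{u}}_h$ in \eqref{Equ:3.27} to reduce the claim to the jump bound $\tilde{\zeta}^2\lesssim\sum_{f\in\mathcal{F}_h}h_f^{-1}\|[[\boldsymbol{u}_h]]\|^2_{L^2(f)}$ (the paper quotes Proposition 4.5 of \cite{HouPer05:485} for an operator $\mathcal{I}_h$ into $\boldsymbol{U}_h^c$ and absorbs the $h_f$-weighted $L^2$ part using $h_f<1$; your Oswald-type averaging plays the same role). The divergence, and the gap, lies in how the scaled jump is then controlled. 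The paper passes to the primal IPDG formulation via Lemma \ref{Lem:0} and invokes \eqref{Eqn:ajump} (Lemma 5 of \cite{XingZhong12}), which bounds $\alpha\|h_f^{-1/2}[[\boldsymbol{u}_h]]\|_{L^2(\mathcal{F}_h)}$ by the \emph{full} estimator $\eta(\boldsymbol{u}_h,\boldsymbol{p}_h;\mathcal{T}_h)$. You instead attempt to bound the jump by the $R_1$-contribution alone by testing \eqref{Equ:3.16} with a $\boldsymbol{q}_h\in\boldsymbol{Q}_h$ satisfying $\{\{\mu\boldsymbol{q}_h\}\}\vert_f=h_f^{-1}[[\boldsymbol{u}_h]]\vert_f$ on every interior face. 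This construction does not go through.

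Concretely: an element of $\boldsymbol{Q}_h$ is, on each tetrahedron, a single vector polynomial of degree at most $l$, so its traces on the four faces of that tetrahedron cannot be prescribed independently; matching a full tangential polynomial datum of degree $l$ on all four faces simultaneously is an over-determined problem for the low orders the paper allows, and the usual face-bubble remedy multiplies the datum by a cubic bubble and therefore leaves the space $\boldsymbol{Q}_h$, for which \eqref{Equ:3.16} no longer holds. More structurally, since $R_1(\boldsymbol{u}_h,\boldsymbol{p}_h)\in\boldsymbol{Q}_h$, the identity $(R_1,\boldsymbol{q}_h)_{\mathcal{T}_h}=-\langle\{\{\mu\boldsymbol{q}_h\}\},[[\boldsymbol{u}_h]]\rangle_{\mathcal{F}_h}$ says precisely that $R_1$ is (minus) a $\mu$-weighted global lifting of $[[\boldsymbol{u}_h]]$ in the sense of \eqref{Eqn:L}; the estimate you need, $\sum_f h_f^{-1}\|[[\boldsymbol{u}_h]]\|^2_{L^2(f)}\lesssim\sum_\tau\|R_1\|^2_{L^2(\tau)}$, is therefore a uniform \emph{lower} bound for the global lifting operator. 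Only the upper bound \eqref{Eqn:se} holds in general: the local liftings attached to the different faces of one element overlap and can cancel, so the global lifting norm does not control the scaled jump seminorm (this is the familiar reason why BR1-type lifting stabilizations are not coercive, whereas penalties of the form $\alpha h_f^{-1}\|[[\cdot]]\|^2_{L^2(f)}$ are needed). A coloring argument over faces does not rescue the construction either, because the cross-face terms it generates are of the same order as the diagonal ones. Hence the conclusion that ``only the $R_1$-term of the estimator is needed'' is unjustified; one really needs the full-estimator jump bound \eqref{Eqn:ajump}, which rests on the coercivity of the primal form $a_{IP}$ in \eqref{Equ:3.20} rather than on the first mixed equation.
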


\begin{proof}
	For any $\boldsymbol{v}_h \in \boldsymbol{U}_h $, there exit an interpolation operator  $\mathcal{I}_h :\boldsymbol{H}^1(\Omega;\mathcal{T}_h )\to \boldsymbol{U}^c_h$, such that(see Proposition 4.5 of \cite{HouPer05:485})
	\begin{eqnarray}\label{Eqn:uhIuh}
	\|\boldsymbol{v}_h- \mathcal{I}_h\boldsymbol{v}_h\|^2_{L^2(\Omega)} \lesssim \sum_{f \in \mathcal{F}_h}   h_f \| [[ \boldsymbol{v}_h]] \|^2_{L^2(f)}, \\ \label{Eqn:nablauhIu}
	\sum_{ \tau \in \mathcal{T}_h} \|\nabla \times (\boldsymbol{v}_h-\mathcal{I}_h \boldsymbol{v}_h) \|^2_{L^2(\tau)} \lesssim\sum_{f \in \mathcal{F}_h}  h_f^{-1} \| [[ \boldsymbol{v}_h]] \|^2_{L^2(f)}.
	\end{eqnarray}
	
	Then, combining \eqref{Equ:3.27}, \eqref{Eqn:zeta}, \eqref{Eqn:uhIuh}, \eqref{Eqn:nablauhIu}, and the fact $h_f<1$, we get
	\begin{eqnarray}\nonumber
	\tilde{\zeta}^2
	&=&
	\sum_{\tau\in\mathcal{T}_h}(\|\boldsymbol{u}_h- \tilde{\boldsymbol{u}}_h\|^2_{ {L}^2(\tau)}
	+
	\|\nabla\times( \boldsymbol{u}_h- \tilde{\boldsymbol{u}}_h)\|^2_{ {L}^2(\tau)})  \\ \nonumber
	\label{Eqn:7.1}
	&\leq&
	\sum_{\tau\in\mathcal{T}_h}(\|\boldsymbol{u}_h - \mathcal{I}_h \boldsymbol{u}_h\|^2_{ {L}^2(\tau)}
	+
	\|\nabla\times( \boldsymbol{u}_h - \mathcal{I}_h \boldsymbol{u}_h)\|^2_{ {L}^2(\tau)}) \\\nonumber
	&\lesssim& \sum_{f \in \mathcal{F}_h}   h_f \| [[ \boldsymbol{u}_h]] \|^2_{L^2(f)}
	+
	\sum_{f \in \mathcal{F}_h}   h_f^{-1} \| [[ \boldsymbol{u}_h]] \|^2_{L^2(f)}  \\ \label{Eqn:7.2}
	&\lesssim& \sum_{f \in \mathcal{F}_h}   h_f^{-1} \| [[ \boldsymbol{u}_h]] \|^2_{L^2(f)}.
	\end{eqnarray}
	
	Noting that $(\boldsymbol{u}_h, \boldsymbol{p}_h) \in  \boldsymbol{U}_h\times \boldsymbol{Q}_h $ is the solution of discrete variational problem \eqref{Equ:3.16}-\eqref{Equ:3.17}.
	Then by using Lemma \ref{Lem:0}, we know that $\boldsymbol{u}_h$ is the solution of discrete variational problem \eqref{Equ:3.20}. Hence, we have ( see Lemma 5 of \cite{XingZhong12})
	\begin{eqnarray}\label{Eqn:ajump}
	\alpha\| h_f^{-1/2}[[ \boldsymbol{u}_h]] \|_{L^2(\mathcal{F}_h)} \lesssim  \eta( \boldsymbol{u}_h, \boldsymbol{p}_h;\mathcal{T}_h).
	\end{eqnarray}
	
	At last, combining \eqref{Eqn:7.2} and \eqref{Eqn:ajump}, we have
	\begin{eqnarray*}	
		\tilde{\zeta}^2	&  \lesssim &\eta^2( \boldsymbol{u}_h, \boldsymbol{p}_h;\mathcal{T}).
	\end{eqnarray*}
\end{proof}

Combining Lemmas \ref{Lem:3}, \ref{Lem:4}, \ref{Lem:5} and \ref{Lem:3.2}, we will  prove Theorem \ref{Thm:1}.
\begin{proof}[ \textbf{Proof of} Theorem \ref{Thm:1}:]
	By using \eqref{Equ:3.25}, the triangle inequality, \eqref{Eqn:zeta},  Lemmas \ref{Lem:3}, \ref{Lem:4}, \ref{Lem:5}, \ref{Lem:3.2} and \eqref{Eqn:ajump}, we get
	\begin{eqnarray*}\lefteqn{
			\|(\boldsymbol{u}, \boldsymbol{p})-(\boldsymbol{u}_h, \boldsymbol{p}_h)\|^2_{DG} }\\
		&\lesssim&
		\|\boldsymbol{p}-\boldsymbol{p}_h\|^2_{L^2(\Omega)}
		+
		\|\kappa(\boldsymbol{u}-\boldsymbol{u}_h)\|^2_{L^2(\Omega)}
		\\
		&&
		+
		\sum\limits_{\tau\in\mathcal{T}_h}\|\nabla\times\mu({\boldsymbol{u}}-\boldsymbol{u}_h)\|^2_{L^2(\tau)}
		+
		\sum\limits_{f\in\mathcal{F}_h}\alpha h_f^{-1} <[[ \boldsymbol{u}_h ]]   , [[ \boldsymbol{u}_h ]]  >_{f}\\
		&\lesssim&
		\|\boldsymbol{p}-\boldsymbol{p}_h\|^2_{L^2(\Omega)}
		+
		\|\boldsymbol{u}-\tilde{\boldsymbol{u}}_h\|^2_{\boldsymbol{curl}, \Omega}
		+
		\tilde{\zeta}^2
		+
		\sum\limits_{f\in\mathcal{F}_h}\alpha h_f^{-1}  <[[ \boldsymbol{u}_h ]]   , [[ \boldsymbol{u}_h ]]  >_{f}   \\
		&=&
		\|(\boldsymbol{u}-\tilde{\boldsymbol{u}}_h, \boldsymbol{p}-\boldsymbol{p}_h)\|_{\boldsymbol{U}\times \boldsymbol{Q}}
		+
		\tilde{\zeta}^2
		+
		\sum\limits_{f\in\mathcal{F}_h}\alpha h_f^{-1}  <[[ \boldsymbol{u}_h ]]   , [[ \boldsymbol{u}_h ]]  >_{f}  \\
		&\lesssim&
		\| \tilde{\ell}_1\|^2_{\mathbf {Q}^*}
		+
		\|\tilde{\ell}_2\|^2_{\boldsymbol{U}^*}
		+
		\tilde{\zeta}^2
		+
		\sum\limits_{f\in\mathcal{F}_h}\alpha h_f^{-1} <[[ \boldsymbol{u}_h ]]   , [[ \boldsymbol{u}_h ]]  >_{f} \\
		&\leq&    C_1 \eta^2( \boldsymbol{u}_h, \boldsymbol{p}_h;\mathcal{T}_h).
	\end{eqnarray*}
\end{proof}

\subsection{The error reduces on two successive meshes}

For convenience, for any $\boldsymbol{v}\in\boldsymbol{U}$ and $\boldsymbol{v}_h\in\boldsymbol{U}_h$, we denote
\begin{eqnarray}  \nonumber
\|\vert\boldsymbol{v}-\boldsymbol{v}_h \vert\|_h^2
&=&
\|\kappa(\boldsymbol{v}-\boldsymbol{v}_h)\|^2_{L^2(\Omega)}
+
\sum\limits_{\tau\in\mathcal{T}_h} \|\nabla\times\mu(\boldsymbol{v}-\boldsymbol{v}_h)\|^2_{L^2(\tau)} \\\label{Equ:4.0}
&&+
\sum\limits_{f\in\mathcal{F}_h} \alpha h_f^{-1}  <[[  \boldsymbol{v}_h ]]   , [[ \boldsymbol{v}_h ]]  >_{f}.
\end{eqnarray}

Let $\boldsymbol{U}^{c}_h$ be the $\boldsymbol{H}(\boldsymbol{curl})$ conforming subspace  of  $\boldsymbol{U}_h$ given by
$$
\boldsymbol{U}^{c}_h :=  \boldsymbol{U}_h \cap \mathbf {H}_0(curl;\Omega).
$$
Then, there is a subspace $\boldsymbol{U}^{\bot}_h$ which can orthogonally decompose $\boldsymbol{U}_h $ under $\boldsymbol{L}^2$ inner product such that
$\boldsymbol{U}_h :=\boldsymbol{U}^{c}_h \oplus \boldsymbol{U}^{\bot}_h$. Especially, if  $(\boldsymbol{u}_h, \boldsymbol{p}_h) \in  \boldsymbol{U}_h\times \boldsymbol{Q}_h $ is the  solution of  \eqref{Equ:3.16}-\eqref{Equ:3.17}, then we have
\begin{equation}\label{Eqn:bot}
\|\vert\boldsymbol{u}_h^\bot \vert\|^2_{h}
\lesssim
\alpha  \sum\limits_{f \in \partial\tau } \|h_f^{-1/2} [[ \boldsymbol{u}_h]] \|^2_{L^2(f)}.
\end{equation}
In fact, from the Lemma \ref{Lem:0}, notice that $\boldsymbol{u}_h$ satisfies the IPDG scheme of \eqref{Equ:3.20}, and
according to Lemma 2 in \cite{XingZhong12}, we can obtain \eqref{Eqn:bot}.

In order to easily estimate the jump term of face $\mathcal{F}_h$, we need to introduce the lifting operators and the corresponding stability estimates, more details are referenced to Proposition 12 in \cite{PerSch02:4675}.

Let $\mathcal{L}_h:\boldsymbol{H}^1(\Omega;\mathcal{T}_h )\to \boldsymbol{U}_h $ be the lifting operators, which satisfies the following equality
\begin{equation}\label{Eqn:L}
\int_\Omega \mathcal{L}_h(\boldsymbol{v} )\cdot \boldsymbol{w} \mathrm{d}x=<[[ \boldsymbol{v}]] , \{\{  \boldsymbol{w}  \}\}  >_{\mathcal{F}_h}, \quad  \forall \boldsymbol{w} \in \boldsymbol{U}_h,
\end{equation}
and
\begin{equation}\label{Eqn:se}
\| \mathcal{L}_h(\boldsymbol{v} )\|_{L^2(\Omega)} \leq C_{\mathcal{L}}\| h^{-1/2} [[ \boldsymbol{v}]] \|_{L^2(\mathcal{F}_h)},
\end{equation}
where the constant $C_{\mathcal{L}}$ depending on the shape regularity of mesh $\mathcal{T}_h$ and the degree of polynomial $l$.

\begin{lemma}\label{Lem:add1}
	Let $(\boldsymbol{u}, \boldsymbol{p})\in \mathbf {U}\times \boldsymbol{Q}$ and $(\boldsymbol{u}_h, \boldsymbol{p}_h) \in \boldsymbol{U}_h\times \boldsymbol{Q}_h$ be the solutions of  \eqref{Equ:3.6}-\eqref{Equ:3.7}  and   \eqref{Equ:3.16}-\eqref{Equ:3.17}, respectively, we have
	\begin{eqnarray} \label{Equ:add1}
	\|\boldsymbol{p}-\boldsymbol{p}_h \|_{L^2(\Omega)}
	&\lesssim&
	\| \nabla \times ( \boldsymbol{u}-\boldsymbol{u}_h) \|_{L^2(\Omega)}+ \eta( \boldsymbol{u}_{h}, \boldsymbol{p}_{h};\mathcal{T}_{h}),\\\nonumber
	\|\boldsymbol{p}_h-\boldsymbol{p}_H \|_{L^2(\Omega)}
	&\lesssim&
	\| \nabla \times ( \boldsymbol{u}_h-\boldsymbol{u}_H) \|_{L^2(\Omega)}
	\\
	&&+
	\bigg(\eta( \boldsymbol{u}_{h}, \boldsymbol{p}_{h};\mathcal{T}_{h})+ \eta( \boldsymbol{u}_{H}, \boldsymbol{p}_{H};\mathcal{T}_{H})\bigg).
	\end{eqnarray}
\end{lemma}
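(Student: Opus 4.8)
The plan is to exploit the exact identity $\boldsymbol{p}=\mu\nabla\times\boldsymbol{u}$ satisfied by the continuous solution, together with the observation that the first element residual $R_1(\boldsymbol{v}_h,\boldsymbol{q}_h)=\boldsymbol{q}_h-\mu\nabla\times\boldsymbol{v}_h$ measures precisely the failure of the analogous relation at the discrete level. Indeed, taking $\ell_1\equiv0$ in \eqref{Equ:3.6} and recalling \eqref{Equ:3.8}-\eqref{Equ:3.9} gives $(\boldsymbol{p}-\mu\nabla\times\boldsymbol{u},\boldsymbol{q})=0$ for all $\boldsymbol{q}\in\boldsymbol{Q}=\boldsymbol{L}^2(\Omega)$, hence $\boldsymbol{p}=\mu\nabla\times\boldsymbol{u}$ pointwise. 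On the discrete side, by the definition of $R_1$ and of the estimator \eqref{Equ:3.23}-\eqref{Equ:3.24}, and since $\|R_1\|^2_{L^2(\tau)}$ enters $\eta^2(\cdot;\tau)$ with unit weight, one has the elementary bound
\begin{equation*}
\big(\sum_{\tau\in\mathcal{T}_h}\|\boldsymbol{p}_h-\mu\nabla\times\boldsymbol{u}_h\|^2_{L^2(\tau)}\big)^{1/2}=\big(\sum_{\tau\in\mathcal{T}_h}\|R_1(\boldsymbol{u}_h,\boldsymbol{p}_h)\|^2_{L^2(\tau)}\big)^{1/2}\leq\eta(\boldsymbol{u}_h,\boldsymbol{p}_h;\mathcal{T}_h).
\end{equation*}

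For the first estimate I would insert $\mu\nabla\times\boldsymbol{u}_h$ and use the triangle inequality,
\begin{equation*}
\|\boldsymbol{p}-\boldsymbol{p}_h\|_{L^2(\Omega)}=\|\mu\nabla\times\boldsymbol{u}-\boldsymbol{p}_h\|_{L^2(\Omega)}\leq\|\mu\nabla\times(\boldsymbol{u}-\boldsymbol{u}_h)\|_{L^2(\Omega)}+\|\mu\nabla\times\boldsymbol{u}_h-\boldsymbol{p}_h\|_{L^2(\Omega)},
\end{equation*}
where all curls of $\boldsymbol{u}_h$ are taken element by element (Remark \ref{Rem:c}). The first term is controlled by $\mu_2\|\nabla\times(\boldsymbol{u}-\boldsymbol{u}_h)\|_{L^2(\Omega)}$ using $\mu<\mu_2$, and the second term is exactly the $R_1$ bound above; this yields the first inequality of \eqref{Equ:add1}.

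For the second estimate I would proceed analogously, now adding and subtracting both $\mu\nabla\times\boldsymbol{u}_h$ and $\mu\nabla\times\boldsymbol{u}_H$,
\begin{eqnarray*}
\|\boldsymbol{p}_h-\boldsymbol{p}_H\|_{L^2(\Omega)}&\leq&\|\boldsymbol{p}_h-\mu\nabla\times\boldsymbol{u}_h\|_{L^2(\Omega)}+\|\mu\nabla\times(\boldsymbol{u}_h-\boldsymbol{u}_H)\|_{L^2(\Omega)}\\
&&+\|\mu\nabla\times\boldsymbol{u}_H-\boldsymbol{p}_H\|_{L^2(\Omega)}.
\end{eqnarray*}
The first and third terms are the $R_1$-estimator bounds on $\mathcal{T}_h$ and $\mathcal{T}_H$, giving $\eta(\boldsymbol{u}_h,\boldsymbol{p}_h;\mathcal{T}_h)$ and $\eta(\boldsymbol{u}_H,\boldsymbol{p}_H;\mathcal{T}_H)$ respectively, while $\mu<\mu_2$ bounds the middle term by a constant times $\|\nabla\times(\boldsymbol{u}_h-\boldsymbol{u}_H)\|_{L^2(\Omega)}$.

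The point requiring care — the only, and rather mild, obstacle — is the consistent interpretation of the broken curls across the two nested meshes. Since $\mathcal{T}_H\leq\mathcal{T}_h$, every $\tau\in\mathcal{T}_h$ lies inside a single element of $\mathcal{T}_H$, on which $\boldsymbol{u}_H$ and $\boldsymbol{p}_H$ are polynomials; hence the element-wise curl of $\boldsymbol{u}_H$ computed on $\mathcal{T}_H$ coincides with the one computed on the finer $\mathcal{T}_h$, and likewise $\sum_{\tau\in\mathcal{T}_H}\|R_1(\boldsymbol{u}_H,\boldsymbol{p}_H)\|^2_{L^2(\tau)}=\sum_{\tau\in\mathcal{T}_h}\|R_1(\boldsymbol{u}_H,\boldsymbol{p}_H)\|^2_{L^2(\tau)}$. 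This identification is what allows the middle term to be written as a single broken curl $\mu\nabla\times(\boldsymbol{u}_h-\boldsymbol{u}_H)$ on $\mathcal{T}_h$ and the third term to be identified with $\eta(\boldsymbol{u}_H,\boldsymbol{p}_H;\mathcal{T}_H)$. With these identifications, both inequalities follow purely from the triangle inequality and the uniform bound $\mu<\mu_2$.
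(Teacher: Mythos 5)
Your proof is correct, and for the second inequality it takes a genuinely shorter route than the paper. For the first bound the two arguments coincide in substance: the paper writes $\|\boldsymbol{p}-\boldsymbol{p}_h\|_{L^2}$ as a supremum of $(\boldsymbol{p}-\boldsymbol{p}_h,\boldsymbol{q})/\|\boldsymbol{q}\|$ and then inserts $\boldsymbol{p}=\mu\nabla\times\boldsymbol{u}$ and $\boldsymbol{p}_h=R_1(\boldsymbol{u}_h,\boldsymbol{p}_h)+\mu\nabla\times\boldsymbol{u}_h$, which is exactly your decomposition $\boldsymbol{p}-\boldsymbol{p}_h=\mu\nabla\times(\boldsymbol{u}-\boldsymbol{u}_h)-R_1(\boldsymbol{u}_h,\boldsymbol{p}_h)$ dressed in duality form. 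For the second bound the paper treats the two levels asymmetrically: it keeps $\boldsymbol{p}_H$ via the residual $R_1(\boldsymbol{u}_H,\boldsymbol{p}_H)$ but replaces $(\boldsymbol{p}_h,\boldsymbol{q}_h)$ using the discrete equation \eqref{Equ:3.16}, which brings in the face term $<\{\{\boldsymbol{q}_h\}\},[[\mu\boldsymbol{u}_h]]>_{\mathcal{F}_h}$; this is then controlled through the lifting operator \eqref{Eqn:L}--\eqref{Eqn:se}, the identity $[[\boldsymbol{u}_h]]=[[\boldsymbol{u}_h^{\bot}]]$ and the jump bound \eqref{Eqn:ajump}, all of which contribute the $\eta(\boldsymbol{u}_h,\boldsymbol{p}_h;\mathcal{T}_h)$ term. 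You instead apply $R_1$ symmetrically on both meshes and use only the triangle inequality, so the fine-mesh estimator enters directly through $\|R_1(\boldsymbol{u}_h,\boldsymbol{p}_h)\|$ rather than through the jump of $\boldsymbol{u}_h$; this avoids the lifting-operator machinery entirely and is, if anything, cleaner. Your attention to the nestedness point (broken curls and residuals of $\boldsymbol{u}_H,\boldsymbol{p}_H$ agree whether computed elementwise on $\mathcal{T}_H$ or on the refinement $\mathcal{T}_h$) is exactly the detail needed to make the symmetric treatment legitimate, and both routes deliver the constants claimed in \eqref{Equ:add1}.
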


\begin{proof}
	Noting that $\boldsymbol{Q}_h \subseteq \boldsymbol{Q}$, and using \eqref{Equ:3.6}, the definition of $R_1(\boldsymbol{u}_h, \boldsymbol{p}_h)$ and  \eqref{Equ:3.23}, we have
	\begin{eqnarray*}
		\|\boldsymbol{p}-\boldsymbol{p}_h\|_{L^2(\mathcal{T}_h)}
		&\leq&
		\sup\limits_{\forall \boldsymbol{q} \in  \boldsymbol{Q}   } \frac{(\boldsymbol{p}-\boldsymbol{p}_h, \boldsymbol{q} )_{\mathcal{T}_h}}{\|\boldsymbol{q} \|_{L^2(\mathcal{T}_h)}} \\
		&=&
		\sup\limits_{\forall \boldsymbol{q} \in  \boldsymbol{Q}   } \frac{(\mu\nabla \times \boldsymbol{u}, \boldsymbol{q} )_{\mathcal{T}_h} -\big(R_1(\boldsymbol{u}_h, \boldsymbol{p}_h)+ \mu\nabla \times \boldsymbol{u}_h, \boldsymbol{q}\big)_{\mathcal{T}_h}                }{\|\boldsymbol{q} \|_{L^2(\mathcal{T}_h)}} \\
		&\leq&
		\sup\limits_{\forall \boldsymbol{q} \in  \boldsymbol{Q}  }  \frac{(\mu\nabla \times (\boldsymbol{u}-\boldsymbol{u}_h), \boldsymbol{q})_{\mathcal{T}_h}- \big(R_1(\boldsymbol{u}_h, \boldsymbol{p}_h), \boldsymbol{q}\big)_{\mathcal{T}_h}}{\|\boldsymbol{q}\|_{L^2(\mathcal{T}_h)}} \\
		&\lesssim&
		\| \nabla \times (\boldsymbol{u}-\boldsymbol{u}_h)\|_{L^2(\mathcal{T}_h)}+\eta( u_{h}, \boldsymbol{p}_{h};\mathcal{T}_{h}).
	\end{eqnarray*}
	Similarly,  using the definition of $R_1(\boldsymbol{u}_h, \boldsymbol{p}_h)$,   \eqref{Equ:3.16}, \eqref{Eqn:bot}-\eqref{Eqn:se}, and the fact $[[\boldsymbol{u}_h]] = [[\boldsymbol{u}_h^c + \boldsymbol{u}_h^{\bot}]] = [[\boldsymbol{u}_h^{\bot}]]$, we have
	\begin{eqnarray*}
		\lefteqn{\|\boldsymbol{p}_h-\boldsymbol{p}_H \|_{L^2(\mathcal{T}_h)}
			\leq\sup\limits_{\forall \boldsymbol{q}_h \in \boldsymbol{Q}_h} \frac{(\boldsymbol{p}_h-\boldsymbol{p}_H, \boldsymbol{q}_h )_{\mathcal{T}_h}}{\|\boldsymbol{q}_h \|_{L^2(\mathcal{T}_h)}} }\\
		&\leq&
		\sup\limits_{\forall \boldsymbol{q}_h \in \boldsymbol{Q}_h} \frac{(\boldsymbol{p}_h, \boldsymbol{q}_h )_{\mathcal{T}_h}- \big(R_1(\boldsymbol{u}_H, \boldsymbol{p}_H)+ \mu\nabla \times \boldsymbol{u}_H, \boldsymbol{q}_h\big)_{\mathcal{T}_h} }{\|\boldsymbol{q}_h \|_{L^2(\mathcal{T}_h)}} \\
		&\leq&
		\sup\limits_{\forall \boldsymbol{q}_h \in \boldsymbol{Q}_h}  \frac{(\mu\nabla \times \boldsymbol{u}_h, \boldsymbol{q}_h)_{\mathcal{T}_h}+ < \{\{ \boldsymbol{q}_h  \}\} , [[\mu  \boldsymbol{u}_h ]]  >_{\mathcal{F}_h}- \big(R_1(\boldsymbol{u}_H, \boldsymbol{p}_H)+ \mu\nabla \times \boldsymbol{u}_H, \boldsymbol{q}_h\big)_{\mathcal{T}_h}}{\|\boldsymbol{q}_h \|_{L^2(\mathcal{T}_h)}} \\
		&=&
		\sup\limits_{\forall \boldsymbol{q}_h \in \boldsymbol{Q}_h}  \frac{(\mu\nabla \times (\boldsymbol{u}_h-\boldsymbol{u}_H ), \boldsymbol{q}_h)_{\mathcal{T}_h}+ < \{\{ \boldsymbol{q}_h  \}\} , [[\mu  \boldsymbol{u}_h ]]  >_{\mathcal{F}_h}- \big(R_1(\boldsymbol{u}_H, \boldsymbol{p}_H), \boldsymbol{q}_h\big)_{\mathcal{T}_h} }{\|\boldsymbol{q}_h \|_{L^2(\mathcal{T}_h)}} \\
		& \lesssim&
		\| \nabla \times (\boldsymbol{u}_h-\boldsymbol{u}_H)\|_{L^2(\mathcal{T}_h)} + \| h_\tau^{-1/2}[[  \boldsymbol{u}_h ]]  \|_{L^2(\mathcal{T}_h)}+ \eta( u_{H}, \boldsymbol{p}_{H};\mathcal{T}_{H})\\
		& \lesssim &
		\| \nabla \times (\boldsymbol{u}_h-\boldsymbol{u}_H)\|_{L^2(\mathcal{T}_h)} +
		C_{\mathcal{L}}\| h_\tau^{-1/2}[[  \boldsymbol{u}_h^\bot ]]  \|_{L^2(\mathcal{T}_h)}+\eta( u_{H}, \boldsymbol{p}_{H};\mathcal{T}_{H})
		\\
		&  \lesssim &
		\| \nabla \times ( \boldsymbol{u}_h-\boldsymbol{u}_H) \|_{L^2(\tau)}+ \bigg(\eta( u_{h}, \boldsymbol{p}_{h};\mathcal{T}_{h})+ \eta( u_{H}, \boldsymbol{p}_{H};\mathcal{T}_{H})\bigg) .
	\end{eqnarray*}
\end{proof}
\begin{remark} \label{Rem:4}
	Noting that  $\|(\boldsymbol{u}, \boldsymbol{p})-(\boldsymbol{u}_h, \boldsymbol{p}_h)\|^2_{DG} +\eta^2( \boldsymbol{u}_{h}, \boldsymbol{p}_{h};\mathcal{T}_{h})$ and
	$ \|\vert\boldsymbol{u}-\boldsymbol{u}_h \vert\|_h^2 +\eta^2( \boldsymbol{u}_{h}, \boldsymbol{p}_{h};\mathcal{T}_{h})$ are equivalent. In fact,
	by \eqref{Equ:add1}, we first know that
	\begin{eqnarray*}
		\lefteqn{
			\|(\boldsymbol{u}, \boldsymbol{p})-(\boldsymbol{u}_h, \boldsymbol{p}_h)\|^2_{DG} +\eta^2( \boldsymbol{u}_{h}, \boldsymbol{p}_{h};\mathcal{T}_{h})
		}\\
		&&\quad
		=
		\|\vert\boldsymbol{u}-\boldsymbol{u}_h \vert\|_h^2
		+
		\|\boldsymbol{p}-\boldsymbol{p}_h \|^2_{L^2(\mathcal{T}_h)}
		+
		\eta^2( \boldsymbol{u}_{h}, \boldsymbol{p}_{h};\mathcal{T}_{h})\\
		&&
		\quad  \lesssim  \|\vert\boldsymbol{u}-\boldsymbol{u}_h \vert\|_h^2 +\eta^2( \boldsymbol{u}_{h}, \boldsymbol{p}_{h};\mathcal{T}_{h}).
	\end{eqnarray*}
	Secondly, it is shown by the definition of $\|\cdot\|_{DG}$
	\begin{equation*}
	\|\vert\boldsymbol{u}-\boldsymbol{u}_h \vert\|_h^2 +\eta^2( \boldsymbol{u}_{h}, \boldsymbol{p}_{h};\mathcal{T}_{h}) \leq \|(\boldsymbol{u}, \boldsymbol{p})-(\boldsymbol{u}_h, \boldsymbol{p}_h)\|^2_{DG} +\eta^2( \boldsymbol{u}_{h}, \boldsymbol{p}_{h};\mathcal{T}_{h}).
	\end{equation*}
	Thus, we next only  need to consider  the convergence of  $ \|\vert\boldsymbol{u}-\boldsymbol{u}_h \vert\|_h^2 +\eta^2( \boldsymbol{u}_{h}, \boldsymbol{p}_{h};\mathcal{T}_{h})$.
\end{remark}

We first show that the error plus some quantity reduces with a fixed factor on two successive meshes.
\begin{lemma}\label{lem:tsm}
	Given $\boldsymbol{f}\in \boldsymbol{L}^2(\Omega)$ and two tetrahedral mesh $\mathcal{T}_h$ and $\mathcal{T}_{H}$, where $\mathcal{T}_{H}\leq\mathcal{T}_{h}$.
	Let $(\boldsymbol{u}, \boldsymbol{p})\in \boldsymbol{U}\times  \boldsymbol{Q}$ be the solution of \eqref{Equ:3.6}-\eqref{Equ:3.7}, and $(\boldsymbol{u}_h, \boldsymbol{p}_h) \in  \boldsymbol{U}_h\times \boldsymbol{Q}_h $, $(\boldsymbol{u}_{H}, \boldsymbol{p}_{H})\in \boldsymbol{U}_{H} \times  \boldsymbol{Q}_{H}$ be the solutions of \eqref{Equ:3.16}-\eqref{Equ:3.17}, respectively. Then there exit two constants $ \delta_1 , \delta_2 \in(0,1)$, such that
	\begin{eqnarray} \nonumber
	\|\vert\boldsymbol{u}-\boldsymbol{u}_h \vert\|_h^2
	&\leq&
	( 1+\delta_1) \|\vert  \boldsymbol{u}-\boldsymbol{u}_H \vert\|_H^2 -\frac{1-\delta_2}{2} \|\vert\boldsymbol{u}_h -\boldsymbol{u}_H \vert\|_h^2\\
	&&+
	\frac{C_3}{ \delta_1\delta_2 \alpha} \bigg(\eta^2( \boldsymbol{u}_{h}, \boldsymbol{p}_{h};\mathcal{T}_{h})+ \eta^2( \boldsymbol{u}_{H}, \boldsymbol{p}_{H};\mathcal{T}_{H})\bigg). \label{Equ:3.30.1}
	\end{eqnarray}
	where $C_3$ depending on the $C_\mathcal{L}$.
\end{lemma}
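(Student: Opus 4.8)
The plan is to separate the two volume contributions of the energy norm \eqref{Equ:4.0} from the penalty (jump) contribution, and to treat the volume part by a Pythagoras-type identity whose cross term is precisely the consistency defect caused by nonconformity. Throughout I write $\boldsymbol{e}_h=\boldsymbol{u}-\boldsymbol{u}_h$, $\boldsymbol{e}_H=\boldsymbol{u}-\boldsymbol{u}_H$, $\boldsymbol{w}=\boldsymbol{u}_h-\boldsymbol{u}_H\in\boldsymbol{U}_h$ (recall $\boldsymbol{U}_H\subseteq\boldsymbol{U}_h$ because the meshes are nested), and I let $|\cdot|_c$ denote the volume part of $\|\vert\cdot\vert\|$, i.e. the first two terms of \eqref{Equ:4.0}; this is induced by a genuine inner product $(\cdot,\cdot)_c$, and it is mesh-independent for the functions at hand since $\boldsymbol{u}$ is $\boldsymbol{H}(\boldsymbol{curl})$-conforming and $\mu,\kappa$ are piecewise constant on $\mathcal{T}_0$. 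First I would record that all penalty terms are controlled by the indicators: by \eqref{Eqn:ajump} the jump part of $\|\vert\boldsymbol{u}-\boldsymbol{u}_h\vert\|_h^2$ is $\lesssim\eta^2(\boldsymbol{u}_h,\boldsymbol{p}_h;\mathcal{T}_h)/\alpha$, and likewise on $\mathcal{T}_H$, while $|\boldsymbol{e}_H|_c^2\le\|\vert\boldsymbol{u}-\boldsymbol{u}_H\vert\|_H^2$. This reduces the claim to an estimate for the volume part $|\boldsymbol{e}_h|_c^2$.

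Second, I would expand about the intermediate solution,
\[
|\boldsymbol{e}_h|_c^2=|\boldsymbol{e}_H|_c^2-|\boldsymbol{w}|_c^2-2(\boldsymbol{e}_h,\boldsymbol{w})_c ,
\]
which follows from $\boldsymbol{e}_h=\boldsymbol{e}_H-\boldsymbol{w}$ together with the identity $(\boldsymbol{e}_H,\boldsymbol{w})_c=(\boldsymbol{e}_h,\boldsymbol{w})_c+|\boldsymbol{w}|_c^2$. The favourable term $-|\boldsymbol{w}|_c^2$ is exactly what I want to turn into $-\frac{1-\delta_2}{2}\|\vert\boldsymbol{u}_h-\boldsymbol{u}_H\vert\|_h^2$: using $\|\vert\boldsymbol{u}_h-\boldsymbol{u}_H\vert\|_h^2=|\boldsymbol{w}|_c^2+(\text{jump of }\boldsymbol{w})$ and bounding the jump of $\boldsymbol{w}=\boldsymbol{u}_h-\boldsymbol{u}_H$ by $\eta^2(\boldsymbol{u}_h,\boldsymbol{p}_h;\mathcal{T}_h)+\eta^2(\boldsymbol{u}_H,\boldsymbol{p}_H;\mathcal{T}_H)$ through \eqref{Eqn:ajump}, this conversion costs only indicator terms. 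The whole difficulty therefore concentrates in the defect $(\boldsymbol{e}_h,\boldsymbol{w})_c$, which would vanish for a conforming Galerkin method but here does not.

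Third, to bound the defect I would use the orthogonal splitting $\boldsymbol{w}=\boldsymbol{w}^c+\boldsymbol{w}^\perp$ with $\boldsymbol{w}^c\in\boldsymbol{U}^{c}_h\subseteq\boldsymbol{U}$ and $\boldsymbol{w}^\perp\in\boldsymbol{U}^{\bot}_h$. For the conforming component, testing the continuous problem \eqref{Equ:3.6}-\eqref{Equ:3.7} and the discrete scheme \eqref{Equ:3.20} with $\boldsymbol{w}^c$ and subtracting (the two penalty terms drop since $[[\boldsymbol{w}^c]]=0$) yields a Galerkin-type identity expressing $(\boldsymbol{e}_h,\boldsymbol{w}^c)_c$, up to the bounds on $\mu,\kappa$, through the single face term $-\langle\{\{\mu\nabla\times\boldsymbol{w}^c\}\},[[\boldsymbol{u}_h]]\rangle_{\mathcal{F}_h}$; a discrete trace/inverse inequality on $\boldsymbol{w}^c$ and the stability of the decomposition, together with \eqref{Eqn:ajump}, bound this by $C\alpha^{-1}\eta(\boldsymbol{u}_h,\boldsymbol{p}_h;\mathcal{T}_h)\,|\boldsymbol{w}|_c$ (up to lower order indicator terms), which a Young inequality with parameter $\delta_2$ turns into $\frac{\delta_2}{4}|\boldsymbol{w}|_c^2+C\delta_2^{-1}\alpha^{-1}\eta^2(\boldsymbol{u}_h,\boldsymbol{p}_h;\mathcal{T}_h)$. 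For the nonconforming component I would use Cauchy--Schwarz, $|(\boldsymbol{e}_h,\boldsymbol{w}^\perp)_c|\le|\boldsymbol{e}_h|_c\,|\boldsymbol{w}^\perp|_c$, bound $|\boldsymbol{w}^\perp|_c$ by the jumps of $\boldsymbol{u}_h$ and $\boldsymbol{u}_H$ via \eqref{Eqn:bot} and \eqref{Eqn:se} (hence by $(\eta^2(\boldsymbol{u}_h,\boldsymbol{p}_h;\mathcal{T}_h)+\eta^2(\boldsymbol{u}_H,\boldsymbol{p}_H;\mathcal{T}_H))/\alpha$), and split with a Young inequality with parameter $\delta_1$, the $|\boldsymbol{e}_h|_c^2$ part being absorbed into the left-hand side.

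Finally, collecting the three estimates and moving the absorbed fraction of $|\boldsymbol{e}_h|_c^2$ to the left multiplies $|\boldsymbol{e}_H|_c^2\le\|\vert\boldsymbol{u}-\boldsymbol{u}_H\vert\|_H^2$ by the factor $1+\delta_1$ and leaves $-\frac{1-\delta_2}{2}\|\vert\boldsymbol{u}_h-\boldsymbol{u}_H\vert\|_h^2$, while every remaining term is an indicator contribution gathered into $\frac{C_3}{\delta_1\delta_2\alpha}\big(\eta^2(\boldsymbol{u}_h,\boldsymbol{p}_h;\mathcal{T}_h)+\eta^2(\boldsymbol{u}_H,\boldsymbol{p}_H;\mathcal{T}_H)\big)$, with $C_3$ depending on $C_{\mathcal{L}}$ through \eqref{Eqn:se}-\eqref{Eqn:bot}; this is \eqref{Equ:3.30.1}. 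I expect the main obstacle to be the defect $(\boldsymbol{e}_h,\boldsymbol{w})_c$: producing the correct negative sign in front of $\|\vert\boldsymbol{u}_h-\boldsymbol{u}_H\vert\|_h^2$ forces one to extract it from the algebraic identity rather than from a direct Young inequality, and then to control the nonconformity through the decomposition $\boldsymbol{U}_h=\boldsymbol{U}^{c}_h\oplus\boldsymbol{U}^{\bot}_h$ and the lifting stability \eqref{Eqn:se}, \eqref{Eqn:bot}, since the weights $\mu^2,\kappa^2$ in the energy norm prevent $(\cdot,\cdot)_c$ from coinciding with the interior-penalty form and only the conforming component $\boldsymbol{w}^c$ enjoys an exact Galerkin relation.
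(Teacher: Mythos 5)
Your proposal follows essentially the same route as the paper: a Pythagoras-type expansion of the energy error about an intermediate quantity, the splitting $\boldsymbol{U}_h=\boldsymbol{U}^{c}_h\oplus\boldsymbol{U}^{\bot}_h$, the Galerkin-type identity obtained by testing \eqref{Equ:4.7} and \eqref{Equ:3.20} with the conforming component (whose face consistency term is controlled by the lifting operator \eqref{Eqn:L}--\eqref{Eqn:se}), control of the nonconforming remainder through \eqref{Eqn:bot} and \eqref{Eqn:ajump}, and two Young inequalities producing $\delta_1$ and $\delta_2$. The only difference is cosmetic: you expand about the full difference $\boldsymbol{u}_h-\boldsymbol{u}_H$ and then split the cross term, whereas the paper expands directly about the conforming difference $\boldsymbol{u}_h^c-\boldsymbol{u}_H^c$; the two rearrangements are algebraically equivalent, and your explicit remark on the mismatch between the weights in \eqref{Equ:4.0} and those in the bilinear form is, if anything, more careful than the paper's treatment.
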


\begin{proof}
	Choosing that $\boldsymbol{q}=\nabla\times \boldsymbol{v}$, and subtracting \eqref{Equ:3.6} from \eqref{Equ:3.7}, we obtain
	\begin{equation} \label{Equ:4.7}
	(\kappa\boldsymbol{u} , \boldsymbol{v}) +( \mu\nabla\times \boldsymbol{u} , \nabla\times \boldsymbol{v} ) =(\boldsymbol{f}, \boldsymbol{v}).
	\end{equation}
	Subtracting \eqref{Equ:3.20} from \eqref{Equ:4.7} with  $\boldsymbol{v} =\boldsymbol{v}_h=\boldsymbol{u}_h^c -\boldsymbol{u}_H^c$, and using $ [[ \boldsymbol{u}_h^c -\boldsymbol{u}_H^c]] =0$, we have
	\begin{eqnarray*}
		(\kappa(\boldsymbol{u} -\boldsymbol{u}_h), \boldsymbol{u}_h^c -\boldsymbol{u}_H^c)_{0 ,\mathcal{T}_h}
		+
		(\mu\nabla\times(\boldsymbol{u} -\boldsymbol{u}_h), \nabla\times(\boldsymbol{u}_h^c -\boldsymbol{u}_H^c))_{0 ,\mathcal{T}_h}\\
		+
		<[[ \boldsymbol{u}_h]] , \{\{  \mu\nabla\times (\boldsymbol{u}_h^c -\boldsymbol{u}_H^c) \}\} >_{\mathcal{F}_h}=0,
	\end{eqnarray*}
	which leads to
	\begin{eqnarray}\label{Eqn:*1}\nonumber
	(\kappa(\boldsymbol{u} -\boldsymbol{u}_h), \boldsymbol{u}_h^c -\boldsymbol{u}_H^c)_{0 ,\mathcal{T}_h}
	+
	(\mu\nabla\times(\boldsymbol{u} -\boldsymbol{u}_h), \nabla\times(\boldsymbol{u}_h^c -\boldsymbol{u}_H^c))_{0 ,\mathcal{T}_h}\\
	=
	-<[[ \boldsymbol{u}_h]] , \{\{   \mu\boldsymbol{u}_h^c -\boldsymbol{u}_H^c \}\} >_{\mathcal{F}_h}.
	\end{eqnarray}
	Using \eqref{Eqn:L} and \eqref{Eqn:se}, we have
	\begin{eqnarray}\nonumber
	<[[ \boldsymbol{u}_h]] , \{\{  \nabla\times (\boldsymbol{u}_h^c -\boldsymbol{u}_H^c) \}\} >_{\mathcal{F}_h}
	&=& 	
	(\mathcal{L}_h(\boldsymbol{u}_h ), \nabla\times(\boldsymbol{u}_h^c -\boldsymbol{u}_H^c))_{0 ,\mathcal{T}_h}		
	\\ \label{Eqn:*2}
	& &~\hspace{-1.8cm} \leq
	C_{\mathcal{L}} \|h^{-1/2}[[ \boldsymbol{u}_h ]] \|_{0 ,\mathcal{T}_h} \|\nabla\times(\boldsymbol{u}_h^c -\boldsymbol{u}_H^c)\|_{0 ,\mathcal{T}_h}.
	\end{eqnarray}
	
	Let $\boldsymbol{u}_h = \boldsymbol{u}_h^c+\boldsymbol{u}_h^{\bot}$ and $\boldsymbol{u}_H = \boldsymbol{u}_H^c+\boldsymbol{u}_H^{\bot}$, we have
	\begin{equation}\label{Eqn:*3}
	\boldsymbol{u}_h+\boldsymbol{u}_H^c- \boldsymbol{u}_h^c= \boldsymbol{u}_H-\boldsymbol{u}_H^\bot + \boldsymbol{u}_h^\bot,
	\end{equation}
	where $\boldsymbol{u}^c_H\in\boldsymbol{U}_H^c$, $\boldsymbol{u}^c_h\in\boldsymbol{U}_h^c$, $\boldsymbol{u}^{\bot}_H\in\boldsymbol{U}_H^{\bot}$, $\boldsymbol{u}^{\bot}_h\in\boldsymbol{U}_h^{\bot}$.
	By \eqref{Eqn:*3}, \eqref{Eqn:*1}, \eqref{Eqn:*2} and Young's inequality, we get
	\begin{eqnarray*}
		&&\|\vert\boldsymbol{u}-\boldsymbol{u}_h \vert\|_h^2 \\
		&& =\|\kappa (\boldsymbol{u}-\boldsymbol{u}_h)\|^2_{L^2(\Omega)}
		+
		\|  \nabla\times\mu (\boldsymbol{u}-\boldsymbol{u}_h)\|^2_{L^2(\Omega)}\\
		&& \quad+
		\sum\limits_{f\in\mathcal{F}_h} \alpha h_f^{-1} <[[( \boldsymbol{u}-\boldsymbol{u}_h)]]   , [[ \boldsymbol{u}-\boldsymbol{u}_h]]  >_{\mathcal{F}_h}   \\
		&&
		=
		\|\vert \boldsymbol{u}-\boldsymbol{u}_h -\boldsymbol{u}_H^c+\boldsymbol{u}_h^c\vert\|_h^2
		-
		\|\vert\boldsymbol{u}_h^c -\boldsymbol{u}_H^c\vert\|_h^2
		-
		2(\kappa(\boldsymbol{u} -\boldsymbol{u}_h), \boldsymbol{u}_h^c -\boldsymbol{u}_H^c)_{0 ,\mathcal{T}_h}
		\\
		&& \quad
		-
		2(\mu\nabla\times(\boldsymbol{u} -\boldsymbol{u}_h), \nabla\times(\boldsymbol{u}_h^c -\boldsymbol{u}_H^c))_{0 ,\mathcal{T}_h}
		\\
		&& \quad
		-2 \sum\limits_{f\in\mathcal{F}_h} \alpha h_f^{-1}< [[ (\boldsymbol{u} -\boldsymbol{u}_h)]] , [[ \boldsymbol{u}_h^c -\boldsymbol{u}_H^c]] >   \\
		&& \lesssim
		\|\vert  \boldsymbol{u}-\boldsymbol{u}_H\vert\|_H^2 +2\|\vert \boldsymbol{u}-\boldsymbol{u}_H \vert\|_H \|\vert \boldsymbol{u}_h^\bot-\boldsymbol{u}_H^\bot \vert\|_h + \|\vert \boldsymbol{u}_h^\bot-\boldsymbol{u}_H^\bot \vert\|_h^2
		- \|\vert\boldsymbol{u}_h^c -\boldsymbol{u}_H^c\vert\|_h^2 \\
		&&  \quad +2 \|h^{-1/2}[[ \boldsymbol{u}_h ]] \|_{0 ,\mathcal{T}_h} \|\nabla\times(\boldsymbol{u}_h^c -\boldsymbol{u}_H^c)\|_{0 ,\mathcal{T}_h} \\
		&& \leq ( 1+\delta_1) \|\vert  \boldsymbol{u}-\boldsymbol{u}_H\vert\|_H^2 +(1+\frac{1}{\delta_1}) \|\vert \boldsymbol{u}_h^\bot-\boldsymbol{u}_H^\bot \vert\|_h^2 -(1-\hat{\delta}_2C_{\mathcal{L}}) \|\vert\boldsymbol{u}_h^c -\boldsymbol{u}_H^c\vert\|_h^2 \\
		&& \quad + \frac{C_{\mathcal{L}}}{\hat{\delta}_2}\|h^{-1/2}[[ \boldsymbol{u}_h ]] \|^2_{0 ,\mathcal{T}_h}\\
		&&=( 1+\delta_1) \|\vert  \boldsymbol{u}-\boldsymbol{u}_H\vert\|_H^2 +(1+\frac{1}{\delta_1}) \|\vert \boldsymbol{u}_h^\bot-\boldsymbol{u}_H^\bot \vert\|_h^2 -(1-{\delta}_2) \|\vert\boldsymbol{u}_h^c -\boldsymbol{u}_H^c\vert\|_h^2 \\
		&& \quad + \frac{C^2_{\mathcal{L}}}{ {\delta}_2}\|h^{-1/2}[[ \boldsymbol{u}_h ]] \|^2_{0 ,\mathcal{T}_h},
	\end{eqnarray*}
	where $\delta_2 = \hat{\delta}_2C_{\mathcal{L}}$.
	Using $\boldsymbol{u}_H^c =\boldsymbol{u}_H-\boldsymbol{u}_H^\bot$, $  \boldsymbol{u}_h^c =\boldsymbol{u}_h-\boldsymbol{u}_h^\bot$, triangle inequality and average inequality, we have
	\begin{eqnarray*}
		\|\vert\boldsymbol{u}_h^c -\boldsymbol{u}_H^c\vert\|_h^2 \geq    \frac{1}{2}\|\vert\boldsymbol{u}_h -\boldsymbol{u}_H\vert\|_h^2-   \|\vert\boldsymbol{u}_h^\bot -\boldsymbol{u}_H^\bot\vert\|_h^2.
	\end{eqnarray*}
	By triangle inequality and \eqref{Eqn:bot}, we obtain
	\begin{eqnarray*}
		\|\vert \boldsymbol{u}_h^\bot-\boldsymbol{u}_H^\bot \vert\|_h^2 & \leq & 2( \|\vert \boldsymbol{u}_h^\bot \vert\|_h^2+ \|\vert \boldsymbol{u}_H^\bot \vert\|_H^2)\\
		& \leq&  2\alpha\|h^{-1/2}[[ \boldsymbol{u}_h^\bot ]] \|^2_{0 ,\mathcal{T}_h} +2\alpha\|h^{-1/2}[[ \boldsymbol{u}_H^\bot ]] \|^2_{0 ,\mathcal{T}_h} .
	\end{eqnarray*}
	Combining  $[[ \boldsymbol{u}_H ]] =[[ \boldsymbol{u}_H^\bot+ \boldsymbol{u}_H^c ]] =[[ \boldsymbol{u}_H^\bot ]] $ and \eqref{Eqn:ajump}, we have
	\begin{eqnarray*}
		\|\vert\boldsymbol{u}-\boldsymbol{u}_h \vert\|^2_h
		&\leq& ( 1+\delta_1) \|\vert  \boldsymbol{u}-\boldsymbol{u}_H\vert\|_H^2 -\frac{1-\delta_2}{2}\|\vert\boldsymbol{u}_h -\boldsymbol{u}_H\vert\|_h^2 \\
		&&+  \frac{C_3}{ \delta_1\delta_2 \alpha} \bigg(\eta^2( \boldsymbol{u}_{h}, \boldsymbol{p}_{h};\mathcal{T}_{h})+ \eta^2( \boldsymbol{u}_{H}, \boldsymbol{p}_{H};\mathcal{T}_{H})\bigg).
	\end{eqnarray*}
\end{proof}

\subsection{Contraction of the error estimator}

In this subsection, we prove the reduction of error indicators. Let us first consider the effect of changing the finite element function used in the estimator.
\begin{lemma}\label{Lem:3.8}
	Given $\boldsymbol{f}\in \boldsymbol{L}^2(\Omega)$ and two tetrahedral mesh $\mathcal{T}_h$, $\mathcal{T}_{H} $ with $\mathcal{T}_{H}\leq\mathcal{T}_{h}$. Let $(\boldsymbol{v}_h, \boldsymbol{q}_h)\in \boldsymbol{U}_h \times \boldsymbol{Q}_h $ and $(\boldsymbol{v}_H, \boldsymbol{q}_H)\in \boldsymbol{U}_H \times \boldsymbol{Q}_H $. For any $\epsilon >0$, we have
	\begin{eqnarray}\label{Equ:4.2}
	\eta^2( \boldsymbol{v}_{h}, \boldsymbol{q}_{h};\mathcal{T}_{h})
	\leq (1+\epsilon) \eta^2( \boldsymbol{v}_{H}, \boldsymbol{q}_{H};\mathcal{T}_{h})+C_\epsilon \|(\boldsymbol{v}_{h}, \boldsymbol{q}_{h})-(\boldsymbol{v}_{H}, \boldsymbol{q}_{H})\|^2_{DG},
	\end{eqnarray}
	where $ C_\epsilon$ depending on  the $\epsilon$, and the mesh size $h<1$.
\end{lemma}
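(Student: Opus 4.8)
The plan is to compare the two indicators element by element and face by face. Since $\mathcal{T}_H\le\mathcal{T}_h$ is produced by bisection, the coarse spaces are nested, $\boldsymbol{U}_H\times\boldsymbol{Q}_H\subset\boldsymbol{U}_h\times\boldsymbol{Q}_h$, so the differences $\boldsymbol{e}_v:=\boldsymbol{v}_h-\boldsymbol{v}_H$ and $\boldsymbol{e}_q:=\boldsymbol{q}_h-\boldsymbol{q}_H$ are piecewise polynomials of degree $\le l$ on $\mathcal{T}_h$. First I would exploit that every residual and jump in \eqref{Equ:3.23} is affine in $(\boldsymbol{v}_h,\boldsymbol{q}_h)$ with the data $\boldsymbol{f}$ entering only additively, so that $\boldsymbol{f}$ cancels in each difference: $R_1(\boldsymbol{v}_h,\boldsymbol{q}_h)=R_1(\boldsymbol{v}_H,\boldsymbol{q}_H)+(\boldsymbol{e}_q-\mu\nabla\times\boldsymbol{e}_v)$, $R_2(\boldsymbol{v}_h,\boldsymbol{q}_h)=R_2(\boldsymbol{v}_H,\boldsymbol{q}_H)-(\nabla\times\boldsymbol{e}_q+\kappa\boldsymbol{e}_v)$, $R_3(\boldsymbol{v}_h)=R_3(\boldsymbol{v}_H)-\kappa\nabla\cdot\boldsymbol{e}_v$ (using that $\kappa$ is piecewise constant), $J_1(\boldsymbol{q}_h)=J_1(\boldsymbol{q}_H)+[[\boldsymbol{e}_q]]$ and $J_2(\boldsymbol{v}_h)=J_2(\boldsymbol{v}_H)-[[\kappa\boldsymbol{e}_v]]$.

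For each of these five contributions I then apply the triangle inequality followed by Young's inequality with parameter $\epsilon$, which yields a leading term $(1+\epsilon)$ times the corresponding quantity evaluated at $(\boldsymbol{v}_H,\boldsymbol{q}_H)$ plus a remainder $(1+\epsilon^{-1})$ times the square of the correction. Summing over $\tau\in\mathcal{T}_h$ and $f\in\mathcal{F}_h$ and invoking the definition \eqref{Equ:3.24} collects the leading terms into exactly $(1+\epsilon)\,\eta^2(\boldsymbol{v}_H,\boldsymbol{q}_H;\mathcal{T}_h)$, so the whole task reduces to bounding the accumulated remainders by a constant multiple of $\|(\boldsymbol{v}_h,\boldsymbol{q}_h)-(\boldsymbol{v}_H,\boldsymbol{q}_H)\|^2_{DG}$.

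The main obstacle is that the correction terms carry derivatives and face traces of the discrete error that do not appear directly in the DG norm \eqref{Equ:3.25}, which only controls $\|\boldsymbol{e}_q\|_{L^2}$, $\|\kappa\boldsymbol{e}_v\|_{L^2}$, the broken $\|\mu\nabla\times\boldsymbol{e}_v\|_{L^2}$ and the scaled jump $\alpha h_f^{-1}\|[[\boldsymbol{e}_v]]\|^2_{L^2(f)}$. The resolution is that $\boldsymbol{e}_v,\boldsymbol{e}_q$ are polynomials on $\mathcal{T}_h$, so the element inverse inequalities $\|\nabla\times\boldsymbol{e}_q\|_{L^2(\tau)}+\|\nabla\cdot\boldsymbol{e}_v\|_{L^2(\tau)}\lesssim h_\tau^{-1}(\|\boldsymbol{e}_q\|_{L^2(\tau)}+\|\boldsymbol{e}_v\|_{L^2(\tau)})$ allow the $h_\tau^2$ weights on the $R_2,R_3$ remainders to absorb exactly the factor $h_\tau^{-2}$, leaving zeroth-order $L^2$ norms; likewise the discrete trace inequality $\|\boldsymbol{w}\|^2_{L^2(f)}\lesssim h_\tau^{-1}\|\boldsymbol{w}\|^2_{L^2(\tau)}$ turns the $h_f$-weighted face remainders $h_f\|[[\boldsymbol{e}_q]]\|^2_{L^2(f)}$ and $h_f\|[[\kappa\boldsymbol{e}_v]]\|^2_{L^2(f)}$ into element $L^2$ norms (here $h_f\approx h_\tau$ is used). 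Finally the bounds $\kappa_1<\kappa<\kappa_2$ and $\mu_1<\mu<\mu_2$ together with $h_\tau<1$ convert $\|\boldsymbol{e}_v\|_{L^2}$ into $\|\kappa\boldsymbol{e}_v\|_{L^2}$ and match every remainder to a term of the DG norm, while the nonnegative jump part $\alpha h_f^{-1}\|[[\boldsymbol{e}_v]]\|^2_{L^2(f)}\ge 0$ remains available if needed. Gathering the constants from the inverse and trace inequalities (depending only on shape regularity and $l$), the coercivity bounds on $\kappa,\mu$, the penalty parameter $\alpha$ and the factor $(1+\epsilon^{-1})$ into a single $C_\epsilon$ completes the estimate \eqref{Equ:4.2}.
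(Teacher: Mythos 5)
Your proposal is correct and follows essentially the same route as the paper's proof: split each of the five residual/jump contributions by the triangle inequality into the coarse-mesh term plus a correction in $(\boldsymbol{e}_v,\boldsymbol{e}_q)$, absorb the mesh-size weights on the corrections via element inverse inequalities (for $R_2$, $R_3$) and discrete trace inequalities (for $J_1$, $J_2$), and finish with Young's inequality to produce the $(1+\epsilon)$ and $C_\epsilon$ factors. Your explicit observation that the data $\boldsymbol{f}$ cancels because the residuals are affine is a slightly cleaner way of phrasing the add-and-subtract step the paper performs, but the substance is identical.
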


\begin{proof}	
	For any  $\tau_* \in \mathcal{T}_{h}$, we will discuss each of the five components of the mark $ \eta^2( \boldsymbol{v}_{h}, \boldsymbol{q}_{h};\mathcal{T}_{h})$.
	
	Firstly, using the definition of $R_1(\boldsymbol{v}_{h}, \boldsymbol{q}_{h})$ and triangle inequality, we have
	\begin{eqnarray}\label{Equ:3.41}
	\lefteqn{
		\|R_1(\boldsymbol{v}_{h}, \boldsymbol{q}_{h})\|_{L^2(\tau_*)}  }\\ \nonumber
	&&  =\|\boldsymbol{q}_{h}- \mu\nabla \times \boldsymbol{v}_{h}\|_{L^2(\tau_*)}\\\nonumber
	&&  =\|\boldsymbol{q}_{h}-\boldsymbol{q}_H + \mu\nabla \times (\boldsymbol{v}_{H} -  \boldsymbol{v}_{h})+\boldsymbol{q}_H- \mu\nabla \times \boldsymbol{v}_{H}\|_{L^2(\tau_*)}\\\nonumber
	&& \lesssim \|\boldsymbol{q}_H-\nabla \times \boldsymbol{v}_{H}\|_{L^2(\tau_*)}+\|\boldsymbol{q}_{h}-\boldsymbol{q}_H\|_{L^2(\tau_*)}+\|\nabla \times (\boldsymbol{v}_{h} -  \boldsymbol{v}_{H})\|_{L^2(\tau_*)}.
	\end{eqnarray}
	
	Secondly, using the definition of $R_2(\boldsymbol{v}_h,\boldsymbol{q}_h)$, triangle inequality and inverse inequality, we get
	\begin{eqnarray}\label{Equ:3.42}
	\lefteqn{
		h_{\tau_*}\|R_2(\boldsymbol{v}_{h}, \boldsymbol{q}_{h})\|_{L^2(\tau_*)}}\\ \nonumber
	&&=h_{\tau_*}(\|\boldsymbol{f}
	- \nabla\times \boldsymbol{q}_{h}
	- \kappa\boldsymbol{v}_{h}\|_{L^2(\tau_*)})\\\nonumber
	&&=h_{\tau_*}(\|\boldsymbol{f}
	-
	\nabla\times( \boldsymbol{q}_{h} -\boldsymbol{q}_H )
	- \kappa(\boldsymbol{v}_{h}-\boldsymbol{v}_{H})
	- \nabla\times \boldsymbol{q}_H
	- \kappa\boldsymbol{v}_{H}\|_{L^2(\tau_*)})\\\nonumber
	&&\leq h_{\tau_*}( \|\boldsymbol{f}-\nabla\times \boldsymbol{q}_{H}- \kappa\boldsymbol{v}_{H}\|_{L^2(\tau_*)}
	+
	\|\nabla\times( \boldsymbol{q}_h-\boldsymbol{q}_{H} )\|_{L^2(\tau_*)}
	+
	\|\kappa(\boldsymbol{v}_{h}-\boldsymbol{v}_{H})\|_{L^2(\tau_*)} )\\\nonumber
	&&\lesssim
	h_{\tau_*}(\|R_2(\boldsymbol{v}_{H}, \boldsymbol{q}_{H})\|_{L^2(\tau_*)}
	+
	h_{\tau_*}^{-1} \| (\boldsymbol{q}_h-\boldsymbol{q}_{H})\|_{L^2(\tau_*)}
	+
	\|\kappa(\boldsymbol{v}_{h}-\boldsymbol{v}_{H})\|_{L^2(\tau_*)} )\\\nonumber
	&&\lesssim
	h_{\tau_*}\|R_2(\boldsymbol{v}_{H}, \boldsymbol{q}_{H})\|_{L^2(\tau_*)}
	+
	\| (\boldsymbol{q}_h-\boldsymbol{q}_{H})\|_{L^2(\tau_*)}
	+
	h_{\tau_*}\|\kappa(\boldsymbol{v}_{h}-\boldsymbol{v}_{H})\|_{L^2(\tau_*)}.
	\end{eqnarray}
	
	Similarly, using the definition of $R_3(\boldsymbol{v}_h)$, triangle inequality and inverse inequality, we get
	\begin{eqnarray}\label{Equ:3.43}
	\lefteqn{
		h_{\tau_*}\|R_3(\boldsymbol{v}_{h})\|_{L^2(\tau_*)}  }\\ \nonumber
	&&=h_{\tau_*}\| \nabla\cdot (\boldsymbol{f}- \kappa \boldsymbol{v}_{h})\|_{L^2(\tau_*)}\\\nonumber
	&&=h_{\tau_*}\| \nabla\cdot (\boldsymbol{f}- \kappa\boldsymbol{v}_{H}+ \kappa\boldsymbol{v}_{H}- \kappa\boldsymbol{v}_{h})\|_{L^2(\tau_*)}\\\nonumber
	&&\leq h_{\tau_*}( \| \nabla\cdot (\boldsymbol{f}- \kappa\boldsymbol{v}_{H})\|_{L^2(\tau_*)}+\| \nabla\cdot\kappa ( \boldsymbol{v}_{H}- \boldsymbol{v}_{h})\|_{L^2(\tau_*)} )\\\nonumber
	&&\lesssim  h_{\tau_*}(\|R_3(\boldsymbol{v}_{H})\|_{L^2(\tau_*)}+h_{\tau_*}^{-1} \|\kappa(\boldsymbol{v}_{H}-\boldsymbol{v}_{h})\|_{L^2(\tau_*)})\\\nonumber
	&&\lesssim
	h_{\tau_*}\|R_3(\boldsymbol{v}_{H})\|_{L^2(\tau_*)}+ \|\kappa(\boldsymbol{v}_{H}-\boldsymbol{v}_{h})\|_{L^2(\tau_*)} .
	\end{eqnarray}
	
	Next, we discuss the jump $ J_1(\boldsymbol{q}_{h})$ and $J_2(\boldsymbol{v}_{h})$.
	For any $f\in \mathcal{F}(\mathcal{T}_h)$, we let $f= \tau_*^1 \bigcap  \tau_*^2$ with  $ \tau_*^1 , \tau_*^2 \in \mathcal{T}_h.$ Furthermore, using the definition of $J_1(\boldsymbol{q}_h)$, triangle inequality and trace inequality, we have
	\begin{eqnarray}\label{Equ:3.44}
	\lefteqn{
		h^{1/2}_f\|J_1(\boldsymbol{q}_{h})\|_{L^2(f)}}\\ \nonumber
	&&=h^{1/2}_f\|[[ \boldsymbol{q}_{h}]] \|_{L^2(f)}\\\nonumber
	&&=h^{1/2}_f\|[[ \boldsymbol{q}_{H}+\boldsymbol{q}_{h}-\boldsymbol{q}_H]] \|_{L^2(f)}\\\nonumber
	&&\leq h^{1/2}_f( \|[[ \boldsymbol{q}_{H}]] \|_{L^2(f)}+  \|[[ \boldsymbol{q}_{h}-\boldsymbol{q}_H]] \|_{L^2(f)} )\\\nonumber
	&&\leq h^{1/2}_f\|[[ \boldsymbol{q}_{H}]] \|_{L^2(f)}+h^{1/2}_f\|(\boldsymbol{q}_{h}-\boldsymbol{q}_H)\vert_{\tau_*^1}\|_{L^2(f)}+h^{1/2}_f\|(\boldsymbol{q}_{h}-\boldsymbol{q}_H)\vert_{\tau_*^2}\|_{L^2(f)}\\\nonumber
	&&\lesssim h^{1/2}_f\|J_1(\boldsymbol{q}_{H})\|_{L^2(f)}+ \|(\boldsymbol{q}_{h}-\boldsymbol{q}_H)\|_{L^2(\tau_*^1\cup \tau_*^2)}.
	\end{eqnarray}
	
	Similarly, using the definition of $J_2(\boldsymbol{v}_h)$, triangle inequality and trace inequality, we have
	\begin{eqnarray}\label{Equ:3.45}
	\lefteqn{
		h^{1/2}_f\|J_2(\boldsymbol{v}_{h})\|_{L^2(f)}  }\\ \nonumber
	&&=h^{1/2}_f\|[[ (\boldsymbol{f}-\kappa\boldsymbol{v}_{h})]] \|_{L^2(f)}\\\nonumber
	&& =h^{1/2}_f\|[[ (\boldsymbol{f}-\kappa\boldsymbol{v}_{H}+\kappa\boldsymbol{v}_{H}-\kappa\boldsymbol{v}_{h})]] \|_{L^2(f)}\\\nonumber
	&& \leq
	h^{1/2}_f(\|[[ (\boldsymbol{f}-\kappa\boldsymbol{v}_{H})]] \|_{L^2(f)}+\|[[ \kappa(\boldsymbol{v}_{H}-\boldsymbol{v}_{h})]] \|_{L^2(f)})\\\nonumber
	&& \leq h^{1/2}_f \|J_2(\boldsymbol{v}_{H})\|_{L^2(f)} +h^{1/2}_f(\|\kappa(\boldsymbol{v}_{H}-\boldsymbol{v}_{h})\vert_{\tau_*^1}\|_{L^2(f)} +\|\kappa(\boldsymbol{v}_{H}-\boldsymbol{v}_{h})\vert_{\tau_*^2}\|_{L^2(f)})\\\nonumber
	&& \lesssim
	h^{1/2}_f \|J_2(\boldsymbol{v}_{H})\|_{L^2(f)} +\|\kappa\boldsymbol{v}_{H}-\kappa\boldsymbol{v}_{h}\|_{L^2(\tau_*^1\cup \tau_*^2)}.
	\end{eqnarray}
	
	Finally, the desired result \eqref{Equ:4.2} is obtained by  combining \eqref{Equ:3.41}-\eqref{Equ:3.45}, Young's inequality and  the shape regularity of mesh $\mathcal{T}_h$.
\end{proof}

We then prove the contraction of the error estimator  under the assumptions on the problem of \eqref{Equ:3.16}-\eqref{Equ:3.17}.

\begin{lemma}\label{Lem:3.9}
	Given constant $\theta\in(0, 1)$  and two tetrahedral mesh $\mathcal{T}_h$, $\mathcal{T}_{H}(\mathcal{T}_{H}\leq\mathcal{T}_{h})$. Let $(\boldsymbol{u}_H, \boldsymbol{p}_H)\in \boldsymbol{U}_H\times  \boldsymbol{Q}_H$ be the solution of  \eqref{Equ:3.16}-\eqref{Equ:3.17},
	and $\mathcal{R}_{\mathcal{T}_H \longrightarrow \mathcal{T}_{h}}=\mathcal{T}_H\setminus(\mathcal{T}_{h}\cap\mathcal{T}_H)$ be the set of all element refined into $\mathcal{T}_{h}$ on $\mathcal{T}_H$. Then, there is a constant $ \lambda\in (0, 1)$ independent of mesh size, such that
	\begin{equation}
	\eta^2( \boldsymbol{u}_H, \boldsymbol{p}_H; \mathcal{T}_{h} ) \leq  \eta^2( \boldsymbol{u}_H, \boldsymbol{p}_H; \mathcal{T}_H )-  \lambda \eta^2( \boldsymbol{u}_H, \boldsymbol{p}_H;\mathcal{R}_{\mathcal{T}_H\rightarrow \mathcal{T}_{h}}).
	\end{equation}
	\end {lemma}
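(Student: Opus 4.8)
The plan is to argue element by element, exploiting that $\mathcal{T}_h$ is obtained from $\mathcal{T}_H$ by bisection so that the two meshes agree away from the refined region. First I would split $\mathcal{T}_h=(\mathcal{T}_h\cap\mathcal{T}_H)\cup(\mathcal{T}_h\setminus\mathcal{T}_H)$ and observe that on every common element $\tau\in\mathcal{T}_h\cap\mathcal{T}_H$ the local indicator $\eta^2(\boldsymbol{u}_H,\boldsymbol{p}_H;\tau)$ is literally unchanged, since $(\boldsymbol{u}_H,\boldsymbol{p}_H)$, the data $\boldsymbol{f}$, the coefficients $\mu,\kappa$, and the size $h_\tau$ all coincide there. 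It therefore suffices to compare $\eta^2(\boldsymbol{u}_H,\boldsymbol{p}_H;\mathcal{T}_h\setminus\mathcal{T}_H)$, the indicator summed over the newly created children, against $\eta^2(\boldsymbol{u}_H,\boldsymbol{p}_H;\mathcal{R}_{\mathcal{T}_H\rightarrow\mathcal{T}_h})$, the indicator summed over their parents. Concretely, I would reduce the claim to the local statement that for each refined parent $T\in\mathcal{R}_{\mathcal{T}_H\rightarrow\mathcal{T}_h}$ with children $\{T'\}\subset\mathcal{T}_h$ one has $\sum_{T'\subset T}\eta^2(\boldsymbol{u}_H,\boldsymbol{p}_H;T')\le(1-\lambda)\,\eta^2(\boldsymbol{u}_H,\boldsymbol{p}_H;T)$, and then sum over $T$.

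Next I would examine the five contributions of \eqref{Equ:3.23} separately on a parent $T$ and its children. Because $(\boldsymbol{u}_H,\boldsymbol{p}_H)$ restricted to $T$ is a single polynomial and $\boldsymbol{f},\kappa$ are conforming on $T$, the residuals $R_1,R_2,R_3$ are the \emph{same} functions on $T$ and on its children, so $\sum_{T'\subset T}\|R_i\|^2_{L^2(T')}=\|R_i\|^2_{L^2(T)}$. Bisection yields $h_{T'}\le\gamma\,h_T$ with a fixed $\gamma\in(0,1)$ determined by the bisection rule and the shape regularity of $\mathcal{T}_0$, whence the weighted volume terms contract: $\sum_{T'}h_{T'}^2\|R_2\|^2_{L^2(T')}\le\gamma^2 h_T^2\|R_2\|^2_{L^2(T)}$, and similarly for $R_3$. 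For the face terms I would use that the new faces interior to $T$ carry a vanishing jump of $\boldsymbol{p}_H$ and of $\boldsymbol{f}-\kappa\boldsymbol{u}_H$ (the two traces agree inside a single coarse element), so only the sub-faces of $\partial T$ survive; on those $\sum\|J_i\|^2_{L^2(f')}=\|J_i\|^2_{L^2(f)}$ while $h_{f'}\le\gamma h_f$, so $\sum h_{f'}\|J_i\|^2$ contracts by the factor $\gamma$.

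The main obstacle is the term $\|R_1(\boldsymbol{u}_H,\boldsymbol{p}_H)\|^2_{L^2(\tau)}$, which in \eqref{Equ:3.23} carries \emph{no} power of $h_\tau$: the computation above gives only $\sum_{T'}\|R_1\|^2_{L^2(T')}=\|R_1\|^2_{L^2(T)}$, i.e. equality and no contraction, so a naive term-by-term bound cannot absorb this piece into a $-\lambda\,\eta^2(\mathcal{R}_{\mathcal{T}_H\rightarrow\mathcal{T}_h})$ reduction. To overcome this I would not treat $R_1$ pointwise but test the discrete equation \eqref{Equ:3.16} with $\boldsymbol{q}_h=R_1(\boldsymbol{u}_H,\boldsymbol{p}_H)\in\boldsymbol{Q}_H$, obtaining $\|R_1\|^2_{L^2}=-\langle\{\{\mu R_1\}\},[[\boldsymbol{u}_H]]\rangle_{\mathcal{F}_H}$; a trace and inverse inequality then bound the $R_1$ contribution by $\|h^{-1/2}[[\boldsymbol{u}_H]]\|_{L^2(\mathcal{F}_H)}$, a jump quantity that does contract under refinement. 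This is precisely the delicate point where the mixed structure must be invoked and where, as the authors note, the technique of \cite{BonitoNochetto10:734} does not carry over verbatim; making the passage from the global identity to the local contribution on $\mathcal{R}_{\mathcal{T}_H\rightarrow\mathcal{T}_h}$ rigorous is the part I expect to demand the most care.

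Finally I would collect the contractions: the weighted volume parts reduce by $\gamma^2$, the surviving face parts by $\gamma$, and the $R_1$ part is controlled through the jump estimate. Choosing $\lambda\in(0,1)$ smaller than the resulting worst-case reduction factor, which depends only on $\gamma$ and the shape regularity of $\mathcal{T}_0$ and hence is mesh-size independent, yields $\sum_{T'\subset T}\eta^2(\boldsymbol{u}_H,\boldsymbol{p}_H;T')\le(1-\lambda)\,\eta^2(\boldsymbol{u}_H,\boldsymbol{p}_H;T)$ for every refined $T$. Summing over $T\in\mathcal{R}_{\mathcal{T}_H\rightarrow\mathcal{T}_h}$ and restoring the unchanged indicators on $\mathcal{T}_h\cap\mathcal{T}_H$ gives the asserted inequality.
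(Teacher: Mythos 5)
Your element-by-element strategy coincides with the paper's: the authors likewise restrict to a bisected parent $\tau$ with children $\tau_*^1,\tau_*^2$, use $h_{\tau_*^i}=2^{-1/3}h_\tau$ to contract the $h_\tau^2$-weighted residuals $R_2,R_3$ and the $h_f$-weighted jumps, observe that the newly created interior faces carry zero jump of $\boldsymbol{p}_H$ and of $\boldsymbol{f}-\kappa\boldsymbol{u}_H$, and then sum over $\mathcal{R}_{\mathcal{T}_H\rightarrow\mathcal{T}_h}$ exactly as you do. Where you diverge is the unweighted term $\|R_1\|^2$: the paper records only \eqref{Equ:3.48}, i.e.\ $\|R_1\|^2_{L^2(\tau_*^1)}+\|R_1\|^2_{L^2(\tau_*^2)}\leq\|R_1\|^2_{L^2(\tau)}$ (in fact equality), and then asserts the local contraction \eqref{Equ:3.51} with $\overline{\gamma}<1$ without explaining how this non-contracting contribution is absorbed. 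Your identification of this term as the crux is correct and sharper than the paper's treatment.

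However, your proposed repair does not close the gap. Testing \eqref{Equ:3.16} with $\boldsymbol{q}_h=R_1$ does yield $\|R_1\|^2_{L^2(\Omega)}=-\langle\{\{\mu R_1\}\},[[\boldsymbol{u}_H]]\rangle_{\mathcal{F}_H}$ and hence $\|R_1\|_{L^2(\Omega)}\lesssim\|h^{-1/2}[[\boldsymbol{u}_H]]\|_{L^2(\mathcal{F}_H)}$, but this jump quantity is not one of the five constituents of $\eta$ and, contrary to your claim, it does not decrease under refinement: the faces of $\mathcal{T}_H$ persist in $\mathcal{T}_h$ (possibly subdivided), the trace of $[[\boldsymbol{u}_H]]$ on them is unchanged, the weight $h_{f'}^{-1/2}\geq h_f^{-1/2}$ only grows, and the new interior faces contribute nothing. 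More fundamentally, the $R_1$ contribution of the children equals exactly that of the parents, so an inequality $\sum_{T'\subset T}\eta^2(\boldsymbol{u}_H,\boldsymbol{p}_H;T')\leq(1-\lambda)\,\eta^2(\boldsymbol{u}_H,\boldsymbol{p}_H;T)$ with $\lambda>0$ forces $\lambda\|R_1\|^2_{L^2(T)}$ to be dominated by the contraction gained on the remaining terms, which fails whenever $R_1$ dominates the local indicator; no global identity can rescue a bound that must hold element by element. (A secondary point: under longest-edge bisection two of the four faces of $T$ are not subdivided, so their $h_f\|J_i\|^2$ contributions do not contract either --- the paper's proof notes these unchanged faces but likewise does not account for them.) So your argument is incomplete exactly at the step you flagged, and the mechanism you propose there cannot deliver the stated $\lambda\in(0,1)$.
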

	
	\begin{proof}
		Assume that the tetrahedral mesh $\tau\in \mathcal{T}_H$  is divided into two new tetrahedral mesh $\tau_*^1$ and  $\tau_*^2$   with equal volumes, where $\tau_*^1, \tau_*^2 \in \mathcal{T}_{h}$. Thus, $h_{\tau_*^1}^3= \vert\tau_*^1\vert=  \vert\tau_*^2\vert = h_{\tau_*^2}^3 = 2^{-1} h_{\tau}^3$ by the shape regularity of mesh, which implies $h_{\tau_*^1} = h_{\tau_*^2}=2^{-1/3} h_{\tau}$. Then, we have
		\begin{equation}\label{Equ:3.48}
		\|R_1(\boldsymbol{u}_{H}, \boldsymbol{p}_{H})\|^2_{L^2(\tau_*^1)}
		+
		\|R_1(\boldsymbol{u}_{H}, \boldsymbol{p}_{H})\|^2_{L^2(\tau_*^2)}
		\leq \|R_1(\boldsymbol{u}_{H}, \boldsymbol{p}_{H})\|^2_{L^2(\tau)},
		\end{equation}
		and
		\begin{eqnarray}\nonumber
		&& h^2_{\tau_*^1}(\|R_2 (\boldsymbol{u}_H, \boldsymbol{p}_H)\|^2_{L^2(\tau_*^1)} +\|R_3(\boldsymbol{u}_H)\|^2_{L^2(\tau_*^1)} )\\
		&&~+h^2_{\tau_*^2}(\|R_2 (\boldsymbol{u}_H, \boldsymbol{p}_H)\|^2_{L^2(\tau_*^2)}  +\|R_3(\boldsymbol{u}_H)\|^2_{L^2(\tau_*^2)} )\nonumber \\\label{Equ:3.49}
		&& \quad \leq  2^{-2/3} h^2_{\tau}(\|R_2 (\boldsymbol{u}_H, \boldsymbol{p}_H)\|^2_{L^2(\tau)} +\|R_3(\boldsymbol{u}_H)\|^2_{L^2(\tau)} ) .
		\end{eqnarray}
		
		For any $f\in  \partial (\tau_*^1\cup\tau_*^2)$, which can be divided into three parts;
		
		(1) For the first part, there are two of the faces are constant and belong to $\tau$ .
		
		(2) For the second part, there are two new faces that overlap and are used to divide the mesh $\tau$. Since $(\boldsymbol{u}_H, \boldsymbol{p}_h)\in \boldsymbol{U}_H\times  \boldsymbol{Q}_H$ is a continuous polynomial in the region $\tau$, it follows that the value of $[[ \boldsymbol{p}_h]]$ and $[[ (\boldsymbol{f}-\kappa\boldsymbol{u}_H)]] $  on this surface is equal to zero.
		
		(3) For the third part, there are four faces that are obtained by dividing the two faces in the $\tau$ into two.
		
		Furthermore, we obtain
		\begin{equation}\label{Equ:3.51}
		\eta^2( \boldsymbol{u}_H, \boldsymbol{p}_H;\tau_*^1)+ \eta^2( \boldsymbol{u}_H, \boldsymbol{p}_H;\tau_*^2)\leq \overline{\gamma}  \eta^2( \boldsymbol{u}_H, \boldsymbol{p}_H;\tau).
		\end{equation}
		where constant $\overline{\gamma} \in (0, 1)$ independent of mesh $\tau$.
		
		Next, since $\mathcal{R}_{\mathcal{T}_H\rightarrow \mathcal{T}_{h}}$ represents the part of the set  in the tetrahedral set $\mathcal{T}_H $ that will be used to be refined, it follows that $\mathcal{R}_{\mathcal{T}_H\rightarrow \mathcal{T}_{h}} \subset  \mathcal{T}_H $. Let $ \overline{\mathcal{R}_{\mathcal{T}_H\rightarrow \mathcal{T}_{h}}}  $  denote the part of the cell set that has been refined in the tetrahedral set  $\mathcal{T}_{H} $, we have $\overline{\mathcal{R}_{\mathcal{T}_h\rightarrow \mathcal{T}_{H}}} \in  \mathcal{T}_{h}$. Obviously, $\mathcal{T}_H \setminus \mathcal{R}_{\mathcal{T}_H\rightarrow \mathcal{T}_{h}} =\mathcal{T}_{h} \setminus  \overline{\mathcal{R}_{\mathcal{T}_H\rightarrow \mathcal{T}_{h}}}$.
		Then combining the \eqref{Equ:3.51}, and the marking strategy \eqref{Equ:4.1}, we have
		\begin{eqnarray*}
			\eta^2( \boldsymbol{u}_H, \boldsymbol{p}_H; \mathcal{T}_{h} )
			&=& \eta^2( \boldsymbol{u}_H, \boldsymbol{p}_H; \mathcal{T}_{h} \setminus  \overline{\mathcal{R}_{\mathcal{T}_H\rightarrow \mathcal{T}_{h}}})+\eta^2( \boldsymbol{u}_H, \boldsymbol{p}_H; \overline{\mathcal{R}_{\mathcal{T}_H\rightarrow \mathcal{T}_{h}}})\\
			&\leq&
			\eta^2( \boldsymbol{u}_H, \boldsymbol{p}_H; \mathcal{T}_H \setminus \mathcal{R}_{\mathcal{T}_H\rightarrow \mathcal{T}_{h}} )+ \gamma  \eta^2( \boldsymbol{u}_H, \boldsymbol{p}_H;\mathcal{R}_{\mathcal{T}_H\rightarrow \mathcal{T}_{h}} )\\
			&\leq&
			\eta^2( \boldsymbol{u}_H, \boldsymbol{p}_H; \mathcal{T}_H )+(\overline{\gamma }-1) \eta^2( \boldsymbol{u}_H, \boldsymbol{p}_H;\mathcal{R}_{\mathcal{T}_H\rightarrow \mathcal{T}_{h}} )\\
			&\leq&
			\eta^2( \boldsymbol{u}_H, \boldsymbol{p}_H; \mathcal{T}_H )-  \lambda \eta^2( \boldsymbol{u}_H, \boldsymbol{p}_H;\mathcal{R}_{\mathcal{T}_H\rightarrow \mathcal{T}_{h}}),
		\end{eqnarray*}
		where $ \lambda =1-\overline{\gamma}  \in(0, 1)$ independent of mesh size.
	\end{proof}
	
	Now, we combine the Lemmas \ref{Lem:add1},  \ref{Lem:3.8} and \ref{Lem:3.9} to prove the reduction of error indicators.
	\begin{lemma}\label{Lem:3.11}
		Given a constant $\theta\in(0, 1)$ and  two tetrahedral mesh  $\mathcal{T}_h$, $\mathcal{T}_{H} (\mathcal{T}_{H}\leq\mathcal{T}_{h})$. Let $(\boldsymbol{u}_h, \boldsymbol{p}_h)\in \boldsymbol{U}_h\times  \boldsymbol{Q}_h$ and $(\boldsymbol{u}_{H}, \boldsymbol{p}_{H})\in \boldsymbol{U}_{H}\times  \boldsymbol{Q}_{H}$ be the solutions of  \eqref{Equ:3.16}-\eqref{Equ:3.17}, respectively. For any $\epsilon>0$ and $\lambda\in(0,1)$, we have
		\begin{eqnarray}\nonumber
		(1-\frac{C_\epsilon}{\alpha}  ) \eta^2( \boldsymbol{u}_{h}, \boldsymbol{p}_{h}; \mathcal{T}_{h} )
		&\leq&  ( 1+\epsilon+  \frac{C_\epsilon}{\alpha}  )\eta^2( \boldsymbol{u}_H, \boldsymbol{p}_H; \mathcal{T}_H )
		\\ \label{Equ:3.47}
		&&~\hspace{-1.8cm}
		-(1+\epsilon)\lambda \eta^2( \boldsymbol{u}_H, \boldsymbol{p}_H;\mathcal{R}_{\mathcal{T}_H\rightarrow \mathcal{T}_{h}})
		+
		C_\epsilon  \|\vert\boldsymbol{u}_{h}-\boldsymbol{u}_{H}\vert\|^2_h,
		\end{eqnarray}
		where constant $C_\epsilon$ depending on  the $\epsilon$ and mesh size.
	\end{lemma}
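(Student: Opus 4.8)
The plan is to obtain the contraction by chaining the three preparatory results \textbf{and then} absorbing the self-referential indicator term back to the left-hand side. First I would apply Lemma~\ref{Lem:3.8} (the change-of-discrete-function estimate \eqref{Equ:4.2}) to the pair $(\boldsymbol{v}_h,\boldsymbol{q}_h)=(\boldsymbol{u}_h,\boldsymbol{p}_h)$ and $(\boldsymbol{v}_H,\boldsymbol{q}_H)=(\boldsymbol{u}_H,\boldsymbol{p}_H)$, giving
\[
\eta^2(\boldsymbol{u}_h,\boldsymbol{p}_h;\mathcal{T}_h)\leq(1+\epsilon)\,\eta^2(\boldsymbol{u}_H,\boldsymbol{p}_H;\mathcal{T}_h)+C_\epsilon\,\|(\boldsymbol{u}_h,\boldsymbol{p}_h)-(\boldsymbol{u}_H,\boldsymbol{p}_H)\|^2_{DG}.
\]
The first term here is evaluated with the \emph{coarse} solution $(\boldsymbol{u}_H,\boldsymbol{p}_H)$ on the \emph{fine} mesh $\mathcal{T}_h$, which is exactly the quantity controlled by the geometric reduction of Lemma~\ref{Lem:3.9}. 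Substituting that bound replaces $(1+\epsilon)\eta^2(\boldsymbol{u}_H,\boldsymbol{p}_H;\mathcal{T}_h)$ by $(1+\epsilon)\eta^2(\boldsymbol{u}_H,\boldsymbol{p}_H;\mathcal{T}_H)-(1+\epsilon)\lambda\,\eta^2(\boldsymbol{u}_H,\boldsymbol{p}_H;\mathcal{R}_{\mathcal{T}_H\rightarrow\mathcal{T}_h})$, which already delivers the first two terms on the right-hand side of \eqref{Equ:3.47}.

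It remains to dismantle the $DG$-norm term. Comparing definitions \eqref{Equ:3.25} and \eqref{Equ:4.0} I would split it as
\[
\|(\boldsymbol{u}_h,\boldsymbol{p}_h)-(\boldsymbol{u}_H,\boldsymbol{p}_H)\|^2_{DG}=\|\boldsymbol{p}_h-\boldsymbol{p}_H\|^2_{L^2(\Omega)}+\|\vert\boldsymbol{u}_h-\boldsymbol{u}_H\vert\|^2_h,
\]
so that the $\boldsymbol{u}$-part is precisely the energy contribution $C_\epsilon\|\vert\boldsymbol{u}_h-\boldsymbol{u}_H\vert\|^2_h$ appearing in the statement. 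For the auxiliary variable I would invoke the second estimate of Lemma~\ref{Lem:add1}, square it and apply Young's inequality to get $\|\boldsymbol{p}_h-\boldsymbol{p}_H\|^2_{L^2(\Omega)}\lesssim\|\nabla\times(\boldsymbol{u}_h-\boldsymbol{u}_H)\|^2_{L^2(\Omega)}+\eta^2(\boldsymbol{u}_h,\boldsymbol{p}_h;\mathcal{T}_h)+\eta^2(\boldsymbol{u}_H,\boldsymbol{p}_H;\mathcal{T}_H)$. Because $\mu_1<\mu<\mu_2$, the curl term is dominated up to the fixed constant $\mu_1^{-2}$ by $\|\vert\boldsymbol{u}_h-\boldsymbol{u}_H\vert\|^2_h$ and is merged into the energy contribution; the two indicator squares are the genuinely new terms that must be carried along.

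The last step is the coefficient bookkeeping. The term $C_\epsilon\alpha^{-1}\eta^2(\boldsymbol{u}_h,\boldsymbol{p}_h;\mathcal{T}_h)$ produced on the right is transferred to the left, yielding the prefactor $(1-C_\epsilon/\alpha)$; the two coarse-mesh indicator contributions $(1+\epsilon)\eta^2(\boldsymbol{u}_H,\boldsymbol{p}_H;\mathcal{T}_H)$ and $C_\epsilon\alpha^{-1}\eta^2(\boldsymbol{u}_H,\boldsymbol{p}_H;\mathcal{T}_H)$ combine into $(1+\epsilon+C_\epsilon/\alpha)$, and the refined-element and energy terms remain untouched, which is exactly \eqref{Equ:3.47}. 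The point needing care is the explicit $1/\alpha$ scaling attached to the indicator terms coming out of Lemma~\ref{Lem:add1}: this factor should \emph{not} be hidden inside the generic $\lesssim$ but extracted honestly from the jump control \eqref{Eqn:ajump} and the lifting stability \eqref{Eqn:se}, both of which carry the penalty parameter $\alpha$ through $\boldsymbol{u}_h^{\bot}$. The main obstacle is therefore not combining the three lemmas but propagating the $\alpha$-dependence correctly, so that the absorption prefactor $(1-C_\epsilon/\alpha)$ stays strictly positive for $\alpha$ large enough and the resulting inequality is non-vacuous.
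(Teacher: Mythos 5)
Your proposal is correct and follows essentially the same route as the paper: Lemma \ref{Lem:3.8} with $(\boldsymbol{u}_h,\boldsymbol{p}_h)$ and $(\boldsymbol{u}_H,\boldsymbol{p}_H)$, then Lemma \ref{Lem:3.9} for the coarse-solution indicator on the fine mesh, the splitting of the $DG$-norm into $\|\boldsymbol{p}_h-\boldsymbol{p}_H\|^2_{L^2(\Omega)}+\|\vert\boldsymbol{u}_h-\boldsymbol{u}_H\vert\|^2_h$, the second estimate of Lemma \ref{Lem:add1} for the $\boldsymbol{p}$-part, and finally absorbing $\frac{C_\epsilon}{\alpha}\eta^2(\boldsymbol{u}_h,\boldsymbol{p}_h;\mathcal{T}_h)$ into the left-hand side. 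Your explicit remarks on merging the curl term into the energy contribution and on tracking the $\alpha$-dependence are, if anything, more careful than the paper's own presentation.
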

	
	\begin{proof}
		Using the Lemmas \ref{Lem:add1},  \ref{Lem:3.8} and \ref{Lem:3.9}, we have
		\begin{eqnarray*}
			\eta^2( \boldsymbol{u}_{h}, \boldsymbol{p}_{h}; \mathcal{T}_{h})
			&\leq&
			(1+\epsilon) \bigg(\eta^2( \boldsymbol{u}_H, \boldsymbol{p}_H; \mathcal{T}_H ) -\lambda \eta^2( \boldsymbol{u}_H, \boldsymbol{p}_H;\mathcal{R}_{\mathcal{T}_H\rightarrow \mathcal{T}_{h}})\bigg)
			\\
			&&+C_\epsilon \|(\boldsymbol{u}_{h}, \boldsymbol{p}_{h})-(\boldsymbol{u}_{H}, \boldsymbol{p}_{H})\|^2_{DG}\\
			&\leq&   (1+\epsilon)  \bigg(\eta^2( \boldsymbol{u}_H, \boldsymbol{p}_H; \mathcal{T}_H ) -\lambda \eta^2( \boldsymbol{u}_H, \boldsymbol{p}_H;\mathcal{R}_{\mathcal{T}_H\rightarrow \mathcal{T}_{h}})\bigg)
			\\
			&&+ C_\epsilon  \|\vert\boldsymbol{u}_{h}-\boldsymbol{u}_{H}\vert\|_h^2+ \|\boldsymbol{p}_h-\boldsymbol{p}_H\|^2_{L^2(\Omega)}\\
			&\leq&   (1+\epsilon)  \bigg(\eta^2( \boldsymbol{u}_H, \boldsymbol{p}_H; \mathcal{T}_H ) -\lambda \eta^2( \boldsymbol{u}_H, \boldsymbol{p}_H;\mathcal{R}_{\mathcal{T}_H\rightarrow \mathcal{T}_{h}})\bigg)\\
			&&+  C_\epsilon  \|\vert\boldsymbol{u}_{h}-\boldsymbol{u}_{H}\vert\|_h^2
			+ \frac{C_\epsilon}{\alpha} \bigg(\eta^2( \boldsymbol{u}_{h}, \boldsymbol{p}_{h}; \mathcal{T}_{h} ) +\eta^2( \boldsymbol{u}_H, \boldsymbol{p}_H; \mathcal{T}_H )\bigg),
		\end{eqnarray*}
		which completes the proof.	
	\end{proof}
	
	\subsection{Convergence result}
	
	Now, we proved that the sum of the norm of the error and the scaled error indicator is attenuated.		
	\begin{theorem}\label{Thm:3.3}
		For a given $\theta\in (0,1)$,let $\{  \mathcal{T}_k , \boldsymbol{U}_k,\boldsymbol{Q}_k, \boldsymbol{u}_{k}, \boldsymbol{p}_k, \eta( \boldsymbol{u}_{k}, \boldsymbol{p}_{k}; \mathcal{T}_{k} ) \}_{k\geq 0}$ be the sequence of meshes, Mixed discrete solution (defined by \eqref{Equ:3.16}-\eqref{Equ:3.17}), and the estimate indicator produced by the  \textbf{AMIPDG} algorithm. Then there exist constants $\rho>0$,  $\delta \in(0, 1)$, which depend on marking parameter $\theta$ and the shape regularity of the initial mesh $\mathcal{T}_0$, such that
		\begin{equation*}
		\|\vert\boldsymbol{u}-\boldsymbol{u}_{k+1} \vert\|_{k+1}^2  + \rho   \eta^2( \boldsymbol{u}_{k+1}, \boldsymbol{p}_{k+1}; \mathcal{T}_{k+1} ) \leq  \delta\bigg(\|\vert  \boldsymbol{u}-\boldsymbol{u}_k\vert\|_k^2+ \rho \eta^2( \boldsymbol{u}_k, \boldsymbol{p}_k; \mathcal{T}_k )\bigg).
		\end{equation*}
	\end{theorem}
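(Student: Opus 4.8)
The plan is to run the standard contraction machinery of Cascon--Kreuzer--Nochetto--Siebert / Bonito--Nochetto type, specialized to two consecutive adaptive levels $\mathcal{T}_H=\mathcal{T}_k$ and $\mathcal{T}_h=\mathcal{T}_{k+1}$, and to close the recursion with Dörfler marking together with the reliability bound of Theorem \ref{Thm:1}. Writing the shorthands $e_k^2:=\|\vert\boldsymbol{u}-\boldsymbol{u}_k\vert\|_k^2$, $E_k^2:=\eta^2(\boldsymbol{u}_k,\boldsymbol{p}_k;\mathcal{T}_k)$ and $d_k^2:=\|\vert\boldsymbol{u}_{k+1}-\boldsymbol{u}_k\vert\|_{k+1}^2$, Lemma \ref{lem:tsm} reads $e_{k+1}^2\le(1+\delta_1)e_k^2-\tfrac{1-\delta_2}{2}d_k^2+\tfrac{C_3}{\delta_1\delta_2\alpha}(E_{k+1}^2+E_k^2)$, while Lemma \ref{Lem:3.11} controls $E_{k+1}^2$ in terms of $E_k^2$, the refined-set indicator $\eta^2(\boldsymbol{u}_k,\boldsymbol{p}_k;\mathcal{R}_{\mathcal{T}_k\to\mathcal{T}_{k+1}})$, and $d_k^2$. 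These two inequalities involve the same difference quantity $d_k^2$ with opposite signs, which is exactly what allows a weighted sum to eliminate it.

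First I would invoke the marking rule. Because the \textbf{REFINE} step bisects every marked element, $\mathcal{M}_k\subseteq\mathcal{R}_{\mathcal{T}_k\to\mathcal{T}_{k+1}}$, so additivity and nonnegativity of the indicator together with the Dörfler property \eqref{Equ:4.1} give $\eta^2(\boldsymbol{u}_k,\boldsymbol{p}_k;\mathcal{R}_{\mathcal{T}_k\to\mathcal{T}_{k+1}})\ge\theta E_k^2$. Substituting this into Lemma \ref{Lem:3.11} turns the estimator reduction into a relation purely among $E_{k+1}^2$, $E_k^2$ and $d_k^2$, in which the coefficient of $E_k^2$ becomes $(1+\epsilon+\tfrac{C_\epsilon}{\alpha})-(1+\epsilon)\lambda\theta$; the genuinely negative term $-(1+\epsilon)\lambda\theta E_k^2$ is the engine of the contraction.

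Next I would add $\rho$ times this estimator inequality to the error inequality, move the $\tfrac{C_3}{\delta_1\delta_2\alpha}E_{k+1}^2$ term to the left, and choose $\rho$ small enough that the net coefficient $\rho C_\epsilon-\tfrac{1-\delta_2}{2}$ of $d_k^2$ is nonpositive and can be discarded; simultaneously $\alpha$ is taken large (consistent with the penalty parameter needed for well-posedness) so that the effective left-hand weight $\rho^\ast:=\rho(1-\tfrac{C_\epsilon}{\alpha})-\tfrac{C_3}{\delta_1\delta_2\alpha}$ of $E_{k+1}^2$ stays positive. This produces $e_{k+1}^2+\rho^\ast E_{k+1}^2\le(1+\delta_1)e_k^2+B\,E_k^2$ with $B:=\tfrac{C_3}{\delta_1\delta_2\alpha}+\rho((1+\epsilon)(1-\lambda\theta)+\tfrac{C_\epsilon}{\alpha})$; the $\rho$ appearing in the statement is precisely this effective weight $\rho^\ast$.

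The remaining, and genuinely delicate, step is that the factor $1+\delta_1>1$ in front of $e_k^2$ rules out a direct contraction of the error alone — this is the price of having no exact Galerkin orthogonality for the nonconforming DG scheme, and I expect it to be the main obstacle. Here I would use reliability: by Theorem \ref{Thm:1} and the definition \eqref{Equ:3.25} of the DG-norm, $e_k^2\le\|(\boldsymbol{u},\boldsymbol{p})-(\boldsymbol{u}_k,\boldsymbol{p}_k)\|_{DG}^2\le C_1E_k^2$, so I split $(1+\delta_1)e_k^2=\delta e_k^2+(1+\delta_1-\delta)e_k^2\le\delta e_k^2+(1+\delta_1-\delta)C_1E_k^2$ and charge the excess to the estimator budget. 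The desired contraction $e_{k+1}^2+\rho^\ast E_{k+1}^2\le\delta(e_k^2+\rho^\ast E_k^2)$ then reduces to the single scalar inequality $(1+\delta_1-\delta)C_1+B\le\delta\rho^\ast$, whose solvability hinges on the lower bound it imposes on $\rho$ being compatible with the upper bound $\rho\le\tfrac{1-\delta_2}{2C_\epsilon}$ coming from the $d_k^2$ absorption. Choosing $\epsilon$ small so that $(1+\epsilon)(1-\lambda\theta)<1$, fixing $\delta\in((1+\epsilon)(1-\lambda\theta),\,1)$ close to $1$ (with $\delta_1$ small and $\alpha$ large, so $\rho^\ast\to\rho$ and $B\to\rho(1+\epsilon)(1-\lambda\theta)$), the excess $(1+\delta_1-\delta)C_1$ shrinks and the required lower bound on $\rho$ drops below the fixed positive upper bound, so the admissible range for $\rho$ is nonempty; any such $\rho>0$ yields the claimed $\delta\in(0,1)$. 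Geometric decay of $e_k^2+\rho E_k^2$ then forces $\eta(\boldsymbol{u}_k,\boldsymbol{p}_k;\mathcal{T}_k)\to0$, so the algorithm meets any tolerance in finitely many steps.
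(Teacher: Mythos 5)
Your proposal is correct and follows essentially the same route as the paper: combine Lemma \ref{lem:tsm} with $\rho$ times Lemma \ref{Lem:3.11} so that the $\|\vert\boldsymbol{u}_{k+1}-\boldsymbol{u}_k\vert\|^2$ terms cancel, apply the D\"orfler property to the refined set, and use the reliability bound of Theorem \ref{Thm:1} to trade between the error and the estimator before tuning $\epsilon$, $\delta_1$, $\delta_2$ and $\alpha$. The only cosmetic difference is that you charge the excess $(1+\delta_1-\delta)\|\vert\boldsymbol{u}-\boldsymbol{u}_k\vert\|_k^2$ to the estimator via $\|\vert\boldsymbol{u}-\boldsymbol{u}_k\vert\|_k^2\le C_1\eta^2(\boldsymbol{u}_k,\boldsymbol{p}_k;\mathcal{T}_k)$, whereas the paper converts half of the negative estimator decrement into a negative multiple of the error; these are equivalent uses of the same reliability estimate.
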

	\begin{proof}
		Setting $ \widetilde{\rho}=\frac{1-\delta_2 }{2C_{\epsilon}} $, then multiply the both sides of the \eqref{Equ:3.47} inequality by $\widetilde{\rho}$, we get
		\begin{eqnarray} \nonumber
		\lefteqn{
			\widetilde{\rho}  (1-\frac{C_\epsilon}{\alpha}  )  \eta^2( \boldsymbol{u}_{k+1}, \boldsymbol{p}_{k+1}; \mathcal{T}_{k+1} ) }\\ \nonumber
		&&\leq   \widetilde{\rho}( 1+\epsilon+  \frac{C_\epsilon}{\alpha}  )  \eta^2( \boldsymbol{u}_k, \boldsymbol{p}_k; \mathcal{T}_k ) -\widetilde{\rho}(1+\epsilon)\lambda \eta^2( \boldsymbol{u}_k, \boldsymbol{p}_k;\mathcal{R}_{\mathcal{T}_k\rightarrow \mathcal{T}_{k+1}}) \\\label{Equ:4.15}
		&& \quad +\frac{1-\delta_2}{2}   \|\vert\boldsymbol{u}_{k+1}-\boldsymbol{u}_{k}\vert\|^2_h.
		\end{eqnarray}
		Next, by the  \eqref{Equ:3.30.1} and \eqref{Equ:4.15}, we have
		\begin{eqnarray}\nonumber
		\lefteqn{
			\|\vert\boldsymbol{u}-\boldsymbol{u}_{k+1} \vert\|_{k+1}^2  + \widetilde{\rho}  (1-\frac{C_\epsilon}{\alpha}  ) \eta^2( \boldsymbol{u}_{k+1}, \boldsymbol{p}_{k+1}; \mathcal{T}_{k+1} )}\\\nonumber
		&& \leq ( 1+\delta_1) \|\vert  \boldsymbol{u}-\boldsymbol{u}_k\vert\|_{k}^2 +  \frac{C_3}{ \delta_1\delta_2 \alpha} \bigg(\eta^2( \boldsymbol{v}_{k+1}, \boldsymbol{q}_{k+1};\mathcal{T}_{k+1})+ \eta^2( \boldsymbol{v}_{k}, \boldsymbol{q}_{k};\mathcal{T}_{k})\bigg)\\\label{Equ:3.50}
		&& \quad + \widetilde{\rho}( 1+\epsilon+  \frac{C_\epsilon}{\alpha}  )  \eta^2( \boldsymbol{u}_k, \boldsymbol{p}_k; \mathcal{T}_k ) -\widetilde{\rho}(1+\epsilon)\lambda \eta^2( \boldsymbol{u}_k, \boldsymbol{p}_k;\mathcal{R}_{\mathcal{T}_k\rightarrow \mathcal{T}_{k+1}}) .
		\end{eqnarray}
		First move the term and then according to  D\"orfler marking strategy \eqref{Equ:4.1}, the Theorem \ref{Thm:1} and $\|\vert\cdot\vert\|_h \leq \|\cdot\|_{DG}$, we know $  -\eta^2( \boldsymbol{v}_k, \boldsymbol{q}_k;\mathcal{R}_{\mathcal{T}_k\rightarrow \mathcal{T}_{k+1}}) \leq  -\theta \eta^2( \boldsymbol{v}_k, \boldsymbol{q}_k;\mathcal{T}_k) $, then
		\begin{eqnarray*}\nonumber
			\|\vert\boldsymbol{u}-\boldsymbol{u}_{k+1} \vert\|_{k+1}^2  &+& \widetilde{\rho}  (1-\frac{C_\epsilon}{\alpha} -  \frac{C_3}{  \widetilde{\rho}\delta_1\delta_2 \alpha}  ) \eta^2( \boldsymbol{u}_{k+1}, \boldsymbol{p}_{k+1}; \mathcal{T}_{k+1} ) \\\nonumber
			&\leq& ( 1+\delta_1) \|\vert  \boldsymbol{u}-\boldsymbol{u}_k\vert\|_k^2 - \frac{\widetilde{\rho}(1+\epsilon)\lambda \theta }{2} \eta^2( \boldsymbol{u}_k, \boldsymbol{p}_k; \mathcal{T}_k )\\
			&& \quad + \widetilde{\rho} \bigg( 1+\epsilon+  \frac{C_\epsilon}{\alpha} +\frac{C_3}{  \widetilde{\rho}\delta_1\delta_2 \alpha} - \frac{(1+\epsilon)\lambda \theta }{2} \bigg)  \eta^2( \boldsymbol{u}_k, \boldsymbol{p}_k; \mathcal{T}_k )  \\
			&\leq& ( 1+\delta_1- \frac{\widetilde{\rho}(1+\epsilon)\lambda \theta C_1^{-1}}{2} )\|\vert  \boldsymbol{u}-\boldsymbol{u}_k\vert\|_k^2   \\
			&& \quad + \widetilde{\rho} \bigg( 1+\epsilon+  \frac{C_\epsilon}{\alpha} +\frac{C_3}{  \widetilde{\rho}\delta_1\delta_2 \alpha} - \frac{(1+\epsilon)\lambda \theta }{2} \bigg)  \eta^2( \boldsymbol{u}_k, \boldsymbol{p}_k; \mathcal{T}_k ).
		\end{eqnarray*}
		
		For convenience, denote
		\begin{eqnarray*}
			\beta_1 &=&1-\frac{C_\epsilon}{\alpha} -  \frac{C_3}{  \widetilde{\rho}\delta_1\delta_2 \alpha} , \\
			\beta_2 &=&   1+\delta_1- \frac{\widetilde{\rho}(1+\epsilon)\lambda \theta C_1^{-1} }{2} , \\
			\beta_3&=& (1+\epsilon)(1 -\frac{\lambda \theta }{2})  +\frac{C_\epsilon}{\alpha} +\frac{C_3}{  \widetilde{\rho}\delta_1\delta_2 \alpha} .
		\end{eqnarray*}
		Thus
		\begin{equation*}
		\|\vert\boldsymbol{u}-\boldsymbol{u}_{k+1} \vert\|_{k+1}^2  + \widetilde{\rho}  \beta_1  \eta^2( \boldsymbol{u}_{k+1}, \boldsymbol{p}_{k+1}; \mathcal{T}_{k+1} ) \leq  \beta_2 \|\vert  \boldsymbol{u}-\boldsymbol{u}_k\vert\|_k^2+ \widetilde{\rho}\beta_3 \eta^2( \boldsymbol{u}_k, \boldsymbol{p}_k; \mathcal{T}_k ).
		\end{equation*}
		
		Next, we firstly choose $ \delta_1=\frac{\widetilde{\rho}(1+\epsilon)\lambda \theta C_1^{-1} }{4}$, then select the appropriate $\delta_2$ to make $\widetilde{\rho}=\frac{1-\delta_2 }{2C_{\epsilon}} $ smaller to ensure $0< \delta_1<1$,
		Secondly, we let  $\epsilon>0$ and $ (1+\epsilon)(1 -\frac{\lambda \theta }{2})=1-\frac{\lambda \theta }{4}$ ( $\lambda \theta\in(0,1) $), therefore
		\begin{eqnarray*}
			\beta_2=1- \delta_1 \in(0,1),
			\
			(1+\epsilon)(1 -\frac{\lambda \theta }{2})<1.
		\end{eqnarray*}
		
		Furthermore, we choose a sufficiently large penalty parameter $\alpha$ such that
		$$\beta_1 >\beta_3.$$
		
		Finally, there is a constant $\delta=\max\{\beta_2,\frac{\beta_1}{\beta_3} \} $. Then, we let $\rho=  \widetilde{\rho}  \beta_1$, and obtain
		\begin{equation*}
		\|\vert\boldsymbol{u}-\boldsymbol{u}_{k+1} \vert\|_{k+1}^2  + \rho   \eta^2( \boldsymbol{u}_{k+1}, \boldsymbol{p}_{k+1}; \mathcal{T}_{k+1} ) \leq  \delta\bigg(\|\vert  \boldsymbol{u}-\boldsymbol{u}_k\vert\|_k^2+ \rho \eta^2( \boldsymbol{u}_k, \boldsymbol{p}_k; \mathcal{T}_k )\bigg).
		\end{equation*}			
	\end{proof}
	
	\begin{corollary}
		Under the conditions of Theorem \ref{Thm:3.3}, we have
		\begin{eqnarray*}
			\|(\boldsymbol{u}, \boldsymbol{p})-(\boldsymbol{u}_{k}, \boldsymbol{p}_{k})\|^2_{DG}  + \rho   \eta^2( \boldsymbol{u}_{k}, \boldsymbol{p}_{k}; \mathcal{T}_{k} ) \leq  \delta^k\widetilde{C}_\delta.
		\end{eqnarray*}
		where $ \widetilde{C}_\delta = C \bigg( \|(\boldsymbol{u}, \boldsymbol{p})-(\boldsymbol{u}_{0}, \boldsymbol{p}_{0})\|^2_{DG}  + \rho   \eta^2( \boldsymbol{u}_{0}, \boldsymbol{p}_{0}; \mathcal{T}_{0} ) \bigg) $.
		Therefore, for a given precision, the \textbf{AMIPDG} method will terminate after a finite number of operations.
	\end{corollary}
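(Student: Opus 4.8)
The plan is to iterate the one-step contraction of Theorem~\ref{Thm:3.3} and then convert the reduced energy quantity into the full DG quantity by means of the equivalence recorded in Remark~\ref{Rem:4}. First I would apply Theorem~\ref{Thm:3.3} repeatedly. Writing $e_k := \|\vert\boldsymbol{u}-\boldsymbol{u}_k\vert\|_k^2 + \rho\,\eta^2(\boldsymbol{u}_k,\boldsymbol{p}_k;\mathcal{T}_k)$, the contraction reads $e_{k+1}\le\delta\,e_k$, so a trivial induction on $k$ yields $e_k\le\delta^k e_0$, that is
\begin{equation*}
\|\vert\boldsymbol{u}-\boldsymbol{u}_k\vert\|_k^2 + \rho\,\eta^2(\boldsymbol{u}_k,\boldsymbol{p}_k;\mathcal{T}_k) \le \delta^k\bigl(\|\vert\boldsymbol{u}-\boldsymbol{u}_0\vert\|_0^2 + \rho\,\eta^2(\boldsymbol{u}_0,\boldsymbol{p}_0;\mathcal{T}_0)\bigr).
\end{equation*}

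Next I would upgrade both sides using the two-sided equivalence of Remark~\ref{Rem:4}. On the left-hand side, at level $k$, the forward part of the remark gives $\|(\boldsymbol{u},\boldsymbol{p})-(\boldsymbol{u}_k,\boldsymbol{p}_k)\|^2_{DG}+\eta^2(\boldsymbol{u}_k,\boldsymbol{p}_k;\mathcal{T}_k)\lesssim \|\vert\boldsymbol{u}-\boldsymbol{u}_k\vert\|_k^2+\eta^2(\boldsymbol{u}_k,\boldsymbol{p}_k;\mathcal{T}_k)$; since $\rho$ is a fixed positive constant, the reweighting is absorbed into the generic constant $C$, so that $\|(\boldsymbol{u},\boldsymbol{p})-(\boldsymbol{u}_k,\boldsymbol{p}_k)\|^2_{DG}+\rho\,\eta^2(\boldsymbol{u}_k,\boldsymbol{p}_k;\mathcal{T}_k)\le C\bigl(\|\vert\boldsymbol{u}-\boldsymbol{u}_k\vert\|_k^2+\rho\,\eta^2(\boldsymbol{u}_k,\boldsymbol{p}_k;\mathcal{T}_k)\bigr)$. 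On the right-hand side, at level $0$, the reverse part of Remark~\ref{Rem:4}, namely $\|\vert\boldsymbol{u}-\boldsymbol{u}_0\vert\|_0^2+\eta^2(\boldsymbol{u}_0,\boldsymbol{p}_0;\mathcal{T}_0)\le\|(\boldsymbol{u},\boldsymbol{p})-(\boldsymbol{u}_0,\boldsymbol{p}_0)\|^2_{DG}+\eta^2(\boldsymbol{u}_0,\boldsymbol{p}_0;\mathcal{T}_0)$, controls the initial quantity by its DG counterpart. Chaining these two bounds through the geometric decay of the first step gives
\begin{equation*}
\|(\boldsymbol{u},\boldsymbol{p})-(\boldsymbol{u}_k,\boldsymbol{p}_k)\|^2_{DG}+\rho\,\eta^2(\boldsymbol{u}_k,\boldsymbol{p}_k;\mathcal{T}_k)\le C\delta^k\bigl(\|(\boldsymbol{u},\boldsymbol{p})-(\boldsymbol{u}_0,\boldsymbol{p}_0)\|^2_{DG}+\rho\,\eta^2(\boldsymbol{u}_0,\boldsymbol{p}_0;\mathcal{T}_0)\bigr),
\end{equation*}
which is precisely the claimed estimate with $\widetilde{C}_\delta = C\bigl(\|(\boldsymbol{u},\boldsymbol{p})-(\boldsymbol{u}_0,\boldsymbol{p}_0)\|^2_{DG}+\rho\,\eta^2(\boldsymbol{u}_0,\boldsymbol{p}_0;\mathcal{T}_0)\bigr)$.

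For the termination assertion, since $\delta\in(0,1)$ and $\widetilde{C}_\delta$ is a fixed finite constant, the right-hand side $\delta^k\widetilde{C}_\delta$ tends to $0$ as $k\to\infty$. In particular $\rho\,\eta^2(\boldsymbol{u}_k,\boldsymbol{p}_k;\mathcal{T}_k)\le\delta^k\widetilde{C}_\delta$ forces $\eta(\boldsymbol{u}_k,\boldsymbol{p}_k;\mathcal{T}_k)\to 0$, so for any prescribed tolerance $\mathrm{tol}$ the stopping criterion $\eta<\mathrm{tol}$ of Algorithm~\ref{ALG1} is met after finitely many cycles, which is exactly the \textbf{AMIPDG} termination claim.

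I do not expect a genuine obstacle here: the argument is the standard telescope-the-contraction-then-trade-norms pattern. The only point requiring care is the consistent treatment of the scaling factor $\rho$ when invoking Remark~\ref{Rem:4}, whose equivalence is stated for the unweighted sum of the energy term and the estimator; this is handled by absorbing the fixed constant $\rho$ into $C$, legitimate because both directions of the equivalence are uniform in $k$ by virtue of the shape regularity of the initial mesh $\mathcal{T}_0$.
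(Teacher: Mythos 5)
Your argument is correct and is essentially the paper's own proof: iterate the contraction of Theorem \ref{Thm:3.3} to obtain geometric decay of the energy quantity, then use the two-sided equivalence of Remark \ref{Rem:4} to pass to the DG norm on the left and back to the DG norm of the initial error on the right. Your additional care about absorbing the fixed weight $\rho$ into the generic constant and about uniformity of the equivalence constants in $k$ only makes explicit what the paper leaves implicit.
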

	
	\begin{proof}
		Using the Remark \ref{Rem:4} and Theorem \ref{Thm:3.3}, we have
		\begin{eqnarray*}
			\|(\boldsymbol{u}, \boldsymbol{p})-(\boldsymbol{u}_{k}, \boldsymbol{p}_{k})\|^2_{DG} + \rho   \eta^2( \boldsymbol{u}_{k}, \boldsymbol{p}_{k}; \mathcal{T}_{k} )
			&\leq&
			C \bigg(\|\vert  \boldsymbol{u}-\boldsymbol{u}_k\vert\|_k^2+ \rho \eta^2( \boldsymbol{u}_k, \boldsymbol{p}_k; \mathcal{T}_k )\bigg)\\
			&\leq&
			\delta^k\widetilde{C}_\delta.
		\end{eqnarray*}	
	\end{proof}

	\section{Numerical experiments}
	In this section, we test some numerical experiments to show the efficiency and the robustness of AMIPDG.
	We carry out these numerical experiments by using the MATLAB software package iFEM \cite{ChenLiFEM}. In Experiments \ref{Exa:ex1} and \ref{Exa:ex2}, we take $\boldsymbol{p}=\nabla\times\boldsymbol{u}$.
	
	In Example \ref{Exa:ex1}, we discuss the influence of the penalty parameter $\alpha$ on the error in  $\|\cdot\|_{DG}$ norm, and observe the dependency of the condition number of stiffness matrix on $\alpha$.
	\begin{example}\label{Exa:ex1}
		Let $\Omega:=[0,1] \times[0,1] \times [0,1]$, we construct the following analytical solution of the model \eqref{Equ:1.1}-\eqref{Equ:1.2}:
		$$
		\boldsymbol{u}=\left(\begin{array}{c}
		x(x-1)y(y-1)z(z-1) \\
		\sin (\pi x) \sin (\pi y) \sin (\pi z) \\
		(1-e^x)(1-e^{x-1})(1-e^{y})(1-e^{y-1})(1-e^{z})(1-e^{z-1})	
		\end{array}\right).
		$$
		It is easy to see that the solution $\boldsymbol{u}$ satisfies the boundary condition $\boldsymbol{u} \times \boldsymbol{n}=0$ on $\partial \Omega$.
	\end{example}
	
	In this example, we get a uniform mesh by partitioning the $x-$, $y-$ and $z-$axes into equally distributed $M(M\geq 2)$ subintervals,  and then dividing one cube into six tetrahedrons. Let $h=1/M$ be mesh sizes for different tetrahedrons meshes.
	We fixed mesh with $h=1/4$ and report the error estimates in  $\|\cdot\|_{DG}$  norm and condition number of stiffness matrices for different penalty parameters $ \alpha = 1, 10, 100, 500$ and $1000$ in Table \ref{Tab_111}. We note that $\left\|\boldsymbol{u}-\boldsymbol{u}_{h}\right\|_{0}$ increases at first and then decreases as the penalty parameter $\alpha$ increases. The condition numbers of stiffness matrices increase with the increase of penalty parameters $\alpha$.
	
	\begin{table}[ht]
		\centering\caption{The error in  $\|\cdot\|_{DG}$ norms and condition number of stiffness matrices with $h=1/4$.}\label{Tab_111}
		\begin{tabular}{ccccccc}
			\hline
			$\alpha$	& 1  & 10    & 100 & 500 & 1000 \\
			\hline
			$\|\left(\boldsymbol{p}-\boldsymbol{p}_{h}, \boldsymbol{u}-\boldsymbol{u}_{h}\right)\|_{\mathrm{DG}}$ & 3.949e+00 & 1.133e-00 &  8.614e-01 & 8.649e-01  & 8.659e-01  \\
			Cond &3.235e+04 & 7.021e+04 & 5.959e+05 & 2.995e+06 & 6.150e+06\\
			\hline
		\end{tabular}
	\end{table}
	
	As a way to balance, in the following numerical tests, we always choose $\alpha=100$.
	
	Noting that we only consider uniform meshes in Example \ref{Exa:ex1}. Next we test adaptive meshes.
	
	\begin{example}\label{Exa:ex2}
		Let $\Omega:=[0,1] \times[0,1] \times [0, 1]$, we construct the following analytical solution of the model \eqref{Equ:1.1}-\eqref{Equ:1.2}
		$$
		\boldsymbol{u}=\left(\begin{array}{c}
		\frac{x(x-1)y(y-1)z(z-1)}{x^2+y^2+z^2+0.001} \\
		\frac{x(x-1)y(y-1)z(z-1)}{x^2+y^2+z^2+0.001} \\
		-\frac{x(x-1)y(y-1)z(z-1)}{x^2+y^2+z^2+0.001}
		\end{array}\right).
		$$
		Note that the solution $\boldsymbol{u}$ satisfies the condition $ \boldsymbol{u}\times\boldsymbol{n}=0$ on $\partial \Omega$.
	\end{example}
	
	The right of Figure \ref{fig:ex2_mesh_figure} shows an adaptively refined mesh with marking parameter- $\theta=0.7$ after $k=18$. The grid is locally refined near the origin.
	\begin{figure}[ht]
		\begin{minipage}[t]{0.45\linewidth}
			\centering
			\includegraphics[height=4cm,width=5cm]{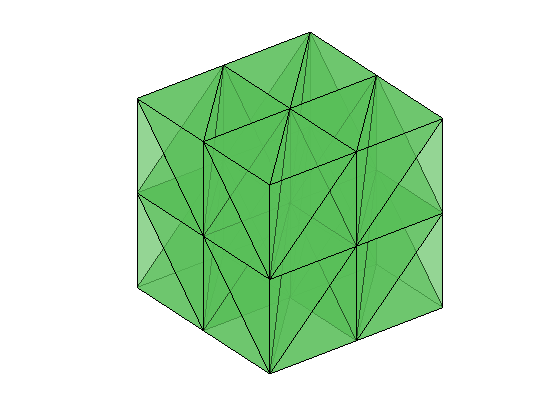}
		\end{minipage}
		\begin{minipage}[t]{0.45\linewidth}
			\centering
			\includegraphics[height=4cm,width=5cm]{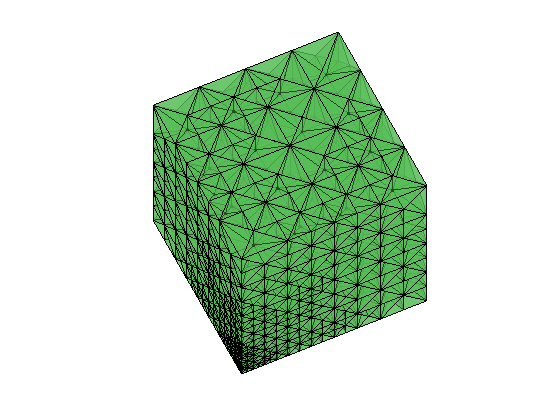}
		\end{minipage}
		\caption{Left: the initial mesh with 1152 DoFs. Right: the adaptive mesh($\theta=0.7$) with 181104 DoFs after 18 refinements.}\label{fig:ex2_mesh_figure}
	\end{figure}
	
	The Figure \ref{fig:ex2_res} shows the curves of $\log N -\log\eta\left(\boldsymbol{u}_{k}, \boldsymbol{p}_{k} ; \mathcal{T}_{k}\right)$ for parameters $\theta=0.3,0.5,0.7$. The curves indicate the convergence and the quasi-optimality of the adaptive algorithm AMIPDG of  $\eta\left(\boldsymbol{u}_{k}, \boldsymbol{p}_{k} ; \mathcal{T}_{k}\right)$.
	\begin{figure}[ht]
		\centering
		\includegraphics[height=8cm,width=12cm]{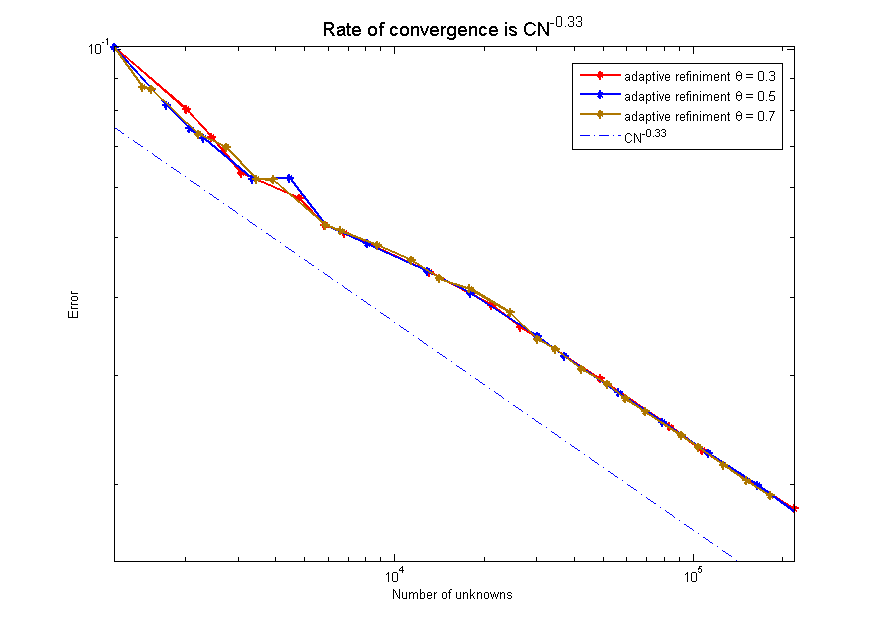}
		\caption{Quasi optimality of the AMIPDG of the error $\eta\left(\boldsymbol{u}_{k}, \boldsymbol{p}_{k} ; \mathcal{T}_{k}\right)$ with different marking parameters $\theta$.}\label{fig:ex2_res}
	\end{figure}
	
	\section*{Acknowledgment}
	The first author is supported by the East China University of Technology (DHBK2019209) and Jiangxi Province Education Department (GJJ200755).
	The second,  third and fourth authors are supported by the National Natural Science Foundation of China (Grant No. 12071160).   The third author is  also supported by the National Natural Science Foundation of China (Grant No. 11901212).


\end{document}